\newcommand{\ve}{\varepsilon}
\newcommand\cL{{\mathcal L}}
\newcommand\bP{{\mathbb P}}
\newcommand\cC{{\mathcal C}}
\newcommand\cP{{\mathcal P}}
\newcommand\bE{{\mathbb E}}
\newcommand\cF{{\mathcal F}}
\newcommand\bR{{\mathbb R}}
\newcommand{\LL}{{\mathrm{LL}}}
\newcommand{\Lip}{{\mathrm{Lip}}}
\newtheoremstyle{special}%
{}%
{}%
{}%
{}%
{\scshape}%
{.}%
{.5em}%
{}
\newtheorem{theorem}{Theorem}
\newtheorem{lemma}[theorem]{Lemma}
\newtheorem{corollary}[theorem]{Corollary}
\newtheorem{proposition}[theorem]{Proposition}
\newtheorem{remark}[theorem]{Remark}
\newtheorem{definition}[theorem]{Definition}
\newtheorem{assumption}[theorem]{Assumption}
\def\Id{\text{\rm Id}}
\DeclareMathOperator{\esssup}{esssup}
\DeclareMathOperator{\essinf}{essinf}
\begin{document}
	
	\title{Decay of correlations and limit theorems for random intermittent maps }
\author{Davor Dragi\v cevi\' c
\thanks{Faculty of Mathematics, University of Rijeka, Radmile Matej\v ci\' c 2, 51000 Rijeka, Croatia.\\ Email: ddragicevic@math.uniri.hr}
\and
Yeor Hafouta \thanks{Department of Mathematics, University of Florida, USA \\ Email: 
yeor.hafuta@ufl.edu}
\and
Juho Lepp{\"a}nen \thanks{Department of Mathematics, Tokai University, Kanagawa 259-1292, Japan\\ Email: leppanen.juho.heikki.g@tokai.ac.jp}
}

	\maketitle

	%    \vspace{3mm}
	
	%\noindent {\it Keywords}: linear response; random dynamical systems; intermittent maps; variance.
	%\vspace{2mm}
	
	%\noindent{\it 2020 MSC}: 37D25, 37H05

    \begin{abstract}
     In this paper, we revisit the problem of polynomial memory loss and the central limit theorem for time-dependent LSV maps. More precisely, we show that for random LSV maps corresponding to a random parameter $\beta(\cdot)$ we obtain quenched memory loss,  decay of correlations, central limit theorems with rates, moment bounds and almost sure invariance principles (ASIP) when the essential infimum of $\beta(\cdot)$ is less than $1/5$
and the driving process (i.e. random environment) is mixing sufficiently fast. In \cite[Corollary 3.8]{Su2} the ASIP was obtained for ergodic driving systems when the essential supremum of $\beta$ is less than $1/2$. As will be elaborated in Section \ref{Intro}, restrictions on the essential infimum are more natural in our context.  Our results have an abstract form which we believe could be useful in other circumstances, as will be elaborated in a future work. 
    \end{abstract}
    \section{Introduction}\label{Intro} 
An important discovery made in the last century is that autonomous expanding (or hyperbolic) dynamical systems could exhibit stochastic behavior. One of the most celebrated results in this direction is the fact that appropriately normalized Birkhoff sums could satisfy the central limit theorem (CLT). Since then many other probabilistic limit theorems have been obtained for autonomous systems, and we refer to \cite{GH,GO, HH} for spectral approaches that yield a variety of limit theorems for autonomous (partially) hyperbolic dynamical systems.   
However, many systems appearing in nature are non-autonomous  due to an interaction with the outside world. 
Such systems can be better described by compositions of
different maps, i.e. time-dependent transformations, rather than by repeated application of exactly the same transformation, 
%$T_j$ 
so that the $j$-th iterate of the system is given by 
$
T_{j-1}\circ\ldots\circ  T_{1}\circ T_0.
$
Yet, many powerful tools developed for studying autonomous systems are unavailable in non-autonomous setting (e.g. spectral theory of quasi-compact operators, see \cite{HH}), so very often new ideas
are needed to handle the non-stationary case.
%One notable example is  the, so-called, Nagaev-Guivarch method (\cite{GH,Nag1}), which is based on a spectral gap for an appropriate Markov or transfer operator. This method has been applied for many different types of stationary systems, and we refer to \cite{HH} for the general framework. 

%We also refer  to the introduction of \cite[Chapter 5]{KiferLiu} for a historical discussion on that matter.}.

One notable example of non-autonomous systems are random dynamical systems.
Random transformations emerge in a natural way as a model
for the description of a physical system whose evolution mechanism depends on
time in a stationary way. This leads to the study of the actions of compositions
of different maps $T_j$ chosen at random from a typical sequence of transformations. To fix the notation, this means that there is an underlying probability preserving system
$(\Omega,\mathcal F,\mathbb P,\sigma)$ so that the $n$-th step evolution of the system is given by 
$$
T_\omega^n:=T_{\sigma^{n-1}\omega}\circ\ldots\circ T_{\sigma\omega}\circ T_\omega.
$$
This setup was already discussed by Ulam and von Neumann~\cite{UN} and Kakutani~\cite{Ka} in connection with random ergodic theorems. 
The ergodic theory of random dynamical systems has attracted a lot of attention in the past decades, see, for instance \cite{Arnold98, Cong97, Crauel2002, Kifer86, KiferLiu, LiuQian95}. We refer to the introduction of \cite[Chapter 5]{KiferLiu} for a historical discussion and applications to, for instance, statistical physics, economy, and meteorology.  

%In this paper 

Probabilistic limit theorems (aka statistical properties) of random dynamical systems have attracted a lot of attention in the literature over the past two decades. For example, the decay of the correlations was obtained in \cite{ABR, ABRV, Baldi, Buzzi}. The central limit theorem for iterations of random mappings chosen
independently with identical distribution (iid) hyperbolic transformations was considered in \cite{ANV, ALS09}. In this case the orbits in state space
form a Markov chain (\cite{Kifer86}) and the limit theorems are obtained relying on stationary methods which involve the spectral gap of an appropriate annealed transfer operator. Another approach for iid transformation and for small perturbations of deterministic systems is based on construction of random Young towers \cite{ABR,Baldi,Su1}, which exist only in particular situations, and their implementation seem to heavily rely on independence of the maps or on a perturbative approach. We stress that in the perturbative case the results require exponential tails for the towers, and in that case many limit theorems follow from general results for random Young towers driven by ergodic systems, see \cite{YH YT}. 

 Beyond iterations of random iid maps, limit theorems were mostly obtained for quite general classes of random  expanding transformations  and for some classes of random uniformly hyperbolic maps $T_\omega$, see, for instance \cite{DFGTV, DavorCMP, DavorTAMS, DavH AHP, HK, Nonlin, YH YT, YH Advances, YH cones 2, Kifer 1996, Kifer, Ste20, STE,STSU} and references therein. In fact, in the uniformly hyperbolic case, there are also a few results for compositions of non-random sequences of maps \cite{Bhak,DolgHaf PTRF 2}. In particular, the results in \cite{Bhak} cover certain types of random uniformly hyperbolic maps, and the results in \cite{DolgHaf PTRF 2} cover most of the results for random uniformly expanding maps.

In this paper, we study random expanding intermittent maps $T_\omega$ on the unit interval, that is, we assume that there is one indifferent fixed point (say $0$) such that $T_\omega'(0)=1$ and around $0$ we have $T_\omega'(x)=1+O(x^{\beta(\omega)})$ for some $\beta(\omega)\in(0,1)$. In fact, to simplify the arguments, we will work with the classical LSV model \cite{LSV} for such maps, where $T_\omega=T_{\beta(\omega)}$ for
\[
	T_\beta(x)=\begin{cases}
		x(1+2^\beta x^\beta) & 0\le x<\frac 1 2 \\
		2x-1 & \frac 1 2 \le x\le 1.
	\end{cases}
	\]
    The difficulty here compared with the expanding case when $\inf_{x\in[0,1]}|T_\omega'(x)|>1$ (perhaps even non-uniformly in $\omega$ as in \cite{YH Advances, YH cones 2}) is that the point $0$ attracts a non-negligible part of the unit interval. This has the effect of ``destroying" the exponential mixing and replacing it with a polynomial one. Already for deterministic maps when $T_\omega=T_\beta$ for some fixed $\beta$ this makes the classical Nagaev-Guivarch spectral method \cite{HH} inapplicable. 
    
    Another difficulty that arises is that, already in the deterministic case, the polynomial rates are not achieved in some operator norm, and what we get is what in this context is referred to as ``loss of memory", which means that we can control the mixing rates by means of $L^s$-norms ($s<\infty$) of the iterations of the transfer operator applied to Lipschitz continuous functions. To fix the notation, if we denote by $L_\omega$ the transfer operator of $T_\omega$ with respect to the underlying random equivariant measures $\mu_\omega$, then what we mean is that for $\bP$-a.e. $\omega \in \Omega$, for every Lipschitz $\varphi_1,\varphi_2:[0,1]\to\mathbb R$ 
\begin{align}\label{p control}
			\begin{split}
				& \bigl\Vert  \bigl[  L_{  \sigma^i \omega }^{j-i} \bigl(  L_{ \sigma^r \omega }^{i-r}   ( \varphi_1 )  \varphi_2\bigr)  \bigr]_{ \sigma^j \omega }  \bigr\Vert_{  L^s(  \mu_{ \sigma^j \omega } ) }
				\le C    (1 +  \Vert  \varphi_1 \Vert_{ \Lip } )  (1 +  \Vert  \varphi_2 \Vert_{ \Lip } )
				K( \sigma^i \omega )  (j - i)^{-a},
			\end{split}
    \end{align}
    where $K\in L^p(\Omega,\mathcal F,\mathbb P)$ for some $p$ large enough, $\|g\|_{Lip}$ is the usual  Lipschitz  norm, $L_\omega^j=L_{\sigma^{j-1}\omega}\circ\cdots\circ L_{\omega}$ and $[g]_{\omega}=g-\mu_{\omega}(g)$.
    In the random LSV case, $a$ can be expressed using $p,s$ and the essential infimum of $\beta(\cdot)$, but our abstract results will be obtained under \eqref{p control} with some $a$.
    The fact that one can only control the $L^s$-norms is a serious obstacle from an analytic point of view, even compared to the case $s=\infty$ (which is excluded here). In fact, a large part of the proof for random LSV maps is to show that \eqref{p control} holds. Once this is established the proofs of all the limit theorems follow from \eqref{p control} and martingale methods. We believe that \eqref{p control} is also true for random Young towers (see the arguments in the proof of \cite[Lemma 5.9]{Su1}), but usually such towers extension exist for iid transformations or for close maps, which is against the main point in this paper, where weakly dependent maps are considered and the random parameter $\beta$ is allowed to take values in the entire set of parameters $(0,a]$ for some $a<1$. 

    The almost sure invariance principle (ASIP) is a powerful statistical tool as it allows us to couple the underlying sequence $S_n$ with a Brownian motion in such a way that $|S_n-B_{\|S_n\|_{L^2}}|=O(\|S_n\|_{L^2}^{1-\varepsilon})$ a.s. where $\varepsilon>0$ is some small constant. Clearly, it implies the central limit theorem (CLT), but it also implies the functional CLT, the functional law of iterated logarithm and many other limit theorems (see \cite[Appendix C]{BCS}).
    In \cite{Su} Su developed an approach to prove ASIP in the former circumstances (i.e., when $s<\infty$), relying on the Skorokhod embedding theorem. Note that this is really needed in these circumstances, compared with existing tools. For example, in \cite{CM} the authors developed a method (which is also) based on reverse martingale couboundary representations to get the ASIP. The results in \cite{CM} were applied for a variety of expanding systems, both stationary and time-dependent (random or sequential; see \cite{DFGTV,Nicol,YH Advances}). However, a closer look reveals that the conditions in \cite{CM} do not seem to hold when $s<\infty$. This is where the method of Su comes in handy. We stress that more recently, Su~\cite{Su2} has established ASIP for vector-valued observables and improved the parameter restrictions of his earlier work~\cite{Su}.

    In this paper, we consider random LSV maps and prove polynomial loss of memory (again with every $s<\infty$). Then we adapt the method in \cite{Su} and get the ASIP for different classes of random LSV maps than in \cite{Su,Su2}. More precisely, in the random case Su (see~\cite{Su2}) requires $\beta:=\|\beta(\cdot)\|_{L^\infty}<1/2$ (in~\cite{Su} it was required that $\beta <\frac 1 8$). However, philosophically, in the case of a random dynamical system, a more natural restriction on the random parameter $\beta(\omega)$ should involve upper bounds on the essential infimum of $\beta(\omega)$ and not on its essential supremum. In fact, as $\beta$ decreases, the map $T_\beta$ resembles $T_0$, which is the classical doubling map, so an inducing argument yields the result that a positive proportion of the maps $T_{\sigma^n\omega}$ would be close to $T_0$. 
    We will follow the latter heuristics and prove the ASIP without restrictions on $\beta$ apart from $\beta<1$, and instead we will prove the ASIP when the essential infimum $\gamma$ of $\beta(\omega)$ is less than $1/5$. 
For example, the distribution of $\beta(\omega)$ can be equivalent to the uniform distribution on $(0,a)$ for some $a<1$, or just $\mathbb P(\beta(\omega)=1/5-\varepsilon)>0$ for some $\varepsilon>0$, and many other examples can be given . The ``price" here is that we assume that the random environment $(\Omega,\mathcal F,\mathbb P,\sigma)$ is mixing sufficiently fast in an appropriate sense (and not only ergodic as in~\cite{Su, Su2}). 
%When the Lipschitz norm of the random observable is uniformly bounded and the mixing rate of the random environment is $O(n^{-q})$ for some $q$ large enough, we get the ASIP when $\gamma< 1/5$. 
As a by-product of our methods, we also prove moment bounds, CLT rates, and similar results for the skew product.
   % We stress that the paper of Su \cite{Su} is mostly focused on the case of sequential dynamical systems, while our paper concerns random maps, and we do not improve Su's result in the non-random non-autonomous case\footnote{JL: In the sequential case, https://arxiv.org/pdf/1903.09763 gives a better result}.

   % As noted above, our first step is to prove polynomial loss of memory (and decay of correlations) when starting with Lipschitz observables, which can be viewed as initial density. Compared with the results in \cite{KL,KL2} we can consider general Lipschitz observables as opposed to the restrictions imposed on the observable in \cite{KL,KL2}. Compared with \cite{KL}, we also get moment bounds for more general values of $\beta$. The ASIP and the CLT rates were not discussed in \cite{KL,KL2}, and they seem to be already new in the context of these papers, when restricting the results to random dynamical systems.  

As noted above, our first step is to prove polynomial loss of memory (and decay of correlations) when starting with Lipschitz observables, which can be viewed as initial densities. Compared with the memory loss estimates in \cite[Corollary 3.3]{KL2}, we obtain more information on the behavior of the random multiplicative constants that appear
in the estimates. Specifically, we establish \eqref{p control} with $K\in L^p(\Omega,\mathcal F,\mathbb P)$ provided that, for $b > 0$, the tail probabilities
$$
\bP\biggl(  
| S_n^{ \pm  }(\omega) - \bE[ S_n^{ \pm  }(\cdot)  ]  |\ge nb
\biggr) 
$$
decay at a sufficiently fast polynomial rate (depending on $p$) as $n \to \infty$. Here, $S_n^{+}(\omega)$ (resp. $S_n^{-}(\omega)$) denotes the number of parameters in the random sequence $( \beta(\omega), \ldots, \beta( \sigma^{n-1} \omega ) )$
(resp. in $( \beta( \sigma^{-n + 1}  \omega ), \ldots, \beta( \omega ) )$) that do not exceed $\gamma=\essinf(\beta(\omega))$.  This observation
enables us to use large deviations estimates from the literature on mixing random sequences to reduce the condition $K\in L^p(\Omega,\mathcal F,\mathbb P)$ to a condition involving $\gamma$ and the speed of mixing of the driving process. We also establish moment bounds for Birkhoff sums of the random dynamical system
in the regime $\gamma < 1/2$, which extend previous results in
\cite{NPT, KL, Su2}.
To our knowledge, the ASIP, CLT rates, and moment bounds for random LSV maps obtained in this paper are all new. 
We emphasize that previous studies on CLTs and invariance principles for random LSV maps, including 
\cite{Su, Su1, Su2, NPT, BQT, LSVW, LS}, either assume an i.i.d. driving process or impose conditions 
on the essential supremum of $\beta(\cdot)$ beyond $\Vert \beta(\cdot) \Vert_{L^\infty} < 1$.

\section{Quenched limit theorems}
Throughout this paper, $(\Omega,\mathcal F,\mathbb P,\sigma)$ is an ergodic probability-preserving system.
Let $M$ be a metric space, and let $M_ \omega, \omega\in \Omega$  be random measurable closed subsets; namely, 
we assume that 
\[
\mathcal M:=\{(\omega, x): \ \omega \in \Omega, \ x\in M_\omega \} \in \mathcal{F} \otimes \mathcal{B}(M),
\]
where $\mathcal{B}(M)$ denotes the Borel $\sigma$-algebra of $M$.
Let $T_\omega:M_\omega\to M_{\sigma\omega},\, \omega\in\Omega$, be a measurable family of maps (that is, the map $(\omega,x)\to T_\omega(x)$ is measurable). The Borel $\sigma$-algebra on $M_\omega$ will be denoted by $\mathcal B_\omega$. Let us assume that there are Borel probability measures $\mu_\omega$ on $M_\omega$ such that $(T_\omega)_*\mu_\omega=\mu_{\sigma\omega}$ for $\mathbb P$-a.e. $\omega \in \Omega$. Let $L_\omega$ be the transfer operator of $T_\omega$ with respect to these measures, namely for all bounded measurable functions $g:M_\omega\to\mathbb R$ and $f:M_{\sigma\omega}\to\mathbb R$ we have 
$$
\int_{M_\omega} g\cdot (f\circ T_\omega)\,d\mu_\omega=\int_{M_{\sigma \omega }} (L_\omega g) \cdot f\,d\mu_{\sigma_\omega}.
$$
Denote 
\begin{equation}\label{Ln}
	L_\omega^n=L_{\sigma^{n-1}\omega}\circ\ldots\circ L_{\sigma\omega}\circ L_{\omega}, \quad (\omega, n)\in \Omega \times \mathbb N.
\end{equation}
\begin{assumption}\label{P assum}
	There exists a finite set $\mathcal I \subset (0, \infty)$, a random variable $K\in L^p(\Omega,\mathcal F,\mathbb P)$ with $p>0$ and for each $s\in \mathcal I$ a decreasing sequence $(a_{s, n})_n$ of positive numbers converging to $0$ such that for $\mathbb P$-a.e. $\omega\in \Omega$,  every $s\in \mathcal I$, $r\leq i\leq j$ and Lipschitz functions $g_1 \colon M_{\sigma^r\omega}\to\mathbb R$ and $g_2\colon M_{\sigma^i\omega}\to\mathbb R$  we have 
	\begin{align}\label{DEC}
		\begin{split}
			& \bigl\Vert  \bigl[  L_{  \sigma^i \omega }^{j-i} \bigl(g_2  L_{ \sigma^r \omega }^{i-r}   (g_1 \bigr)  \bigr]_{ \sigma^j \omega }  \bigr\Vert_{  L^s(  \mu_{ \sigma^j \omega } ) }
			\\
			&\le  K( \sigma^i \omega )a_{s, j-i} (1 +  \Vert  g_1 \Vert_{ \Lip } )  (1 +  \Vert g_2 \Vert_{ \Lip } ),
		\end{split}
	\end{align}
	where $\|g\|_{Lip}$ is the usual Lipschitz norm and $[g]_\omega=g-\mu_{\omega}(g)$. In particular, 
	\begin{equation}\label{dec2}
		\|L_\omega^ng\|_{L^s(\mu_{\sigma^n \omega})}\le K(\omega)a_{s, n}(1+\|g\|_{\Lip}),
	\end{equation}
	for $\mathbb P$-a.e. $\omega \in \Omega$, $n\in \mathbb N$ and $g\colon M_\omega \to \mathbb R$ Lipschitz with $\int_{M_\omega}g\, d\mu_\omega=0$.
\end{assumption}
\begin{remark}
In the sequel, when we apply the previous assumption in the case where $\mathcal I=\{s\}$ we will write $a_n$ instead of $a_{s, n}$.
\end{remark}
In the sequel, we will prove quenched limit theorems for random variables of the form $S_n^\omega \varphi=\sum_{j=0}^{n-1}\varphi_{\sigma^j\omega}\circ T_\omega^j$ when viewed as random variables on the probability space $(M_\omega, \mathcal B_\omega ,\mu_\omega)$ where 
$$
T_\omega^j=T_{\sigma^{n-1}\omega}\circ\ldots\circ T_{\sigma\omega}\circ T_{\omega}.
$$
This will be done in the case when Assumption~\ref{P assum} holds with sufficiently large $p$ and $a_{s,n}$ that decays sufficiently fast to $0$.
%Denote $\eta(x)=Ax+B$.

We also consider the skew-product transformation $\tau \colon \mathcal M\to \mathcal M$ defined by 
\begin{equation}\label{spt}
	\tau(\omega, x)=(\sigma \omega, T_\omega(x)), \quad (\omega, x)\in \mathcal M.
\end{equation}
Let $\mu$ be a probability measure on $\mathcal M$ given by
\begin{equation}\label{measmu}
	\mu(\mathcal C)=\int_\Omega \mu_\omega(\mathcal C_\omega)\, d\mathbb P(\omega)
\end{equation}
for $\mathcal C\subset \mathcal M$ measurable (with respect to the trace
$\sigma$-algebra $[\mathcal{F}\otimes\mathcal{B}(M)]|_{\mathcal M}$), where 
\[
\mathcal C_\omega:=\{x\in M_\omega: \ (\omega, x)\in \mathcal C\}.
\]
Here we assume that
the map $\omega\mapsto\mu_\omega(\mathcal C_\omega)$ is measurable for each $\mathcal C\subset \mathcal M$ measurable.

\begin{proposition}\label{properg}
	Suppose that Assumption~\ref{P assum} holds with $\mathcal I=\{s\}$ with $s\ge 1$. Then $\mu$ is ergodic for $\tau$.
\end{proposition}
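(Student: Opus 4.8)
The plan is to reduce the ergodicity of $\tau$ to the ergodicity of the base $\sigma$ together with the fiberwise decay of correlations encoded in \eqref{dec2}. Let $A\subset\mathcal M$ be $\tau$-invariant, i.e.\ $\mu(\tau^{-1}A\triangle A)=0$, and put $A_\omega=\{x\in M_\omega:(\omega,x)\in A\}$. A Fubini-type computation using \eqref{measmu} and $(T_\omega)_*\mu_\omega=\mu_{\sigma\omega}$ shows $\mu_\omega(T_\omega^{-1}A_{\sigma\omega}\triangle A_\omega)=0$ for $\mathbb P$-a.e.\ $\omega$; iterating this identity along the $\sigma$-orbit (a countable union of $\mathbb P$-null sets) yields $\mu_\omega(T_\omega^{-n}A_{\sigma^n\omega}\triangle A_\omega)=0$ for $\mathbb P$-a.e.\ $\omega$ and all $n\in\mathbb N$, so that $\mathbf 1_{A_\omega}=\mathbf 1_{A_{\sigma^n\omega}}\circ T_\omega^n$ in $L^1(\mu_\omega)$. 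First I would note that $\omega\mapsto c(\omega):=\mu_\omega(A_\omega)$ is measurable by the standing hypothesis on $\omega\mapsto\mu_\omega(\mathcal C_\omega)$, and that $c(\omega)=\mu_\omega(T_\omega^{-1}A_{\sigma\omega})=\mu_{\sigma\omega}(A_{\sigma\omega})=c(\sigma\omega)$ for $\mathbb P$-a.e.\ $\omega$ by equivariance; ergodicity of $\sigma$ then forces $c(\omega)\equiv c$ for some constant $c\in[0,1]$, and $\mu(A)=c$ by \eqref{measmu}. It remains to show $c\in\{0,1\}$.

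Next I would exploit the decay estimate. Fix a Lipschitz $g\colon M_\omega\to\mathbb R$ and iterate the defining identity of the transfer operator $n$ times with the bounded measurable test function $f=\mathbf 1_{A_{\sigma^n\omega}}$. Using $\mathbf 1_{A_{\sigma^n\omega}}\circ T_\omega^n=\mathbf 1_{A_\omega}$ ($\mu_\omega$-a.e.), writing $g=[g]_\omega+\mu_\omega(g)$, and noting that $L_\omega^n 1=1$ (which follows from the defining identity with $g\equiv 1$ and $(T_\omega)_*\mu_\omega=\mu_{\sigma\omega}$), one obtains
\[
\int_{M_\omega} g\,\mathbf 1_{A_\omega}\,d\mu_\omega=\int_{M_{\sigma^n\omega}}\bigl(L_\omega^n [g]_\omega\bigr)\,\mathbf 1_{A_{\sigma^n\omega}}\,d\mu_{\sigma^n\omega}+c\,\mu_\omega(g).
\]
Since $\mu_\omega([g]_\omega)=0$, \eqref{dec2} applies to $[g]_\omega$, and because $s\ge 1$ we have $\|\cdot\|_{L^1(\mu_{\sigma^n\omega})}\le\|\cdot\|_{L^s(\mu_{\sigma^n\omega})}$ on the probability space; hence the first term on the right is bounded in absolute value by $\|L_\omega^n[g]_\omega\|_{L^s(\mu_{\sigma^n\omega})}\le K(\omega)a_n(1+\|[g]_\omega\|_{\Lip})$, which tends to $0$ as $n\to\infty$ for $\mathbb P$-a.e.\ $\omega$ because $K(\omega)<\infty$ a.e.\ and $a_n\to 0$. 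Letting $n\to\infty$ gives $\int_{M_\omega} g\,(\mathbf 1_{A_\omega}-c)\,d\mu_\omega=0$ for every Lipschitz $g$; since the error bound is controlled by $\|g\|_{\Lip}$, it suffices to run this over a countable $\|\cdot\|_{\Lip}$-dense family to get the conclusion for $\mathbb P$-a.e.\ $\omega$ simultaneously for all Lipschitz $g$.

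Finally, since Lipschitz functions are dense in $L^1(\mu_\omega)$ (for a Borel probability measure on a metric space) and $\mathbf 1_{A_\omega}-c\in L^\infty(\mu_\omega)$, the vanishing of $\int g\,(\mathbf 1_{A_\omega}-c)\,d\mu_\omega$ over this dense set forces $\mathbf 1_{A_\omega}=c$ $\mu_\omega$-a.e. As $\mathbf 1_{A_\omega}$ is $\{0,1\}$-valued, either $\mu_\omega(A_\omega)=0$ or $\mu_\omega(A_\omega)=1$, i.e.\ $c\in\{0,1\}$, and therefore $\mu(A)\in\{0,1\}$, proving ergodicity of $\tau$.

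I expect the main work to be measure-theoretic bookkeeping rather than hard analysis: checking the Fubini/iteration step produces an equivariant family $\{A_\omega\}$ outside a single $\mathbb P$-null set valid for all $n$, and arranging the conclusion $\int g(\mathbf 1_{A_\omega}-c)\,d\mu_\omega=0$ to hold for a.e.\ $\omega$ uniformly over all Lipschitz $g$ by passing to a countable dense subclass. The only genuinely dynamical ingredient is \eqref{dec2}, which supplies the fiberwise mixing that rules out nontrivial $\tau$-invariant structure concentrated along fibers; ergodicity of $\sigma$ alone would not be enough.
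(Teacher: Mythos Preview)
Your proof is correct and follows essentially the same route as the paper's: both use the fiberwise decay \eqref{dec2} to pair $L_\omega^n[g]_\omega$ against $\mathbf 1_{A_{\sigma^n\omega}}$, bound by the $L^s$-norm (using $s\ge 1$), and pass to a density argument to conclude that the fibers of the invariant set have trivial measure. The only cosmetic difference is in the organization: the paper first observes that $\Omega_0:=\{\omega:\mu_\omega(\mathcal C_\omega)>0\}$ is $\sigma$-invariant and hence null or conull, and on $\Omega_0$ shows $\mu_\omega(\mathcal C_\omega)=1$; you instead first prove $c(\omega)=\mu_\omega(A_\omega)$ is $\sigma$-invariant and hence constant, and then show $c\in\{0,1\}$. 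Your version also works with almost-invariant sets and is a bit more explicit about the countable-dense-family step needed to quantify ``for $\mathbb P$-a.e.\ $\omega$ and all Lipschitz $g$'', but the mathematical content is the same.
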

\begin{proof}
	Take a measurable subset $\mathcal C\subset \mathcal M$ such that $\tau^{-1}(\mathcal C)=\mathcal C$. We claim $\mu(\mathcal C)\in \{0, 1\}$. As in the proof of~\cite[Proposition 7]{DH-N}, we have 
	\begin{equation}\label{inv}
		T_\omega^{-1}(\mathcal C_{\sigma \omega})=\mathcal C_\omega, \quad \omega \in \Omega.
	\end{equation}
	Set
	\[
	\Omega_0:=\{\omega \in \Omega: \ \mu_\omega(\mathcal C_\omega)>0\}.
	\]
	By~\eqref{inv}, $\Omega_0$ is $\sigma$-invariant, and consequently $\mathbb P(\Omega_0)\in \{0, 1\}$. Without loss of generality, we may assume that $\mathbb P(\Omega_0)=1$ as if $\mathbb P(\Omega_0)=0$, we have $\mu(\mathcal C)=0$.
	Hence, we may assume that~\eqref{dec2} holds for each $\omega \in \Omega_0$. We now claim that 
	\begin{equation}\label{claim}
		\int_{\mathcal C_\omega}\phi\, d\mu_\omega=0, \quad \text{for $\omega \in \Omega_0$ and $\phi \colon M_\omega \to \mathbb R$ Lipschitz with $\int_{M_\omega}\phi \, d\mu_\omega=0$. }
	\end{equation}
	Using~\eqref{dec2} we have 
	\[
	\begin{split}
		\left |\int_{\mathcal C_\omega}\phi \, d\mu_\omega \right |=\left |\int_{M_\omega}\phi \textbf{1}_{\mathcal C_\omega}\, d\mu_\omega \right |=\left |\int_{M_\omega}\phi \cdot (\textbf{1}_{\mathcal C_{\sigma^n \omega}}\circ T_\omega^n)\, d\mu_\omega \right | &=\left |\int_{M_{\sigma^n \omega}}(L_\omega^n \phi)\cdot \textbf{1}_{\mathcal C_{\sigma^n \omega}}\, d\mu_{\sigma^n \omega}\right |\\
		&\le \|L_\omega^n \phi\|_{L^1(\mu_{\sigma^n \omega)}}\\
		&\le \|L_\omega^n \phi\|_{L^s(\mu_{\sigma^n \omega)}}\\
		&\le K(\omega)a_n(1+\|\phi\|_{\Lip}),
	\end{split}
	\]
	for $\omega \in \Omega_0$ and $n\in \mathbb N$. Letting $n\to \infty$, we obtain~\eqref{claim} (recall that $a_n\to 0$). As every continuous function on $M_\omega$ can be approximated by a Lipschitz function (in the supremum norm), we see that~\eqref{claim} also holds for $\phi\colon M_\omega \to \mathbb R$ continuous. Finally, continuous functions are dense in $L^1(\mu_\omega)$, and thus~\eqref{claim} holds for $\phi \in L^1(\mu_\omega)$. This yields $\mu_\omega(\mathcal C_\omega)=1$ for $\omega \in \Omega_0$. Thus, $\mu(\mathcal C)=1$.
	
\end{proof}

\subsection{Central limit theorem}
We first establish a quenched central limit theorem.
\begin{theorem}\label{T-CLT}
Let $\varphi \colon \mathcal M\to \mathbb R$ be a measurable map satisfying the following conditions:
\begin{itemize}
\item $\int_{M_\omega}\varphi_\omega \, d\mu_\omega=0$ for $\mathbb P$-a.e. $\omega \in \Omega$, where $\varphi_\omega:=\varphi(\omega, \cdot)$;
\item for $\mathbb P$-a.e. $\omega \in \Omega$, $\varphi_\omega$ is Lipschitz and for some $r>0$,
\begin{equation}\label{obsint}
\omega \mapsto \|\varphi_\omega\|_{\text{Lip}} \in L^r(\Omega, \mathcal F, \mathbb P).
\end{equation}
\end{itemize}
Suppose that Assumption~\ref{P assum} holds with $\mathcal I=\{s\}$ with $s\ge 1$,
\begin{equation}\label{gammaq}
\frac 1 p+\frac 1 r \le  \frac 1 2 \quad \text{and} \quad  \sum_{n=1}^\infty a_n <+\infty.
\end{equation} Then there exists $\Sigma^2\ge 0$ such that 
\begin{equation}\label{variance}
\lim_{n\to \infty}\frac 1 n \int_{M_\omega} \left (\sum_{k=0}^{n-1}\varphi_{\sigma^k \omega} \circ T_\omega^k\right )^2\, d\mu_\omega=\Sigma^2 \quad \text{for $\mathbb P$-a.e. $\omega \in \Omega$.}
\end{equation}
Moreover, for $\mathbb P$-a.e. $\omega \in \Omega$,
\[
\frac{1}{\sqrt n}\sum_{k=0}^{n-1}\varphi_{\sigma^k \omega} \circ T_\omega^k \to_d \mathcal N(0, \Sigma^2),
\]
where $\mathcal N(0, \Sigma^2)$ denotes the normal distribution with mean $0$
and variance $\Sigma^2$ provided that $\Sigma^2>0$, and the unit mass in $0$ otherwise. 
\end{theorem}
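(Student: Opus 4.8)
The plan is to perform a Gordin-type martingale--coboundary decomposition fiberwise, in the backward direction, and then to deduce the distributional statement from a central limit theorem for (reverse) martingale difference arrays, whose hypotheses are checked by transferring them to the skew product $(\mathcal M,\mu,\tau)$ --- which is ergodic by Proposition~\ref{properg}. To this end, for $\bP$-a.e.\ $\omega$ I would introduce the corrector
\[
\chi_\omega:=\sum_{n\ge 1}L_{\sigma^{-n}\omega}^{\,n}\varphi_{\sigma^{-n}\omega},
\]
a function on $M_\omega$. By~\eqref{dec2} applied at $\sigma^{-n}\omega$ (note $\mu_{\sigma^{-n}\omega}(\varphi_{\sigma^{-n}\omega})=0$), its $n$-th summand has $L^s(\mu_\omega)$-norm at most $K(\sigma^{-n}\omega)a_n(1+\|\varphi_{\sigma^{-n}\omega}\|_{\Lip})$, and $\bE[\sum_{n\ge 1}a_nK(\sigma^{-n}\cdot)(1+\|\varphi_{\sigma^{-n}\cdot}\|_{\Lip})]=(\sum_{n\ge 1}a_n)\bE[K(1+\|\varphi_\cdot\|_{\Lip})]<\infty$ by H\"older's inequality (since $\tfrac1p+\tfrac1r\le 1$); hence the series converges in $L^s(\mu_\omega)$ for a.e.\ $\omega$. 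By construction $\chi_{\sigma\omega}=L_\omega(\varphi_\omega+\chi_\omega)$, so $g_\omega:=\varphi_\omega+\chi_\omega-\chi_{\sigma\omega}\circ T_\omega$ satisfies $L_\omega g_\omega=0$ and, by telescoping,
\[
S_n^\omega\varphi=\sum_{k=0}^{n-1}g_{\sigma^k\omega}\circ T_\omega^k+\chi_{\sigma^n\omega}\circ T_\omega^n-\chi_\omega .
\]
Writing $G(\omega,x):=g_\omega(x)$, the main term $M_n:=\sum_{k=0}^{n-1}g_{\sigma^k\omega}\circ T_\omega^k$ is, on each fiber $(M_\omega,\mathcal B_\omega,\mu_\omega)$, a sum of reverse martingale differences with respect to the decreasing filtration $\mathcal F_k^\omega:=(T_\omega^k)^{-1}\mathcal B_{\sigma^k\omega}$: the identity $\bE_{\mu_\omega}[h\circ T_\omega^k\mid\mathcal F_{k+1}^\omega]=(L_{\sigma^k\omega}h)\circ T_\omega^{k+1}$ with $h=g_{\sigma^k\omega}$ and $L_{\sigma^k\omega}g_{\sigma^k\omega}=0$ gives $\bE_{\mu_\omega}[g_{\sigma^k\omega}\circ T_\omega^k\mid\mathcal F_{k+1}^\omega]=0$.

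The decisive point --- and the one I expect to be the main obstacle --- is that $G\in L^2(\mu)$, equivalently $\omega\mapsto\|g_\omega\|_{L^2(\mu_\omega)}\in L^2(\bP)$; this is exactly what the assumption $\tfrac1p+\tfrac1r\le\tfrac12$ is designed to yield. When $s\ge 2$ one estimates $\|\chi_\omega\|_{L^2(\mu_\omega)}\le\sum_{n\ge 1}\|L_{\sigma^{-n}\omega}^{\,n}\varphi_{\sigma^{-n}\omega}\|_{L^s(\mu_\omega)}$ via~\eqref{dec2}, takes $L^2(\bP)$-norms by Minkowski's inequality, and applies H\"older with exponents $p$ and $(\tfrac12-\tfrac1p)^{-1}\le r$ to see that the resulting constant is finite (together with $\|\varphi_\omega\|_{L^2(\mu_\omega)}\le\|\varphi_\omega\|_{\Lip}\in L^r\subseteq L^2$). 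When $1\le s<2$ one expands $\|\chi_\omega\|_{L^2(\mu_\omega)}^2$ as a double sum and controls the cross terms $\int_{M_\omega}(L_{\sigma^{-n}\omega}^{\,n}\varphi_{\sigma^{-n}\omega})(L_{\sigma^{-m}\omega}^{\,m}\varphi_{\sigma^{-m}\omega})\,d\mu_\omega$ by the full decorrelation estimate~\eqref{DEC}, which supplies an extra decay factor $a_{|m-n|}$; taking expectations then leaves a convergent double series of the form $\sum_{n,d}a_na_d=(\sum_na_n)^2$, and the same H\"older bookkeeping on $\Omega$ closes the bound.

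With $G\in L^2(\mu)$ secured, the conclusions reduce to Birkhoff's theorem on the ergodic system $(\mathcal M,\mu,\tau)$. First, orthogonality of the reverse martingale differences gives $\tfrac1n\|M_n\|_{L^2(\mu_\omega)}^2=\tfrac1n\sum_{k=0}^{n-1}\|g_{\sigma^k\omega}\|_{L^2(\mu_{\sigma^k\omega})}^2\to\int_{\mathcal M}G^2\,d\mu=:\Sigma^2$ (Birkhoff on $(\Omega,\cF,\bP,\sigma)$), while applying Birkhoff to $\omega\mapsto\|\chi_\omega\|_{L^2(\mu_\omega)}^2\in L^1(\bP)$ yields $\|\chi_{\sigma^n\omega}\|_{L^2(\mu_\omega)}=o(\sqrt n)$; combined with the displayed decomposition this gives~\eqref{variance} and shows $n^{-1/2}(S_n^\omega\varphi-M_n)\to 0$ in $L^2(\mu_\omega)$. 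For the CLT I would apply a central limit theorem for reverse martingale difference arrays to $D_{n,k}:=n^{-1/2}g_{\sigma^k\omega}\circ T_\omega^k$ on the fixed probability space $(M_\omega,\mathcal B_\omega,\mu_\omega)$; its hypotheses hold for $\bP$-a.e.\ $\omega$ because $\bE_{\mu_\omega}[(g_{\sigma^k\omega}\circ T_\omega^k)^2\mid\mathcal F_{k+1}^\omega]=P\circ\tau^{k+1}$, where $P(\omega,x):=(L_{\sigma^{-1}\omega}(g_{\sigma^{-1}\omega}^2))(x)$ lies in $L^1(\mu)$ with $\int_{\mathcal M}P\,d\mu=\Sigma^2$, so that $\sum_k\bE_{\mu_\omega}[D_{n,k}^2\mid\mathcal F_{k+1}^\omega]=\tfrac1n\sum_{k=1}^nP\circ\tau^k\to\Sigma^2$ $\mu_\omega$-a.s.\ by Birkhoff on $(\mathcal M,\mu,\tau)$ (here the ergodicity from Proposition~\ref{properg} is essential), while the conditional Lindeberg condition follows similarly by dominating the truncated conditional second moments by $P$, using that $\mathbf 1_{\{|g_{\sigma^{-1}\omega}|>\varepsilon\sqrt n\}}$ decreases in $n$, and invoking dominated convergence via $G\in L^2(\mu)$. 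The array CLT gives $n^{-1/2}M_n\to_d\mathcal N(0,\Sigma^2)$ on $(M_\omega,\mu_\omega)$, and Slutsky's lemma together with $n^{-1/2}(S_n^\omega\varphi-M_n)\to 0$ in $\mu_\omega$-probability finishes the argument (when $\Sigma^2=0$ one simply has $G=0$ $\mu$-a.e., hence $M_n\equiv 0$, so $n^{-1/2}S_n^\omega\varphi\to 0$ in $\mu_\omega$-probability).

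Apart from the $L^2(\mu)$-integrability of the martingale difference, which is where the hypothesis $\tfrac1p+\tfrac1r\le\tfrac12$ and the decay estimates~\eqref{dec2}--\eqref{DEC} must be combined with H\"older's inequality on $(\Omega,\cF,\bP)$, every remaining step is either routine bookkeeping or a transfer of classical ergodic-theoretic and martingale-CLT facts to the skew product $\tau$.
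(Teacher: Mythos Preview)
Your overall strategy coincides with what underlies Kifer's Theorem~2.3, which the paper simply cites: a Gordin-type corrector $\chi_\omega=\sum_{n\ge1}L_{\sigma^{-n}\omega}^n\varphi_{\sigma^{-n}\omega}$, a reverse-martingale difference $g_\omega$, and a martingale CLT whose hypotheses are verified via Birkhoff's theorem on the ergodic skew product. For $s\ge2$ your argument is complete: $\|c_n\|_{L^2(\mu_\omega)}\le\|c_n\|_{L^s(\mu_\omega)}$ combined with~\eqref{dec2}, Minkowski in $L^2(\bP)$, and H\"older with $\tfrac1p+\tfrac1r\le\tfrac12$ give $\chi\in L^2(\mu)$, and the conditional-variance and Lindeberg checks are handled exactly as you describe.

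There is, however, a genuine gap in the case $1\le s<2$. You claim that expanding $\|\chi_\omega\|_{L^2(\mu_\omega)}^2$ and bounding the cross terms $\int_{M_\omega}c_n c_m\,d\mu_\omega$ via~\eqref{DEC} produces a ``convergent double series of the form $\sum_{n,d}a_na_d$''. But~\eqref{DEC} controls expressions of the form $[L^{j-i}(g_2\,L^{i-r}g_1)]$, not products $(L^n g_1)(L^m g_2)$; no duality manipulation transforms $\int c_nc_m\,d\mu_\omega$ into the former shape. What is available for $m>n$ is either the bound $\|\varphi_{\sigma^{-n}\omega}\|_{\Lip}\,K(\sigma^{-m}\omega)(1+\|\varphi_{\sigma^{-m}\omega}\|_{\Lip})\,a_{m-n}$ (writing $c_m=L_{\sigma^{-n}\omega}^n\psi$ with $\psi=L_{\sigma^{-m}\omega}^{m-n}\varphi_{\sigma^{-m}\omega}$ and using $\|c_n\|_\infty\le\|\varphi_{\sigma^{-n}\omega}\|_{\Lip}$), or the symmetric $L^\infty$--$L^1$ bounds with a single factor $a_m$ or $a_n$. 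None of these yields a \emph{product} of two $a$-factors, so after taking $\bP$-expectation the resulting series is of the type $\sum_n\sum_{d\ge0}a_d$ or $(\sum_n\sqrt{a_n})^2$, neither of which converges under the sole hypothesis $\sum_n a_n<\infty$. The paper avoids this obstacle because Kifer's condition~(ii)$'$ only requires the weaker statement $\bigl\|\sum_{n\ge1}\|L_{\sigma^{-n}\omega}^n\varphi_{\sigma^{-n}\omega}\|_{L^1(\mu_\omega)}\bigr\|_{L^2(\bP)}<\infty$, which follows from~\eqref{dec2} for every $s\ge1$; Kifer's proof then supplies the CLT without ever asserting $\chi\in L^2(\mu)$. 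If you wish to keep a self-contained argument, one standard remedy is to work with the truncated correctors $\chi_\omega^{(N)}=\sum_{n=1}^N c_n$ (which are bounded, so $G^{(N)}\in L^2(\mu)$ trivially), prove the CLT with variance $\Sigma_N^2$ for each $N$, and then let $N\to\infty$ using condition~(i)-type estimates to control the remainder and the convergence $\Sigma_N^2\to\Sigma^2$.
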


\begin{proof}
We verify the assumptions of~\cite[Theorem 2.3]{Kifer} with $Q=\Omega$ (that is, when there is no inducing involved). 
Firstly, since $\|\varphi_\omega\|_{L^2(\mu_\omega)}\le \|\varphi_\omega\|_{\text{Lip}}$, \cite[(2.7)]{Kifer} follows readily from~\eqref{obsint} and~\eqref{gammaq} (which implies that $r\ge 2$).

Secondly, using~\eqref{dec2} and~\eqref{obsint} we have 
\[
\begin{split}
\sum_{n=1}^\infty \left |\mathbb E_\omega (\varphi_\omega \cdot (\varphi_{\sigma^n \omega}\circ T_\omega^n))\right |&=\sum_{n=1}^\infty \left |\int_{M_\omega} \varphi_\omega \cdot (\varphi_{\sigma^n \omega}\circ T_\omega^n)\, d\mu_\omega \right | \\
&=\sum_{n=1}^\infty \left |\int_{M_\omega} (L_\omega^n \varphi_\omega)\varphi_{\sigma^n \omega}\, d\mu_{\sigma^n \omega}\right | \\
&\le \sum_{n=1}^\infty\|L_\omega^n \varphi_\omega \|_{L^1(\mu_\omega)} \cdot \|\varphi_{\sigma^n \omega}\|_{\Lip}\\
&\le \sum_{n=1}^\infty\|L_\omega^n \varphi_\omega \|_{L^s(\mu_\omega)} \cdot \|\varphi_{\sigma^n \omega}\|_{\Lip}\\
&\le K(\omega)(1+\|\varphi_\omega\|_{\Lip})\sum_{n=1}^\infty a_n  \|\varphi_{\sigma^n \omega}\|_{\Lip},
\end{split}
\]
for $\mathbb P$-a.e. $\omega \in \Omega$.  Using that $\sigma$ preserves $\mathbb P$, \eqref{gammaq} (which implies that $\frac 1 p+\frac 2 r\le 1$) and the H\"{o}lder inequality, 
 we conclude that
\[
\mathbb E_{\mathbb P}\sum_{n=1}^\infty \left |\mathbb E_\omega (\varphi_\omega \cdot (\varphi_{\sigma^n \omega}\circ T_\omega^n))\right |<+\infty,
\]
verifying~\cite[Theorem 2.3 (i)]{Kifer}.

Similarly, 
\[
\begin{split}
\sum_{n=1}^\infty \mathbb E_\omega |L_{\sigma^{-n}\omega}^n\varphi_{\sigma^{-n}\omega}| &=\sum_{n=1}^\infty \|L_{\sigma^{-n}\omega}^n \varphi_{\sigma^{-n}\omega}\|_{L^1(\mu_\omega)}\\
&\le \sum_{n=1}^\infty \|L_{\sigma^{-n}\omega}^n \varphi_{\sigma^{-n}\omega}\|_{L^s(\mu_\omega)}\\
&\le \sum_{n=1}^\infty K(\sigma^{-n}\omega)a_n (1+\|\varphi_{\sigma^{-n}\omega}\|_{\Lip}),
\end{split}
\]
for $\mathbb P$-a.e. $\omega \in \Omega$. Since $\sigma$ preserves $\mathbb P$ and~\eqref{gammaq} holds, we conclude that
\[
\left \|\sum_{n=1}^\infty \mathbb E_\omega |L_{\sigma^{-n}\omega}^n\varphi_{\sigma^{-n}\omega}|\right \|_{L^2(\Omega, \mathcal F, \mathbb P)}<+\infty.
\]
Hence, \cite[Theorem 2.3 (ii)']{Kifer} holds. The conclusion of the theorem now follows from~\cite[Theorem 2.3]{Kifer}.
\end{proof}

\subsection{Moment bounds and concentration inequalities}
Next, we establish quenched moment bounds and the corresponding concentration inequalities.
\begin{theorem}\label{MomThm}
Let $\varphi \colon \mathcal M\to \mathbb R$ be as in the statement of Theorem~\ref{T-CLT}.
%measurable map satisfying the following conditions:
%\begin{itemize}
%\item $\int_{M_\omega}\varphi_\omega \, d\mu_\omega=0$ for $\mathbb P$-a.e. $\omega \in \Omega$, where $\varphi_\omega:=\varphi(\omega, \cdot)$;
%\item for $\mathbb P$-a.e. $\omega \in \Omega$, $\varphi_\omega$ is Lipschitz and~\eqref{obsint} is true for $r>0$.
%\end{itemize}
Furthermore, suppose that Assumption~\ref{P assum} holds with $\mathcal I=\{s\}$ for $s\ge 2$ and
\begin{equation}\label{529c}
 \sum_{n=1}^\infty a_n<+\infty.
\end{equation}
 Then for every $\delta>0$ we have that for $\mathbb P$-a.e. $\omega \in \Omega$ and every $n\in\mathbb N$, 
$$
\|S_n^{\omega}\varphi\|_{L^s(\mu_\omega)}\leq  \bar K(\omega)n^{1/2+1/p+1/r+\delta}
$$
where $\bar K\in L^{\varrho}(\Omega,\mathcal F,\mathbb P)$ with $\frac{1}{\varrho}=\frac 1 p+\frac 1 r$ and
\[
S_n^\omega \varphi:=\sum_{k=0}^{n-1}\varphi_{\sigma^k \omega}\circ T_\omega^k.
\]
\end{theorem}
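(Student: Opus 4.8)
The plan is to prove this via a transfer-operator martingale--coboundary (Gordin-type) decomposition of $S_n^\omega\varphi$, Burkholder's inequality, and a careful bookkeeping of the random multiplicative constants. We may assume $\tfrac1p+\tfrac1r<\tfrac12$, since otherwise $n^{1/2+1/p+1/r+\delta}\ge n^{1+\delta}$ and the bound follows at once from $\|S_n^\omega\varphi\|_{L^s(\mu_\omega)}\le\sum_{k=0}^{n-1}\|\varphi_{\sigma^k\omega}\|_{\Lip}\le n^{1+\delta}\tilde K(\omega)$, where $\tilde K(\omega):=\sup_{k\ge0}(k+1)^{-\delta}\|\varphi_{\sigma^k\omega}\|_{\Lip}$ lies in $L^r\subseteq L^\varrho$ by the Borel--Cantelli lemma and~\eqref{obsint}. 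In particular $\varrho>2$, so the triangle inequality in $L^\varrho(\mathbb P)$ is available.

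Set $g_\omega:=\sum_{n\ge1}L^n_{\sigma^{-n}\omega}\varphi_{\sigma^{-n}\omega}$. Since $\int\varphi_\omega\,d\mu_\omega=0$, \eqref{dec2} gives $\|g_\omega\|_{L^s(\mu_\omega)}\le G(\omega):=\sum_{n\ge1}a_n\,K(\sigma^{-n}\omega)\bigl(1+\|\varphi_{\sigma^{-n}\omega}\|_{\Lip}\bigr)$, and Minkowski's inequality, $\sigma$-invariance of $\mathbb P$, Hölder's inequality (with $\tfrac1\varrho=\tfrac1p+\tfrac1r$), and $\sum_na_n<\infty$ yield $\|G\|_{L^\varrho(\mathbb P)}\le\|K\|_{L^p}\,\bigl\|1+\|\varphi_\cdot\|_{\Lip}\bigr\|_{L^r}\sum_na_n<\infty$; in particular the series defining $g_\omega$ converges in $L^s(\mu_\omega)$ for $\mathbb P$-a.e. $\omega$. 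Using $L^{n+1}_{\sigma^{-n}\omega}=L_\omega\circ L^n_{\sigma^{-n}\omega}$ and $L_\omega 1=1$ one checks the cohomological identity $g_{\sigma\omega}=L_\omega(\varphi_\omega+g_\omega)$, so that
\[
\psi_\omega:=\varphi_\omega+g_\omega-g_{\sigma\omega}\circ T_\omega
\]
satisfies $L_\omega\psi_\omega=0$ and $\|\psi_\omega\|_{L^s(\mu_\omega)}\le A(\omega):=\|\varphi_\omega\|_{\Lip}+G(\omega)+G(\sigma\omega)\in L^\varrho(\mathbb P)$. Telescoping the coboundary, $S_n^\omega\varphi=M_n^\omega+g_{\sigma^n\omega}\circ T_\omega^n-g_\omega$, where $M_n^\omega:=\sum_{k=0}^{n-1}\psi_{\sigma^k\omega}\circ T_\omega^k$.

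With respect to the decreasing filtration $\mathcal G_k^\omega:=(T_\omega^k)^{-1}\mathcal B_{\sigma^k\omega}$ on $(M_\omega,\mu_\omega)$ one has $\mathbb E_{\mu_\omega}\bigl[\psi_{\sigma^k\omega}\circ T_\omega^k\mid\mathcal G_{k+1}^\omega\bigr]=(L_{\sigma^k\omega}\psi_{\sigma^k\omega})\circ T_\omega^{k+1}=0$, so $M_n^\omega$ is a reverse martingale. Burkholder's inequality (valid since $s\ge2$), the triangle inequality in $L^{s/2}(\mu_\omega)$, and $(T_\omega^k)_*\mu_\omega=\mu_{\sigma^k\omega}$ give
\[
\|M_n^\omega\|_{L^s(\mu_\omega)}\le c_s\Bigl(\sum_{k=0}^{n-1}\|\psi_{\sigma^k\omega}\|_{L^s(\mu_{\sigma^k\omega})}^2\Bigr)^{1/2}\le c_s\Bigl(\sum_{k=0}^{n-1}A(\sigma^k\omega)^2\Bigr)^{1/2}.
\]
Finally I would convert the integrability $A,G\in L^\varrho(\mathbb P)$ into growth rates: by the Borel--Cantelli lemma, for fixed $\delta>0$ the random variables $\bar K_0(\omega):=\sup_{k\ge0}(k+1)^{-1/\varrho-\delta}A(\sigma^k\omega)$ and $\bar K_1(\omega):=\sup_{k\ge0}(k+1)^{-1/\varrho-\delta}G(\sigma^k\omega)$ are $\mathbb P$-a.e.\ finite and lie in $L^\varrho(\mathbb P)$ (since $\mathbb E[\bar K_0^\varrho]\le\mathbb E[A^\varrho]\sum_{k\ge0}(k+1)^{-1-\varrho\delta}<\infty$, and similarly for $\bar K_1$). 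Hence $\sum_{k=0}^{n-1}A(\sigma^k\omega)^2\le C_\delta\,\bar K_0(\omega)^2\,n^{1+2/\varrho+2\delta}$ and $\|g_\omega\|_{L^s(\mu_\omega)}+\|g_{\sigma^n\omega}\|_{L^s(\mu_{\sigma^n\omega})}\le G(\omega)+G(\sigma^n\omega)\le2\bar K_1(\omega)n^{1/\varrho+\delta}$, and combining with the previous display (using $\tfrac1\varrho=\tfrac1p+\tfrac1r$) yields $\|S_n^\omega\varphi\|_{L^s(\mu_\omega)}\le\bar K(\omega)n^{1/2+1/p+1/r+\delta}$ with $\bar K:=c_s\sqrt{C_\delta}\,\bar K_0+2\bar K_1\in L^\varrho(\mathbb P)$.

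The main obstacle is not the decomposition itself --- which is dictated by the transfer-operator formalism and mirrors the martingale input already used in Theorem~\ref{T-CLT} --- but keeping every random constant appearing along the way in $L^\varrho(\mathbb P)$ with $\tfrac1\varrho=\tfrac1p+\tfrac1r$ while using only $\sum_na_n<\infty$: one must interleave Hölder's inequality (to trade the $L^p$-control of $K$ against the $L^r$-control of $\|\varphi_\cdot\|_{\Lip}$), $\sigma$-invariance of $\mathbb P$, and a Borel--Cantelli argument turning a.e.\ finiteness into the polynomial rate $n^{1/\varrho+\delta}$. A secondary point to verify is the $\mathbb P$-a.e.\ convergence of the series defining $g_\omega$, which follows from $\sum_na_n<\infty$ and $K\bigl(1+\|\varphi_\cdot\|_{\Lip}\bigr)\in L^1(\mathbb P)$ (itself a consequence of $\varrho>1$).
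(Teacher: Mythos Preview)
Your overall strategy---martingale--coboundary decomposition followed by Burkholder---matches the paper exactly, but your coboundary is the \emph{stationary} one $g_\omega=\sum_{n\ge1}L^n_{\sigma^{-n}\omega}\varphi_{\sigma^{-n}\omega}$, whereas the paper uses the \emph{non-stationary} finite sum $G_{\omega,n}=\sum_{j=0}^{n-1}L^{n-j}_{\sigma^j\omega}\varphi_{\sigma^j\omega}$. Both lead to the same Burkholder step, but they handle the random constants differently: the paper bounds $\|G_{\omega,n}\|_{L^s}\le K_\delta(\omega)D_\delta(\omega)\,n^{1/p+1/r+\delta}\sum_k a_k$ pointwise in $\omega$ (with $K_\delta\in L^p$, $D_\delta\in L^r$ coming from \cite[Lemma~3]{DH}), so $A:=K_\delta D_\delta\in L^\varrho$ follows from H\"older alone---no Minkowski in $L^\varrho(\mathbb P)$ is ever invoked, and the argument works for all $\varrho>0$. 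Your route instead needs $G\in L^\varrho$ for an infinite sum, which forces the triangle inequality in $L^\varrho(\mathbb P)$.

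This is where a gap appears. Your reduction to $\varrho>2$ claims that $\tilde K(\omega):=\sup_{k\ge0}(k+1)^{-\delta}\|\varphi_{\sigma^k\omega}\|_{\Lip}$ lies in $L^r$; but $\mathbb E[\tilde K^r]\le\mathbb E[\|\varphi_\cdot\|_{\Lip}^r]\sum_k(k+1)^{-r\delta}$ is finite only when $r\delta>1$, so for small $\delta$ the claim is false. In fact your main argument already works for all $\varrho\ge1$ (Minkowski needs only that), so the reduction is unnecessary there; the genuine problem is the regime $\varrho<1$, where neither your reduction nor your series argument for $G\in L^\varrho$ goes through (for $\varrho<1$ one only gets $\mathbb E[G^\varrho]\le C\sum_n a_n^{\varrho}$, which need not converge). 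The paper's finite-sum decomposition avoids this entirely: since $G_{\omega,n}$ is a finite sum there is no convergence issue, and the $L^\varrho$ membership of $\bar K$ comes from a single application of H\"older rather than an infinite Minkowski sum.
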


Applying the Markov inequality, we get the following result.
\begin{corollary}
Let the assumptions of Theorem~\ref{MomThm} be in force. Then
for $\mathbb P$-a.e. $\omega \in \Omega$ and every $\delta, t>0$,  we have that 
$$
\mu_{\omega}(|S_n^\omega\varphi|\geq tn)\leq t^{-s}(\bar K(\omega))^{s}n^{-s(1/2-1/p-1/r-\delta)}.
$$
\end{corollary}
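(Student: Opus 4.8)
The plan is to read off the concentration inequality directly from the $L^s$-moment bound of Theorem~\ref{MomThm} by a single application of Markov's inequality on the fibre probability spaces $(M_\omega,\mathcal B_\omega,\mu_\omega)$. Fix $\delta>0$. By Theorem~\ref{MomThm} there is a full-measure set $\Omega_\delta\subset\Omega$ and a random variable $\bar K=\bar K_\delta\in L^{\varrho}(\Omega,\mathcal F,\mathbb P)$ with $1/\varrho=1/p+1/r$ such that $\|S_n^{\omega}\varphi\|_{L^s(\mu_\omega)}\le \bar K(\omega)\,n^{1/2+1/p+1/r+\delta}$ for all $\omega\in\Omega_\delta$ and all $n\in\mathbb N$; note this set is already free of $t$. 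For such $\omega$, $n$ and any $t>0$, apply Markov's inequality to the nonnegative function $|S_n^{\omega}\varphi|^s$:
\[
\mu_\omega\bigl(|S_n^\omega\varphi|\ge tn\bigr)=\mu_\omega\bigl(|S_n^\omega\varphi|^s\ge (tn)^s\bigr)\le (tn)^{-s}\int_{M_\omega}|S_n^\omega\varphi|^s\,d\mu_\omega=(tn)^{-s}\,\|S_n^\omega\varphi\|_{L^s(\mu_\omega)}^s .
\]
Inserting the moment bound and simplifying the exponent, $(tn)^{-s}\bigl(\bar K(\omega)\,n^{1/2+1/p+1/r+\delta}\bigr)^s=t^{-s}(\bar K(\omega))^s\,n^{-s(1/2-1/p-1/r-\delta)}$, which is precisely the asserted estimate.

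To obtain a single full-measure set on which the bound holds for every $\delta>0$ simultaneously, I would fix a sequence $\delta_k\downarrow 0$, set $\Omega_\infty=\bigcap_k\Omega_{\delta_k}$ (still of full measure), and observe that on $\Omega_\infty$ the estimate holds with $\delta=\delta_k$ for each $k$. Given an arbitrary $\delta>0$, pick $k$ with $\delta_k\le\delta$; since $n\ge 1$ we have $n^{-s(1/2-1/p-1/r-\delta_k)}\le n^{-s(1/2-1/p-1/r-\delta)}$, so the $\delta_k$-estimate (with its own constant $\bar K_{\delta_k}$) already yields the $\delta$-estimate. Measurability and the membership $\bar K\in L^\varrho$ are inherited verbatim from Theorem~\ref{MomThm}.

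I do not expect any real obstacle: all the analytic work sits in Theorem~\ref{MomThm}, and the corollary is the routine passage from an $L^s$ bound to a polynomial tail bound. The only mild bookkeeping point is the order of the quantifiers ``for $\mathbb P$-a.e.\ $\omega$'' versus ``for every $\delta$'', which is dealt with by the countable-intersection argument above; everything else is the one-line Markov estimate.
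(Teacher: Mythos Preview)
Your argument is correct and matches the paper's own proof, which is simply the one-line application of Markov's inequality to the $L^s$-moment bound of Theorem~\ref{MomThm}. The additional countable-intersection bookkeeping you include to handle the quantifier order on $\delta$ is a reasonable clarification that the paper omits.
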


\begin{proof}[Proof of Theorem \ref{MomThm}]
For $\omega \in \Omega$ and $n\in \mathbb N$, set
	\begin{equation}\label{eq:G}
	G_{\omega, n}:=\sum_{j=0}^{n-1}L_{\sigma^j \omega}^{n-j}(\varphi_{\sigma^j \omega}),
	\end{equation}
	and 
	\begin{equation}\label{mdec}
		H_{\omega, n}:=\varphi_{\sigma^n \omega}+G_{\omega, n}-G_{\omega, n+1}\circ T_{\sigma^n \omega}.
	\end{equation}
	As $G_{\omega, 0}=0$, \eqref{mdec} gives that 
	\begin{equation}\label{bsums}
		S_n^\omega \varphi=\sum_{k=0}^n \varphi_{\sigma^k \omega}\circ T_\omega^k=\sum_{k=0}^n H_{\omega, k}\circ T_\omega^k+G_{\omega, n+1}\circ T_\omega^{n+1}.
	\end{equation}
	As in~\cite[Proposition 2]{DFGTV}, 
	\begin{equation}\label{mdiff}
		\mathbb E_\omega[H_{\omega, n}\circ T_\omega^n\rvert (T_\omega^{n+1})^{-1}(\mathcal B_{\sigma^{n+1}\omega})]=0, \quad \text{for $\mathbb P$-a.e. $\omega \in \Omega$ and $n\in \mathbb N_0$,}
	\end{equation}
	where $\mathbb E_\omega[\psi \rvert \mathcal F]$ denotes the conditional expectation of $\psi$ with respect to $\sigma$-algebra $\mathcal F$ and the measure $\mu_\omega$. 
    %Moreover, $\mathcal B$ is the Borel $\sigma$-algebra on $[0, 1]$.
	\begin{lemma}\label{GM}
		For any $\delta>0$, we have that for $\mathbb P$-a.e. $\omega \in \Omega$ and $n\in \mathbb N$,
		\[
		\|G_{\omega, n}\|_{L^s(\mu_{\sigma^n\omega})}\le A(\omega) n^{1/p+1/r+\delta} \quad \text{and} \quad \|H_{\omega, n}\|_{L^s(\mu_{\sigma^n\omega})}\le A(\omega) n^{1/p+1/r+\delta},
		\]
        where $A\in L^\varrho(\Omega, \mathcal F, \mathbb P)$ and $\frac{1}{\varrho}=\frac 1 p+\frac 1 r$.
	\end{lemma}
    \begin{proof}[Proof of the lemma]
%Throughout the proof $C>0$ will denote a generic constant independent of $\omega$ and $n$. 
By~\eqref{dec2} and~\eqref{obsint} we have
		\[
		\|G_{\omega, n}\|_{L^s(\mu_{\sigma^n\omega})}\le \sum_{j=0}^{n-1}\|L_{\sigma^j \omega}^{n-j}(\varphi_{\sigma^j \omega})\|_{L^s(\mu_{\sigma^n\omega})} \le  \sum_{j=0}^{n-1}K(\sigma^j \omega) (1+\|\varphi_{\sigma^j \omega}\|_{\Lip})a_{n-j},
		\]
		for $\mathbb P$-a.e. $\omega \in \Omega$ and $n\in \mathbb N$. Next, since $K\in L^p(\Omega, \mathcal F, \mathbb P)$ and~\eqref{obsint} holds,  by~\cite[Lemma 3]{DH} (applied for $\sigma^{-1}$ instead of $\sigma$), there are $K_\delta\in L^p(\Omega, \mathcal F, \mathbb P)$  and $D_\delta \in L^r(\Omega, \mathcal F, \mathbb P)$ such that 
		\[
		K(\sigma^j \omega)\le K_\delta(\omega)(j+1)^{1/p+\delta/2} \quad  \text{and} \quad 1+\|\varphi_{\sigma^j\omega}\|_{\Lip}\le D_\delta(\omega)(j+1)^{1/r+\delta/2},
\]
for $\mathbb P$-a.e. $\omega \in \Omega$ and $j\in \mathbb N_0$.
		Hence, there exist $C>0$ independent of $\omega$ and $n$ such that 
		\[
		\begin{split}
			\|G_{\omega, n}\|_{L^s(\mu_{\sigma^n\omega})}\le K_\delta(\omega)D_\delta(\omega)\sum_{j=0}^{n-1}(j+1)^{1/p+1/r+\delta}a_{n-j}&\le K_\delta (\omega) D_\delta(\omega) n^{1/p+1/r+\delta}\sum_{j=0}^{n-1}a_{n-j} \\
			&\le CK_\delta(\omega)D_\delta (\omega) n^{1/p+1/r+\delta},
		\end{split}
		\]
		for $\mathbb P$-a.e. $\omega \in \Omega$ and $n\in \mathbb N$, as $\sum_{j=1}^\infty a_j<\infty$ (see~\eqref{529c}). This establishes the first assertion of the lemma. The second assertion follows from the first as~\eqref{mdec} gives\[
		\|H_{\omega, n}\|_{L^s(\mu_{\sigma^n \omega})}\le \|\varphi_{\sigma^n \omega}\|_{\Lip}+\|G_{\omega, n}\|_{L^s(\mu_{\sigma^n \omega})}+\|G_{\omega, n+1}\|_{L^s(\mu_{\sigma^{n+1}\omega})},
		\]
and by applying~\cite[Lemma 3]{DH} to $\omega \mapsto \|\varphi_\omega\|_{\text{Lip}}\in L^r(\Omega, \mathcal F, \mathbb P)$.
\end{proof}
We now return to the proof of the theorem.
Taking into account~\eqref{bsums} and the Lemma~\ref{GM}, in order to establish the desired conclusion,  it is sufficient to obtain the upper bounds for the partial sums $S_n^\omega H:=\sum_{j=0}^{n-1}H_{\omega,j}\circ T_\omega^j$. By applying the Burkholder inequality we see that 
$$
\|S_n^\omega H\|_{L^{s}(\mu_\omega)}^2 \le C_s\sum_{j=0}^{n-1}\|H_{\omega,j}\|_{L^{s}(\mu_{\sigma^j\omega})}^2\le  C_s(A(\omega))^2\sum_{j=1}^n j^{2/p+2/r+\delta}\le C_s(A(\omega))^2 n^{1+ 2/p+2/r+\delta},
$$
for $\mathbb P$-a.e. $\omega \in \Omega$ and $n\in \mathbb N$. Here, $C_s>0$ is a constant that depends only on $s$.
\end{proof}

\subsection{Almost sure invariance principle}
Next, we have the following quenched ASIP. Our proof is based on the Skorokhod embedding theorem, and it closely follows the proof of ASIP in \cite{Su} (with certain modifications to deal with the nonuniformity in~\eqref{DEC}), but to make the paper self-contained, we will provide most of the details.
\begin{theorem}\label{T-ASIP}
Let $\varphi \colon \mathcal M \to \mathbb R$ be as in the statement of Theorem~\ref{T-CLT}.
%a measurable map satisfying the following conditions:
%\begin{itemize}
%\item $\int_{M_\omega}\varphi_\omega \, d\mu_\omega=0$ for $\mathbb P$-a.e. $\omega \in \Omega$, where $\varphi_\omega:=\varphi(\omega, \cdot)$;
%\item for $\mathbb P$-a.e. $\omega \in \Omega$, $\varphi_\omega$ is Lipschitz and~\eqref{obsint} is true for $r>0$.
%\end{itemize}
Furthermore, suppose that Assumption~\ref{P assum} holds  with $\mathcal I=\{2, 4\}$,  $a_{2, n}=O(n^{-a})$ and that 
\begin{equation}\label{314c}
\frac{2}{a}+\frac 4 p+\frac 8 r<1 \quad   (\text{in particular }a>2 \quad \text{so} \quad \sum_{n=1}^\infty a_{2, n}<+\infty ).
\end{equation}
Finally, assume that
\begin{equation}\label{1351c}
\sum_{n=1}^\infty a_{4, n}<+\infty.
\end{equation}
Then there exists $\Sigma^2\ge 0$ such that~\eqref{variance} holds. 
Furthermore, if $\Sigma^2>0$, then for $\mathbb P$-a.e. $\omega \in \Omega$, there is a Brownian motion $B_t^\omega$, $t\ge 0$ defined on some extension of the probability space $(M_\omega, \mathcal B_\omega, \mu_\omega)$ such that
\begin{equation}\label{asipc}
\sum_{k=0}^n \varphi_{\sigma^k \omega}\circ T_\omega^k -B_{\Sigma_{\omega, n}^2}^\omega=O(n^{\frac 1 2(1-\varepsilon')}),
\end{equation}
where $\varepsilon'>0$ is sufficiently small and
\begin{equation}\label{752}
\Sigma_{\omega, n}^2:=\int_{M_\omega} \left (\sum_{k=0}^{n-1}\varphi_{\sigma^k \omega} \circ T_\omega^k\right )^2\, d\mu_\omega.
\end{equation}
\end{theorem}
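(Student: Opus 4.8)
The plan is to run the reverse martingale--coboundary scheme already set up in the proof of Theorem~\ref{MomThm} and then embed its martingale part into a Brownian motion via the Skorokhod embedding theorem, following \cite{Su} but tracking the random multiplicative constants forced by the nonuniformity in \eqref{DEC}. With $M_n^\omega:=\sum_{k=0}^{n-1}H_{\omega,k}\circ T_\omega^k$, \eqref{bsums} expresses the Birkhoff sum as $M_{n+1}^\omega$ plus the coboundary term $G_{\omega,n+1}\circ T_\omega^{n+1}$, and by \eqref{mdiff} the sequence $\{H_{\omega,k}\circ T_\omega^k\}_{k\ge 0}$ is a reverse martingale difference sequence for the decreasing filtration $\mathcal G_k^\omega:=(T_\omega^{k+1})^{-1}(\mathcal B_{\sigma^{k+1}\omega})$. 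First, the hypotheses of Theorem~\ref{T-CLT} with $\mathcal I=\{2\}$ follow from \eqref{314c} (indeed $4/p+8/r<1$ forces $1/p+1/r<1/4\le 1/2$, and $a>2$ gives $\sum_n a_{2,n}<\infty$), so \eqref{variance} holds and produces $\Sigma^2\ge 0$. Next I would dispose of the coboundary: by Lemma~\ref{GM} with $s=4$ we have $\|G_{\omega,n+1}\circ T_\omega^{n+1}\|_{L^4(\mu_\omega)}\le A(\omega)(n+1)^{1/p+1/r+\delta}$ with $A\in L^\varrho(\Omega,\mathcal F,\mathbb P)$, $1/\varrho=1/p+1/r$; since $1/p+1/r<1/4$, fixing $\delta$ small makes this exponent $<1/4$, and then a Markov-plus-Borel--Cantelli argument along a geometrically growing sequence of indices (union bound over blocks) gives $G_{\omega,n+1}\circ T_\omega^{n+1}=o(n^{1/2-\varepsilon})$ $\mu_\omega$-a.e., for $\mathbb P$-a.e.\ $\omega$. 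Hence it suffices to prove \eqref{asipc} for $M_n^\omega$; moreover, using $\|M_n^\omega\|_{L^2(\mu_\omega)}^2=\sum_{k<n}\|H_{\omega,k}\|_{L^2(\mu_{\sigma^k\omega})}^2$ and the same coboundary bound, $\Sigma_{\omega,n}^2=\|M_n^\omega\|_{L^2(\mu_\omega)}^2+o(n)$ for $\mathbb P$-a.e.\ $\omega$.

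For $\mathbb P$-a.e.\ $\omega$, apply the (reverse) Skorokhod embedding theorem to $M_n^\omega$ on an enlargement of $(M_\omega,\mathcal B_\omega,\mu_\omega)$, as in \cite{Su}: one obtains a Brownian motion $W^\omega$ and nonnegative increments $\tau_k^\omega$ such that $M_n^\omega$ has the same distribution as $W^\omega_{T_n^\omega}$, $T_n^\omega:=\sum_{k=0}^{n-1}\tau_k^\omega$, with the conditional mean of $\tau_k^\omega$ equal to $\mathbb E_\omega[(H_{\omega,k}\circ T_\omega^k)^2\mid\mathcal G_{k+1}^\omega]$ and its conditional second moment at most $C\,\mathbb E_\omega[(H_{\omega,k}\circ T_\omega^k)^4\mid\mathcal G_{k+1}^\omega]$. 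The fourth-moment input is precisely Lemma~\ref{GM} with $s=4$, namely $\|H_{\omega,k}\|_{L^4(\mu_{\sigma^k\omega})}\le A(\omega)k^{1/p+1/r+\delta}$; combined with \cite[Lemma~3]{DH} it also yields $\mathbb P$-a.e.\ bounds separating the $\omega$-dependence from an $\omega$-independent polynomial growth in $k$.

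The core is to show $|T_n^\omega-\Sigma_{\omega,n}^2|=o(n^{1-\varepsilon})$ $\mu_\omega$-a.e., for $\mathbb P$-a.e.\ $\omega$. Write $T_n^\omega-\Sigma_{\omega,n}^2=(T_n^\omega-V_n^\omega)+(V_n^\omega-\|M_n^\omega\|_{L^2}^2)+(\|M_n^\omega\|_{L^2}^2-\Sigma_{\omega,n}^2)$, where $V_n^\omega:=\sum_{k=0}^{n-1}\mathbb E_\omega[(H_{\omega,k}\circ T_\omega^k)^2\mid\mathcal G_{k+1}^\omega]$. The last term is $o(n)$ by the previous paragraph. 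The term $T_n^\omega-V_n^\omega$ is a martingale sum with $L^2$-increments $\lesssim\|H_{\omega,k}\|_{L^4}^2\lesssim A(\omega)^2k^{2/p+2/r+2\delta}$, so a G\'al--Koksma / maximal-$L^2$ strong law for martingales (as in \cite{Su}) bounds it by $o(n^\theta)$ a.e.\ for a suitable $\theta<1$ depending on $1/p+1/r$. For the middle term, the transfer-operator identity $\mathbb E_\omega[(H_{\omega,k}\circ T_\omega^k)^2\mid\mathcal G_{k+1}^\omega]=(L_{\sigma^k\omega}(H_{\omega,k}^2))\circ T_\omega^{k+1}$, together with $\mu_{\sigma^{k+1}\omega}(L_{\sigma^k\omega}(H_{\omega,k}^2))=\|H_{\omega,k}\|_{L^2}^2$, turns $V_n^\omega-\|M_n^\omega\|_{L^2}^2$ into a centered sum $\sum_k\bigl(h_{\omega,k}\circ T_\omega^{k+1}-\mu_{\sigma^{k+1}\omega}(h_{\omega,k})\bigr)$ with $h_{\omega,k}:=L_{\sigma^k\omega}(H_{\omega,k}^2)$; expanding $H_{\omega,k}^2$ by \eqref{mdec} and using the cancellation $L_{\sigma^k\omega}(\varphi_{\sigma^k\omega}+G_{\omega,k})=G_{\omega,k+1}$ re-expresses $h_{\omega,k}$ through $\varphi_{\sigma^k\omega}$, $G_{\omega,k}$ and their products, whose decorrelation along the orbit is controlled by \eqref{DEC} in the $L^2$-norm with rate $a_{2,n}=O(n^{-a})$; an $L^2$-estimate of the centered sum followed by Borel--Cantelli, with the random constants tracked via \cite[Lemma~3]{DH}, gives $V_n^\omega-\|M_n^\omega\|_{L^2}^2=o(n^{1-\varepsilon})$. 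Summing the three pieces yields the time-change estimate. Finally, on the event where $|T_n^\omega-\Sigma_{\omega,n}^2|\le n^{1-\varepsilon}$ and $T_n^\omega\le Cn$ for all large $n$, L\'evy's modulus of continuity for Brownian motion gives $|W^\omega_{T_n^\omega}-W^\omega_{\Sigma_{\omega,n}^2}|=O\bigl((n^{1-\varepsilon}\log n)^{1/2}\bigr)=o(n^{\frac12(1-\varepsilon')})$, and transporting $W^\omega$ back to the original space and recombining with the coboundary estimate gives \eqref{asipc}.

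The main obstacle is the middle term in the last step: the conditional variances $h_{\omega,k}\circ T_\omega^{k+1}$ are not Lipschitz observables, so one must argue by hand --- via \eqref{mdec}, the cancellation identity, and \eqref{DEC} --- that they still decorrelate at a usable polynomial rate, and simultaneously one must propagate the nonuniform constants $K(\omega)\in L^p$ and $\|\varphi_\omega\|_{\Lip}\in L^r$ through every estimate without eroding the $\varepsilon$-gains. The balance condition \eqref{314c}, namely $\tfrac2a+\tfrac4p+\tfrac8r<1$, is exactly what makes the coboundary bound, the martingale strong law (through the $L^4$-bounds, hence $\tfrac4p+\tfrac8r$), and the conditional-variance decorrelation (through $a_{2,n}=O(n^{-a})$, hence $\tfrac2a$) all deliver errors of order $o(n^{1-\varepsilon})$ at once.
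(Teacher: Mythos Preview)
Your proof sketch captures the essential ingredients---the martingale--coboundary decomposition, the Skorokhod embedding, the $L^4$ control via Lemma~\ref{GM}, and the crucial estimate on the quadratic variation difference $\sum_k\bigl(H_{\omega,k}^2\circ T_\omega^k-\mathbb E_{\sigma^k\omega}[H_{\omega,k}^2]\bigr)$---but the way you set up the embedding differs from the paper, and this is not purely cosmetic.

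The paper follows \cite{Su} literally: rather than embedding $M_n^\omega$ directly, it embeds the \emph{normalized tail sums} $R_{\omega,n}:=\sum_{k\ge n}\sigma_{\omega,k}^{-2}H_{\omega,k}\circ T_\omega^k$, which form a reverse martingale tending to zero. The Skorokhod times $\tau_n^\omega$ are then \emph{decreasing} and converge to $0$, so the Brownian increments are controlled by H\"older continuity of $B$ near the origin rather than by L\'evy's modulus on $[0,Cn]$. The key estimate becomes $\tau_n-\delta_{\omega,n}^2=o(\delta_{\omega,n}^{2+\varepsilon_0})$ (Lemmas~\ref{Lemma 4.3} and~\ref{Lemma 13}), and a summation-by-parts identity converts this into the ASIP for $\sum_k H_{\omega,k}\circ T_\omega^k$. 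Your direct route---embed $M_n^\omega$, bound $|T_n^\omega-\Sigma_{\omega,n}^2|$, apply L\'evy's modulus---is the more classical Philipp--Stout framing, but for \emph{reverse} martingale differences it is not immediate how to manufacture increasing stopping times adapted to the right (decreasing) filtration; Su's tail-sum construction is precisely the device that makes the embedding clean here, and your sketch does not explain how to circumvent it.

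That said, the hard analytic core is the same in both versions: one must show $S'_{\omega,n}:=\sum_{j<n}\bigl(H_{\omega,j}^2\circ T_\omega^j-\mathbb E_{\sigma^j\omega}[H_{\omega,j}^2]\bigr)=o(\sigma_{\omega,n}^{2(1-\varepsilon')})$, which corresponds to your ``middle term'' plus pieces of the others. The paper does this by the five-term decomposition \eqref{term1}--\eqref{term5}. The delicate term is \eqref{term5}, where the non-Lipschitz products $\varphi_{\sigma^i\omega}G_{\omega,i}$ must be decorrelated; this is exactly where the full two-function estimate \eqref{DEC} (not merely \eqref{dec2}) is used, together with a splitting of the inner sum at $j-j^{1/a}$ in the style of \cite{NTV}, to obtain $\|U_{\omega,n}-U_{\omega,m}\|_{L^2(\mu_\omega)}^2\precsim\sum_{m\le j\le n}j^{1/a+2/p+4/r+\delta}$. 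Your sketch flags this obstacle but stops short of the mechanism; it is here that the balance condition \eqref{314c} is actually consumed (yielding simultaneous solvability of \eqref{eq:cond_w_1} and \eqref{eq:cond_w_2}), so this step is not optional bookkeeping but the crux of the argument.
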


 We observe that the existence of $\Sigma^2\ge 0$ and~\eqref{variance} follow directly from Theorem~\ref{T-CLT} as~\eqref{314c} implies~\eqref{gammaq}. 
	
    Following~\cite{Su}, the rest of the proof of Theorem~\ref{T-ASIP} will be divided into several parts.
		Let $G_{\omega, n}$ and $H_{\omega, n}$ be given by~\eqref{eq:G} and~\eqref{mdec}, respectively.

        \begin{lemma}\label{434}
		For $\delta>0$ and $\mathbb P$-a.e. $\omega \in \Omega$, 
		\[
		\sum_{k\le n}\int_{M_\omega} H_{\omega, k}^2 \circ T_\omega^k\, d\mu_\omega =\sum_{k\le n}\int_{M_\omega} H_{\omega, k}^2\, d\mu_{\sigma^k \omega}=\Sigma_{\omega, n}^2+O(n^{2/p+2/r+\delta}).
		\]
	\end{lemma}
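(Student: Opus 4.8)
The plan is to read off the identity from the martingale--coboundary decomposition \eqref{bsums} together with the reverse martingale relation \eqref{mdiff}, and then to absorb the discrepancy into the error term via the $L^s$--bounds of Lemma~\ref{GM}. Fix $\delta>0$ and a generic $\omega$ for which all quoted almost sure statements hold.

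First I would dispose of the initial equality by equivariance: since $(T_\omega^k)_*\mu_\omega=\mu_{\sigma^k\omega}$, each term satisfies $\int_{M_\omega}H_{\omega,k}^2\circ T_\omega^k\,d\mu_\omega=\int_{M_{\sigma^k\omega}}H_{\omega,k}^2\,d\mu_{\sigma^k\omega}$. For the main equality, abbreviate $d_k:=H_{\omega,k}\circ T_\omega^k$ and $R_n:=G_{\omega,n}\circ T_\omega^n$; the version of \eqref{bsums} whose index range matches \eqref{752} reads $\sum_{k=0}^{n-1}\varphi_{\sigma^k\omega}\circ T_\omega^k=\sum_{k=0}^{n-1}d_k+R_n$, hence $\Sigma_{\omega,n}^2=\bigl\|\sum_{k=0}^{n-1}d_k+R_n\bigr\|_{L^2(\mu_\omega)}^2$. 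The key structural input is that $(d_k)$ is a reverse martingale difference sequence: setting $\mathcal G_k:=(T_\omega^{k+1})^{-1}(\mathcal B_{\sigma^{k+1}\omega})$, the filtration $(\mathcal G_k)_k$ is decreasing (because $T_\omega^{k+2}=T_{\sigma^{k+1}\omega}\circ T_\omega^{k+1}$), $d_k$ is $\mathcal G_{k-1}$--measurable, $d_k$ is in fact $\mathcal G_j$--measurable for every $k>j$ (factor $T_\omega^k$ through $T_\omega^{j+1}$), and $R_n$, being $\mathcal G_{n-1}$--measurable, is $\mathcal G_k$--measurable for every $k\le n-1$. Since $\mathbb{E}_\omega[d_k\mid\mathcal G_k]=0$ by \eqref{mdiff}, conditioning gives $\mathbb{E}_\omega[d_jd_k]=\mathbb{E}_\omega[d_k\,\mathbb{E}_\omega[d_j\mid\mathcal G_j]]=0$ for $j\neq k$ and $\mathbb{E}_\omega[d_kR_n]=\mathbb{E}_\omega[R_n\,\mathbb{E}_\omega[d_k\mid\mathcal G_k]]=0$ for $k\le n-1$; all functions involved lie in $L^2(\mu_\omega)$ by Lemma~\ref{GM} (the boundary case $k=0$ being handled directly from \eqref{dec2} applied to the zero--mean $\varphi_\omega$), so the cross terms genuinely vanish and
\[
\Sigma_{\omega,n}^2=\sum_{k=0}^{n-1}\int_{M_\omega}H_{\omega,k}^2\,d\mu_{\sigma^k\omega}+\|G_{\omega,n}\|_{L^2(\mu_{\sigma^n\omega})}^2 .
\]

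To finish, I would invoke Lemma~\ref{GM} with $s=2$ (available since Assumption~\ref{P assum} holds with $\mathcal I=\{2,4\}$ and $\sum_n a_{2,n}<\infty$ by \eqref{314c}, applying the lemma with exponent $\delta/2$) to bound both $\|G_{\omega,n}\|_{L^2(\mu_{\sigma^n\omega})}^2$ and the missing term $\|H_{\omega,n}\|_{L^2(\mu_{\sigma^n\omega})}^2=\int_{M_\omega}H_{\omega,n}^2\,d\mu_{\sigma^n\omega}$ by $A(\omega)^2 n^{2/p+2/r+\delta}$; adding the $k=n$ term to the sum and transposing $\|G_{\omega,n}\|_{L^2(\mu_{\sigma^n\omega})}^2$ yields $\sum_{k\le n}\int_{M_\omega}H_{\omega,k}^2\,d\mu_{\sigma^k\omega}=\Sigma_{\omega,n}^2+O(n^{2/p+2/r+\delta})$, as claimed.

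The step I expect to be the main obstacle is pinning down the \emph{exact} cancellation of the cross term $\bigl\langle\sum_{k<n}d_k,\,R_n\bigr\rangle$: this is precisely what keeps the error at the sublinear level $\|G_{\omega,n}\|_{L^2}^2$, whereas a naive Cauchy--Schwarz bound would produce a term of order $\Sigma_{\omega,n}\cdot\|G_{\omega,n}\|_{L^2}=O(n^{1/2+\cdots})$, far too large. Everything hinges on correctly identifying the \emph{decreasing} filtration $(\mathcal G_k)$ and verifying that $R_n$ is measurable with respect to every $\mathcal G_k$ with $k\le n-1$ — routine, but easy to get backwards.
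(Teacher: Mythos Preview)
Your proof is correct and follows essentially the same approach as the paper: expand the square via \eqref{bsums}, use the reverse martingale property \eqref{mdiff} to kill the cross terms, and bound the residual $\|G_{\omega,n}\|_{L^2}^2$ (and the stray $\|H_{\omega,n}\|_{L^2}^2$) via Lemma~\ref{GM} with $s=2$. The paper compresses all of this into a single displayed identity and an appeal to Lemma~\ref{GM}; you have simply spelled out the orthogonality argument and the index bookkeeping that the paper leaves implicit.
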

	\begin{proof}
		Using~\eqref{bsums} and~\eqref{mdiff} we have 
		\[
		\Sigma_{\omega, n}^2=\sum_{k\le n}\int_{M_\omega} H_{\omega, k}^2 \circ T_\omega^k\, d\mu_\omega+\|G_{\omega, n+1}\|_{L^2(\mu_{\sigma^{n+1}\omega})}^2
		\]
		The desired conclusion now follows easily from the Lemma~\ref{GM} (applied for $s=2$).
        %as $\|G_{\omega, n+1}\|_{L^2(\mu_{\sigma^{n+1}\omega})}\le \|G_{\omega, n+1}\|_{L^s(\mu_{\sigma^{n+1}\omega})}$.
	\end{proof}
Set	\begin{equation}\label{756}
	\sigma_{\omega, n}^2:=\sum_{k\le n}\int_{M_\omega} H_{\omega, k}^2 \circ T_\omega^k\, d\mu_\omega.
	\end{equation}
    The following result is a direct consequence of Lemma~\ref{434} and the first inequality in~\eqref{314c}.
	\begin{lemma}\label{Sigma}
		For $\mathbb P$-a.e. $\omega \in \Omega$,
		\[
		\lim_{n\to \infty}\frac{\sigma_{\omega, n}^2}{n}=\Sigma^2.
		\]
	\end{lemma}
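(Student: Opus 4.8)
The plan is to combine Lemma~\ref{434}, which expresses $\sigma_{\omega,n}^2$ in terms of $\Sigma_{\omega,n}^2$ up to a controlled error, with Theorem~\ref{T-CLT}, which already yields $\lim_{n\to\infty}\Sigma_{\omega,n}^2/n=\Sigma^2$ for $\mathbb P$-a.e. $\omega$. Indeed, by the definition~\eqref{756} of $\sigma_{\omega,n}^2$ and Lemma~\ref{434},
\[
\sigma_{\omega,n}^2=\Sigma_{\omega,n}^2+O(n^{2/p+2/r+\delta})
\]
for $\mathbb P$-a.e. $\omega\in\Omega$, where $\delta>0$ is arbitrary. Dividing by $n$ gives
\[
\frac{\sigma_{\omega,n}^2}{n}=\frac{\Sigma_{\omega,n}^2}{n}+O(n^{-1+2/p+2/r+\delta}).
\]

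The only thing to check is that the error term tends to $0$, i.e. that one can choose $\delta>0$ small enough so that $-1+2/p+2/r+\delta<0$. This is exactly where the first inequality in~\eqref{314c} enters: since $2/a+4/p+8/r<1$ and all quantities are positive, a fortiori $4/p+8/r<1$, hence $2/p+2/r\le (4/p+8/r)/2<1/2<1$, so $2/p+2/r<1$ strictly. Therefore we may fix $\delta>0$ with $2/p+2/r+\delta<1$, which makes the $O(n^{-1+2/p+2/r+\delta})$ term vanish as $n\to\infty$. Combining this with $\lim_{n\to\infty}\Sigma_{\omega,n}^2/n=\Sigma^2$ (the content of~\eqref{variance} in Theorem~\ref{T-CLT}, applicable since~\eqref{314c} implies~\eqref{gammaq} as noted) yields $\lim_{n\to\infty}\sigma_{\omega,n}^2/n=\Sigma^2$ for $\mathbb P$-a.e. $\omega$.

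There is no real obstacle here: the statement is an immediate corollary of the two results it cites, and the only mild point is verifying the arithmetic on the exponents so that the remainder is genuinely $o(n)$. One small bookkeeping remark is that the exceptional $\mathbb P$-null set should be taken as the union of the null sets coming from Lemma~\ref{434} and from~\eqref{variance}, which is still null; outside it the convergence holds for every such $\omega$.
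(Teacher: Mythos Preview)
Your proof is correct and matches the paper's own argument: the paper states that the lemma is a direct consequence of Lemma~\ref{434} and the first inequality in~\eqref{314c}, which is exactly what you spell out (together with the use of~\eqref{variance}, whose applicability the paper has already noted just before this lemma).
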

    \begin{lemma}\label{Lemma 4.1}
 For $\mathbb P$-a.e. $\omega \in \Omega$,
\begin{equation}\label{450-1}
\lim_{n\to \infty}\frac{\sigma_{\omega, n+1}^2}{\sigma_{\omega, n}^2}=1 \quad \text{and} \quad \lim_{n\to \infty}\frac{\int_{M_\omega} R_{\omega, n}^2\, d\mu_\omega}{\sigma_{\omega, n}^{-2}}=1,
\end{equation}
where 
\[
R_{\omega, n}:=\sum_{k\ge n}\frac{H_{\omega, k}\circ T_\omega^k}{\sigma_{\omega, k}^2}.
\]
Furthermore, $(R_{\omega, n})_n$ is a reverse martingale with respect to filtration $((T_\omega^{n})^{-1}(\mathcal B_{\sigma^n \omega}))_{n}$.
\end{lemma}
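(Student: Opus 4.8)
The plan is to build everything on the reverse‑martingale‑difference identity~\eqref{mdiff}, the crude size bound of Lemma~\ref{GM}, and the asymptotics $\sigma_{\omega,n}^2\sim\Sigma^2 n$ supplied by Lemma~\ref{Sigma} (recall we are in the case $\Sigma^2>0$). Fix $\delta>0$ small enough that $2/p+2/r+2\delta<1$, which is possible since~\eqref{314c} forces $2/p+2/r<1$. First I would record the structural fact that the filtration $\mathcal G_n:=(T_\omega^n)^{-1}(\mathcal B_{\sigma^n\omega})$ is \emph{decreasing}, because $T_\omega^{n+1}=T_{\sigma^n\omega}\circ T_\omega^n$; in particular $H_{\omega,j}\circ T_\omega^j$ is $\mathcal G_{k+1}$‑measurable whenever $j\ge k+1$. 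Conditioning on $\mathcal G_{k+1}$ and using~\eqref{mdiff} then gives $\int_{M_\omega}(H_{\omega,k}\circ T_\omega^k)(H_{\omega,j}\circ T_\omega^j)\,d\mu_\omega=0$ for all $j>k$, so $\{H_{\omega,k}\circ T_\omega^k\}_k$ is an orthogonal system in $L^2(\mu_\omega)$ (each member lies in $L^2$ by Lemma~\ref{GM} with $s=2$).

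Next I would check that $R_{\omega,n}$ is well defined and is a reverse martingale. By orthogonality and Lemma~\ref{GM},
\[
\Bigl\|\sum_{k=n}^{N}\frac{H_{\omega,k}\circ T_\omega^k}{\sigma_{\omega,k}^2}\Bigr\|_{L^2(\mu_\omega)}^2=\sum_{k=n}^{N}\frac{\|H_{\omega,k}\|_{L^2(\mu_{\sigma^k\omega})}^2}{\sigma_{\omega,k}^4}\le A(\omega)^2\sum_{k=n}^{N}\frac{k^{2/p+2/r+2\delta}}{\sigma_{\omega,k}^4},
\]
and since $\sigma_{\omega,k}^2\ge c(\omega)k$ for large $k$ (Lemma~\ref{Sigma}, $\Sigma^2>0$) the general term is $O(k^{2/p+2/r+2\delta-2})$, which is summable; hence the partial sums are Cauchy in $L^2(\mu_\omega)$ and $R_{\omega,n}$ exists as an $L^2$‑limit (any $k$ with $\sigma_{\omega,k}^2=0$ occur for finitely many indices only and are read as $0$). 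Since each $\sigma_{\omega,k}^2$ is a deterministic scalar, $R_{\omega,n+1}$ is $\mathcal G_{n+1}$‑measurable and $R_{\omega,n}=R_{\omega,n+1}+\sigma_{\omega,n}^{-2}(H_{\omega,n}\circ T_\omega^n)$, so~\eqref{mdiff} yields $\mathbb E_\omega[R_{\omega,n}\mid\mathcal G_{n+1}]=R_{\omega,n+1}$, i.e.\ $(R_{\omega,n})_n$ is a reverse martingale for the decreasing filtration $(\mathcal G_n)_n$.

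For the first limit in~\eqref{450-1}, note from~\eqref{756} that $\sigma_{\omega,n+1}^2-\sigma_{\omega,n}^2=\|H_{\omega,n+1}\|_{L^2(\mu_{\sigma^{n+1}\omega})}^2\le A(\omega)^2(n+1)^{2/p+2/r+2\delta}$ while $\sigma_{\omega,n}^2\ge c(\omega)n$, so $\sigma_{\omega,n+1}^2/\sigma_{\omega,n}^2=1+O(n^{2/p+2/r+2\delta-1})\to 1$. For the second limit, orthogonality gives $\int_{M_\omega}R_{\omega,n}^2\,d\mu_\omega=\sum_{k\ge n}\sigma_{\omega,k}^{-4}\|H_{\omega,k}\|_{L^2(\mu_{\sigma^k\omega})}^2=\sum_{k\ge n}\sigma_{\omega,k}^{-4}(\sigma_{\omega,k}^2-\sigma_{\omega,k-1}^2)$. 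Writing $s_k:=\sigma_{\omega,k}^2\uparrow\infty$ and using $s_{k-1}/s_k\to1$ (the first limit), a telescoping sandwich ---namely $(s_k-s_{k-1})/s_k^2\le 1/s_{k-1}-1/s_k$ always, and $(s_k-s_{k-1})/s_k^2\ge(1-\varepsilon)(1/s_{k-1}-1/s_k)$ once $s_{k-1}\ge(1-\varepsilon)s_k$--- gives $(1-\varepsilon)/s_{n-1}\le\sum_{k\ge n}(s_k-s_{k-1})/s_k^2\le 1/s_{n-1}$ for $n$ large; multiplying by $s_n$ and letting $n\to\infty$ (using $s_n/s_{n-1}\to1$) yields $\sigma_{\omega,n}^2\int_{M_\omega}R_{\omega,n}^2\,d\mu_\omega\to1$, which is exactly the claimed limit since $\sigma_{\omega,n}^{-2}=s_n^{-1}$.

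Most of this is routine; the two points that genuinely need care are the orthogonality of $\{H_{\omega,k}\circ T_\omega^k\}$ together with the $L^2$‑convergence of the series defining $R_{\omega,n}$---both hinging on keeping the exponent $2/p+2/r+2\delta$ strictly below $1$, as permitted by~\eqref{314c}---and the telescoping estimate in the last step, where one must watch the direction of the inequalities and the index shift.
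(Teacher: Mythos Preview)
Your argument is correct and is essentially the natural proof the paper defers to (the paper invokes \cite[Lemma 4.1]{Su} for the second limit and the reverse-martingale claim, and cites Lemma~\ref{Sigma} for the first limit). The only cosmetic difference is that the paper obtains $\sigma_{\omega,n+1}^2/\sigma_{\omega,n}^2\to 1$ directly from $\sigma_{\omega,n}^2/n\to\Sigma^2$ rather than by bounding the increment via Lemma~\ref{GM}; both routes are fine.
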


\begin{proof}
The first equality in~\eqref{450-1} follows readily from Lemma~\ref{Sigma}, while the other conclusions follow by repeating the arguments from the proof of~\cite[Lemma 4.1]{Su}.
\end{proof}
Next, the following result is a consequence of the Skorokhod embedding theorem for $(R_{\omega,n})_n$.
\begin{lemma}\label{Lemma 4.2}
For $\mathbb P$-a.e. $\omega\in\Omega$,
there are constant $C>0$, non-increasing optional times $\tau_n^\omega\to 0$ and a Brownian motion $B_t^\omega$, $t\ge 0$ on an extended probability space of $(M_\omega, \mathcal B_\omega ,\mu_\omega)$ such that:
\vskip0.1cm
\begin{equation}\label{(9)}
R_{\omega,n}=B_{\tau_n^\omega}^\omega,   
\end{equation}

\begin{equation}\label{(10)}
\mathbb E[\tau_{n}^\omega-\tau_{n+1}^\omega|\mathcal G_{\omega,n+1}]=\mathbb E\left[\frac{H_{\omega,n}^2\circ T_\omega^n}{\sigma_{\omega,n}^4}\Bigg|T_\omega^{-(n+1)}\mathcal B_{\sigma^{n+1}\omega}\right ],
\end{equation}
where $\mathcal G_{\omega,n+1}=\sigma\{\tau_i^\omega, \ (T_\omega^{i})^{-1}(\mathcal B_{\sigma^i \omega}): \,i\geq n\}$
and
\begin{equation}\label{(11)}
C^{-1}\mathbb E[(\tau_{n}^\omega-\tau_{n+1}^\omega )^2|\mathcal G_{\omega,n+1}]\leq\left[\frac{H_{\omega,n}^4\circ T_\omega^n}{\sigma_{\omega,n}^8}\Bigg|T_\omega^{-(n+1)}\mathcal B_{\sigma^{n+1}\omega}\right]\leq C\mathbb E[(\tau_{n}^\omega-\tau_{n+1}^\omega)^2|\mathcal G_{\omega,n+1}],
\end{equation}
where $C>0$ depends only on $\omega$.
\end{lemma}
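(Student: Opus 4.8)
The plan is to fix $\bP$-a.e.\ $\omega$ and note that, once $\omega$ is fixed, the entire statement is a purely probabilistic assertion about the single space $(M_\omega,\mathcal B_\omega,\mu_\omega)$ and the reverse martingale $(R_{\omega,n})_n$ living on it; hence it suffices to verify the hypotheses of a Skorokhod embedding theorem for reverse martingales and then quote (or reproduce) the embedding exactly as in \cite[Lemma 4.2]{Su}. Concretely, I would work with $\omega$ in the intersection of the (countably many) full-measure sets on which Lemmas~\ref{GM}, \ref{434}, \ref{Sigma} and \ref{Lemma 4.1} hold and on which \eqref{mdiff} holds, and I would restrict to the regime $\Sigma^2>0$ of Theorem~\ref{T-ASIP}, so that $\sigma_{\omega,n}^2/n\to\Sigma^2>0$ by Lemma~\ref{Sigma}. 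Write $d_{\omega,n}:=H_{\omega,n}\circ T_\omega^n/\sigma_{\omega,n}^2$, so that $R_{\omega,n}-R_{\omega,n+1}=d_{\omega,n}$. By \eqref{mdiff} (equivalently, by the reverse-martingale property in Lemma~\ref{Lemma 4.1}) one has $\bE_\omega[d_{\omega,n}\mid (T_\omega^{n+1})^{-1}(\mathcal B_{\sigma^{n+1}\omega})]=0$, i.e.\ $(d_{\omega,n})_n$ is a sequence of reverse-martingale differences for the decreasing filtration $\mathcal F_n:=(T_\omega^n)^{-1}(\mathcal B_{\sigma^n\omega})$. Applying Lemma~\ref{GM} with $s=4$ (legitimate since $\mathcal I=\{2,4\}$ and $\sum_n a_{4,n}<\infty$ by \eqref{1351c}) together with $\sigma_{\omega,n}^2\asymp n$ gives $\|d_{\omega,n}\|_{L^4(\mu_\omega)}=O(n^{-1+1/p+1/r+\delta})$ for every $\delta>0$; in particular each $d_{\omega,n}$ lies in $L^4(\mu_\omega)$, and since $2/p+2/r<1$ by \eqref{314c} we get $\sum_n\|d_{\omega,n}\|_{L^2(\mu_\omega)}^2<\infty$.

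Next I would invoke the Skorokhod embedding theorem for $L^2$-bounded reverse-martingale differences on $(M_\omega,\mathcal B_\omega,\mu_\omega)$, in the form used in \cite[Lemma 4.2]{Su}: on an extension of $(M_\omega,\mathcal B_\omega,\mu_\omega)$ there is a standard Brownian motion $B^\omega$ and, built inductively over $n$, an optional time $\tau_n^\omega$ (with respect to the decreasing filtration $\mathcal G_{\omega,n}=\sigma\{\tau_i^\omega,\mathcal F_i:\,i\ge n\}$) such that the Brownian increment over $[\tau_{n+1}^\omega,\tau_n^\omega]$ realizes $d_{\omega,n}$ conditionally on $\mathcal G_{\omega,n+1}$. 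In other words, given the construction down to index $n+1$, one runs $B^\omega$ from time $\tau_{n+1}^\omega$ up to the Skorokhod stopping time whose conditional law, given $\mathcal G_{\omega,n+1}$, is that of $d_{\omega,n}$ given $\mathcal F_{n+1}$, enlarging the space as needed to couple $B_{\tau_n^\omega}^\omega-B_{\tau_{n+1}^\omega}^\omega$ to $d_{\omega,n}$ itself. Telescoping yields \eqref{(9)}, $R_{\omega,n}=B_{\tau_n^\omega}^\omega$; the mean identity for the Skorokhod stopping time, together with $d_{\omega,n}^2=H_{\omega,n}^2\circ T_\omega^n/\sigma_{\omega,n}^4$, gives \eqref{(10)}; and the two-sided fourth-moment estimate for stopped Brownian motion --- one direction from Burkholder--Davis--Gundy, the other the standard fourth-moment bound for the Skorokhod stopping time, both with universal constants --- gives \eqref{(11)} with a constant $C$ that may be taken independent of $n$.

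It then remains to check that $\tau_n^\omega$ is non-increasing and that $\tau_n^\omega\to0$. That $\tau_n^\omega$ is non-increasing is immediate because the increments $\tau_n^\omega-\tau_{n+1}^\omega$ are nonnegative by construction, so $\tau_n^\omega=\sum_{k\ge n}(\tau_k^\omega-\tau_{k+1}^\omega)$; taking $\bE$ and using \eqref{(10)} gives $\bE[\tau_0^\omega]=\sum_{k\ge0}\|d_{\omega,k}\|_{L^2(\mu_\omega)}^2<\infty$, whence the series converges a.s.\ and $\tau_n^\omega\downarrow0$, consistently with $R_{\omega,n}\to0$ (which holds by Lemma~\ref{Lemma 4.1}). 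All remaining details are those of \cite[Lemma 4.2]{Su}.

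I expect the main obstacle to be the reverse-time character of the embedding: unlike the classical forward Skorokhod construction, here the stopping times must be produced so as to be \emph{non-increasing} and to decrease to $0$, which forces one either to embed the finite reverse martingales $(R_{\omega,n})_{0\le n\le N}$ after a time reversal and pass to a consistent limit as $N\to\infty$, or to build the embedding directly ``from infinity''. A secondary point, and the place where this setting genuinely departs from \cite{Su}, is ensuring that the bounds from Lemma~\ref{GM} are uniform enough in $n$: because of the non-stationarity in \eqref{DEC} the relevant random constants grow only polynomially a.s.\ (via \cite[Lemma 3]{DH}), and one must check that this polynomial growth is reconciled with $\sigma_{\omega,n}^2\asymp n$ so that the embedded increments still have summable conditional variances and controlled fourth moments.
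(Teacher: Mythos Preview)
Your proposal is correct and follows exactly the paper's approach: the paper simply states that the lemma ``is a consequence of the Skorokhod embedding theorem for $(R_{\omega,n})_n$'' without giving any further proof, so your plan to fix $\omega$, verify the reverse-martingale and moment hypotheses via Lemmas~\ref{GM} and~\ref{Lemma 4.1}, and then invoke the embedding exactly as in \cite[Lemma 4.2]{Su} is precisely what is intended. If anything, you have supplied considerably more detail than the paper itself.
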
 
In order to simplify the notation, in the sequel we will write $B_t$ and $\tau_n$ instead of $B_t^\omega$ and $\tau_n^\omega$, respectively. 
Next, we need the following result.
\begin{lemma}\label{Lemma 4.3}
 Let $\delta_{\omega,n}^2=\int_{M_\omega} R_{\omega,n}^2\, d\mu_ \omega$. For $\mathbb P$-a.e. $\omega \in \Omega$, the following holds: if there is $\varepsilon_0>0$ such that 
$$
\tau_n-\delta_{\omega,n}^2=o(\delta_{\omega,n}^{2+\varepsilon_0})
$$
then there is a small $\varepsilon'>0$ such that
$$
\left|\sum_{i\leq n}H_{\omega,i}\circ T_\omega^i-\sum_{i\leq n}(B_{\delta_{\omega,i}^2}-B_{\delta_{\omega,i+1}^2})\sigma_{\omega,i}^2\right|=o(\sigma_{\omega,n}^{1-\varepsilon'}) \quad \text{$\mu_\omega$-a.e.}
$$
\end{lemma}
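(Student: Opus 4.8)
The plan is to pass from the random times $\tau_i$ to the deterministic times $\delta_{\omega,i}^2$ inside the Brownian increments and then to absorb the resulting error through two successive Abel summations. First I would record the elementary identity underlying the statement: by the definition of $R_{\omega,n}$ one has $R_{\omega,i}-R_{\omega,i+1}=\sigma_{\omega,i}^{-2}\,H_{\omega,i}\circ T_\omega^i$, and by~\eqref{(9)} $R_{\omega,i}=B_{\tau_i}$, so $H_{\omega,i}\circ T_\omega^i=(B_{\tau_i}-B_{\tau_{i+1}})\sigma_{\omega,i}^2$ and hence $\sum_{i\le n}H_{\omega,i}\circ T_\omega^i=\sum_{i\le n}(B_{\tau_i}-B_{\tau_{i+1}})\sigma_{\omega,i}^2$. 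Writing $Z_i:=B_{\tau_i}-B_{\delta_{\omega,i}^2}$, the quantity to be bounded is precisely
\[
\sum_{i\le n}(Z_i-Z_{i+1})\,\sigma_{\omega,i}^2=Z_1\sigma_{\omega,1}^2-Z_{n+1}\sigma_{\omega,n}^2+\sum_{i=2}^{n}Z_i\bigl(\sigma_{\omega,i}^2-\sigma_{\omega,i-1}^2\bigr),
\]
by summation by parts. (One sums only over $i\ge i_0(\omega)$ with $\sigma_{\omega,i_0-1}^2>0$; the finitely many omitted terms contribute $O_\omega(1)$.)

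Second, I would assemble the asymptotics, all valid because $\Sigma^2>0$ in this part of the theorem. From Lemma~\ref{Sigma}, $\sigma_{\omega,n}^2\sim\Sigma^2 n$, and from the second limit in~\eqref{450-1}, $\delta_{\omega,n}^2\sim\sigma_{\omega,n}^{-2}\sim(\Sigma^2 n)^{-1}$, so $\delta_{\omega,n}^2\asymp n^{-1}$; moreover $(\sigma_{\omega,i}^2)_i$ is nondecreasing, since $\sigma_{\omega,i}^2-\sigma_{\omega,i-1}^2=\|H_{\omega,i}\|_{L^2(\mu_{\sigma^i\omega})}^2\ge0$, and $\sum_{i\le n}(\sigma_{\omega,i}^2-\sigma_{\omega,i-1}^2)=O(n)$. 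Combining $\delta_{\omega,n}^2\asymp n^{-1}$ with the hypothesis $\tau_n-\delta_{\omega,n}^2=o(\delta_{\omega,n}^{2+\varepsilon_0})$ gives $|\tau_n-\delta_{\omega,n}^2|=o(n^{-(1+\varepsilon_0/2)})$, and since $\tau_n,\delta_{\omega,n}^2\to0$, Lévy's modulus of continuity for $B$ on a fixed compact interval gives, for $\mathbb P$-a.e. $\omega$ and all large $i$,
\[
|Z_i|\le C_\omega\sqrt{|\tau_i-\delta_{\omega,i}^2|\,\log\tfrac{1}{|\tau_i-\delta_{\omega,i}^2|}}=o\bigl(i^{-\alpha}\bigr),
\]
for some $\alpha=\alpha(\varepsilon_0)$ that can be taken arbitrarily close to $\tfrac12+\tfrac{\varepsilon_0}{4}$ from below (absorbing the logarithm into the exponent). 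There is no loss in assuming $\varepsilon_0\le1$, so one may fix $\alpha\in(\tfrac12,1)$ and set $\varepsilon':=2\alpha-1\in(0,1)$.

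Third, I would bound the three terms in the summation by parts. The first is $O_\omega(1)$ and the second is $|Z_{n+1}|\,\sigma_{\omega,n}^2=o(n^{-\alpha})\cdot O(n)=o(n^{1-\alpha})$. For $\sum_{i=2}^n Z_i(\sigma_{\omega,i}^2-\sigma_{\omega,i-1}^2)$, the key point is \emph{not} to bound each increment $\sigma_{\omega,i}^2-\sigma_{\omega,i-1}^2$ by the crude $O(i^{2/p+2/r+\delta})$ coming from Lemma~\ref{GM}---which would force an unwanted relation between $\varepsilon_0$ and $1/p+1/r$---but to use that these increments are nonnegative with partial sums $O(n)$. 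Passing to a nonincreasing majorant $\tilde g$ with $\tilde g(i)=o(i^{-\alpha})$ and $|Z_i|\le\tilde g(i)$ for large $i$, a second Abel summation against the increments gives
\[
\Bigl|\sum_{i=2}^n Z_i\bigl(\sigma_{\omega,i}^2-\sigma_{\omega,i-1}^2\bigr)\Bigr|\le O_\omega(1)+C\Bigl(n\,\tilde g(n)+\sum_{i\le n}\tilde g(i)\Bigr)=o\bigl(n^{1-\alpha}\bigr),
\]
since $\sum_{i\le n}\tilde g(i)=o(n^{1-\alpha})$ when $\alpha<1$. Hence the whole quantity is $o(n^{1-\alpha})$, and as $\sigma_{\omega,n}^{1-\varepsilon'}\asymp n^{(1-\varepsilon')/2}=n^{1-\alpha}$ this is $o(\sigma_{\omega,n}^{1-\varepsilon'})$ on the extension carrying $B$, i.e. $\mu_\omega$-a.e.

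I expect the main obstacle to be this last estimate in the presence of the nonuniformity in~\eqref{DEC}. In the uniform setting of~\cite{Su} the martingale differences $H_{\omega,i}$ have uniformly bounded $L^2$-norm, so $\sigma_{\omega,i}^2-\sigma_{\omega,i-1}^2=O(1)$ and a single summation by parts is enough; here the relevant $L^s$-norms only grow polynomially, and feeding Lemma~\ref{GM} directly into the increments is too lossy. The remedy---exploiting the telescoping bound $\sum_{i\le n}(\sigma_{\omega,i}^2-\sigma_{\omega,i-1}^2)=O(n)$ through a second Abel summation, rather than bounding the increments termwise---is precisely what lets the argument run for \emph{every} $\varepsilon_0>0$. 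A little extra care is also needed to carry the $o(\cdot)$ from the hypothesis through the summation by parts (replacing $|Z_i|$ by a nonincreasing majorant) and to absorb the finitely many initial indices at which $\sigma_{\omega,i}^2$ might vanish.
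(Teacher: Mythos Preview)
Your proof is correct and follows essentially the same route as the paper's (which in turn reproduces \cite[Lemma~4.3]{Su}): represent $H_{\omega,i}\circ T_\omega^i$ via~\eqref{(9)}, perform summation by parts so that only the differences $B_{\tau_i}-B_{\delta_{\omega,i}^2}$ remain, control these by H\"older continuity of Brownian paths near the origin, and then exploit the telescoping $\sum_{i\le n}(\sigma_{\omega,i}^2-\sigma_{\omega,i-1}^2)=\sigma_{\omega,n}^2\sim\Sigma^2 n$ rather than bounding the increments termwise. The only cosmetic differences are that the paper uses a generic H\"older exponent $c<1/2$ where you use L\'evy's modulus and then absorb the logarithm, and that you spell out the second Abel summation against a nonincreasing majorant where the paper simply asserts the bound $|e_{\omega,m,n}|=o(\sigma_{\omega,n}^{2-(2+\varepsilon_0)c})$; your concern about the nonuniformity from~\eqref{DEC} is somewhat misplaced at this particular lemma, since neither Lemma~\ref{GM} nor the size of $\|H_{\omega,i}\|_{L^2}$ enters the argument beyond the asymptotics $\sigma_{\omega,n}^2\sim\Sigma^2 n$ already supplied by Lemma~\ref{Sigma}.
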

\begin{proof}
The proof is identical to the proof of \cite[Lemma 4.3]{Su}, but for the readers' convenience, we will provide the details. Throughout the proof, $\omega$ will belong to a full-measure subset of $\Omega$ on which the conclusions of the previous lemmas hold.
 By Lemma \ref{Lemma 4.1}, $\delta_{\omega,n}^2\approx \sigma_{\omega,n}^{-2}$. On the other hand,  Lemma~\ref{Lemma 4.2} gives that  
 $$
B_{\tau_i}=R_{\omega,i}=\sum_{k\geq i}\frac{H_{\omega,k}\circ T_\omega^k}{\sigma_{\omega,k}^2}
 $$
 $$
B_{\tau_i}-B_{\tau_{i+1}}=\frac{H_{\omega,i}\circ T_\omega^i}{\sigma_{\omega,i}^2},
 $$
 namely,
 \begin{equation}\label{M rep}
 H_{\omega,i}\circ T_\omega^i=(B_{\tau_i}-B_{\tau_{i+1}})\sigma_{\omega,i}^2.    
 \end{equation}
 For $m<n$ write 
$$
\sum_{i\leq n}H_{\omega,i}\circ T_\omega^i=\sum_{i\leq m-1}H_{\omega,i}\circ T_\omega^i+\sum_{m\leq i\leq n}H_{\omega,i}\circ T_\omega^i
=\sum_{i\leq m-1}H_{\omega,i}\circ T_\omega^i+\sum_{m\leq i\leq n}(B_{\tau_i}-B_{\tau_{i+1}})\sigma_{\omega,i}^2
$$
$$
=\sum_{i\leq m-1}H_{\omega,i}\circ T_\omega^i+B_{\delta_{\omega,m}}\sigma_{\omega,m}^2-B_{\delta_{\omega,m}}\sigma_{\omega,m}^2B_{\delta_{\omega,n+1}}\sigma_{\omega,n}^2+\sum_{m+1\leq i\leq n}B_{\delta_{\omega,i}^2}(\sigma_{\omega,i}^2-\sigma_{\omega,i-1}^2)+e_{\omega,m, n},
$$
where
\[
e_{\omega, m, n}:=\sum_{m+1\le i \le n} (B_{\tau_{i}}-B_{\delta_{\omega, i}^2})(\sigma_{\omega, i}^2-\sigma_{\omega, i-1}^2)+(B_{\tau_m}-B_{\delta_{\omega, m}^2})\sigma_{\omega, m}^2-(B_{\tau_{n+1}}-B_{\delta_{\omega, n+1}^2})\sigma_{\omega, n}^2.
\]
By H\"older continuity of Brownian motion near the origin, for any $c<1/2$, fixed $m\gg 1$,
$$
|e_{\omega,m,n}| \leq \sum_{m+1\leq i\leq n}|\tau_i-\delta_{\omega,i}^2|^c(\sigma_{\omega,i}^2-\sigma_{\omega,i-1}^2)+|\tau_m-\delta_{\omega,m}^2|^c\sigma_{\omega,m}^2+|\tau_{n+1}-\delta_{\omega,n}^2|^c\sigma_{\omega,n}^2
$$
$$
\leq \sum_{m+1\leq i\leq n}o(\delta_{\omega,i}^{(2+\varepsilon_0)c})(\sigma_{\omega,i}^2-\sigma_{\omega,i-1}^2)+o(\delta_{\omega,m}^{(2+\varepsilon_0)c})\sigma_{\omega,m}^2+o(\delta_{\omega,n+1}^{(2+\varepsilon_0)c})\sigma_{\omega,n}^2.
$$
 We can choose $c<1/2$ so that $2-(2+\varepsilon_0)c<1$. Then there is small $\varepsilon'>0$ such that
$
2-(2+\varepsilon_0)c<1-\varepsilon'
$ and thus $|e_{\omega,n,m}|=o(\sigma_{\omega,n}^{1-\varepsilon'})$, a.s.
\end{proof}
\begin{lemma}\label{Lemma 13}
The conclusion of Theorem~\ref{T-ASIP} holds if for $\mathbb P$-a.e. $\omega \in \Omega$,
$$
%\gamma < \frac 1 8 \text{ and }\,
\exists \varepsilon_0>0\text{ such that  }\,\tau_{\omega,n}-\delta_{\omega,n}^2=o(\delta_{\omega,n}^{2+\varepsilon_0}).
$$
\end{lemma}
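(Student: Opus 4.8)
The plan is to assemble the conclusion of Theorem~\ref{T-ASIP} from the martingale decomposition~\eqref{bsums}, the Skorokhod embedding of Lemma~\ref{Lemma 4.2}, and the error bound of Lemma~\ref{Lemma 4.3}, under the hypothesis that $\tau_{\omega,n}-\delta_{\omega,n}^2=o(\delta_{\omega,n}^{2+\varepsilon_0})$ for some $\varepsilon_0>0$. First I would recall from~\eqref{bsums} that $S_n^\omega\varphi=\sum_{k\le n}H_{\omega,k}\circ T_\omega^k+G_{\omega,n+1}\circ T_\omega^{n+1}$, and that by Lemma~\ref{GM} (with $s=2$, hence also $\mu_\omega$-a.e.\ after a Borel--Cantelli argument using the fourth moment bound from $\mathcal I=\{2,4\}$ and~\eqref{1351c}) the remainder $G_{\omega,n+1}\circ T_\omega^{n+1}=O(n^{1/p+1/r+\delta})$, which is $o(n^{1/2(1-\varepsilon')})=o(\sigma_{\omega,n}^{1-\varepsilon'})$ for $\delta$ small since~\eqref{314c} forces $1/p+1/r<1/2$. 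So it suffices to control $\sum_{k\le n}H_{\omega,k}\circ T_\omega^k$.

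Next, Lemma~\ref{Lemma 4.3} (whose hypothesis is exactly what we are assuming) gives
\[
\Bigl|\sum_{i\le n}H_{\omega,i}\circ T_\omega^i-\sum_{i\le n}(B_{\delta_{\omega,i}^2}-B_{\delta_{\omega,i+1}^2})\sigma_{\omega,i}^2\Bigr|=o(\sigma_{\omega,n}^{1-\varepsilon'})\quad\text{$\mu_\omega$-a.e.}
\]
The task then reduces to showing that the weighted telescoping sum $\sum_{i\le n}(B_{\delta_{\omega,i}^2}-B_{\delta_{\omega,i+1}^2})\sigma_{\omega,i}^2$ agrees with $B_{\Sigma_{\omega,n}^2}$ up to an $o(\sigma_{\omega,n}^{1-\varepsilon'})$ error. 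For this I would perform Abel summation to rewrite the sum as $\sigma_{\omega,1}^2 B_{\delta_{\omega,1}^2}-\sigma_{\omega,n}^2 B_{\delta_{\omega,n+1}^2}+\sum_{i}B_{\delta_{\omega,i+1}^2}(\sigma_{\omega,i+1}^2-\sigma_{\omega,i}^2)$, and note that $\sigma_{\omega,i}^2\delta_{\omega,i}^2\to 1$ (Lemma~\ref{Lemma 4.1}) together with $\sigma_{\omega,n}^2\sim\Sigma^2 n$ (Lemma~\ref{Sigma}). One then estimates the difference between this expression and $B_{\sigma_{\omega,n}^{-2}\cdot\sigma_{\omega,n}^2}=B_1$-type rescalings; more directly, one shows $\sum_{i\le n}(B_{\delta_{\omega,i}^2}-B_{\delta_{\omega,i+1}^2})\sigma_{\omega,i}^2$ differs from $\sigma_{\omega,n}^2(B_{\delta_{\omega,n}^2}-B_{\delta_{\omega,n+1}^2})+\dots$ and, using the law of the iterated logarithm for $B$ plus $|\sigma_{\omega,i+1}^2-\sigma_{\omega,i}^2|=\|H_{\omega,i}\|_{L^2}^2=O(i^{2/p+2/r+\delta})$ from Lemma~\ref{GM}, that each correction term is negligible on the scale $\sigma_{\omega,n}^{1-\varepsilon'}=n^{1/2-\varepsilon'/2}$. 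Finally, the standard time-change identity for Brownian motion (replacing the piecewise-constant clock by $\Sigma_{\omega,n}^2$, using $\delta_{\omega,n}^2\sim\sigma_{\omega,n}^{-2}$ and Lemma~\ref{434}'s $\sigma_{\omega,n}^2=\Sigma_{\omega,n}^2+O(n^{2/p+2/r+\delta})$ together with $\mathbb E|B_s-B_t|^2=|s-t|$ and Lévy's modulus) converts $B_{\delta_{\omega,n}^2}$-indexed increments into $B_{\Sigma_{\omega,n}^2}$, yielding~\eqref{asipc}.

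The main obstacle I expect is the bookkeeping in the Abel-summation step: one must simultaneously track three sources of error—the Hölder/LIL fluctuations of the Brownian path over the short time intervals $|\delta_{\omega,i}^2-\delta_{\omega,i+1}^2|\sim\sigma_{\omega,i}^{-4}\|H_{\omega,i}\|^2$, the growth $\sigma_{\omega,i}^2\sim i$ of the multiplying weights, and the discrepancy between $\sigma_{\omega,n}^2$ and $\Sigma_{\omega,n}^2$—and verify that in every term the exponents combine (via~\eqref{314c}, which guarantees $2/p+2/r$ is strictly less than what is needed) to stay below $1-\varepsilon'$ after dividing by $\sigma_{\omega,n}^2$. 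This is essentially the content of the corresponding step in~\cite{Su}, and the only genuinely new point here is that the constants $A(\omega)$, $K_\delta(\omega)$, $D_\delta(\omega)$ are random but finite $\mathbb P$-a.e., so all estimates hold on a single full-measure set of $\omega$; no uniformity in $\omega$ is needed because the ASIP is a quenched statement.
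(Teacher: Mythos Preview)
Your first two moves---controlling $G_{\omega,n+1}\circ T_\omega^{n+1}$ via the $L^4$ bound from Lemma~\ref{GM} and Borel--Cantelli, then invoking Lemma~\ref{Lemma 4.3}---match the paper exactly.

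The final step, however, is muddled in your proposal and is handled much more cleanly in the paper. The Abel summation you outline does not help: the boundary term $\sigma_{\omega,n}^2 B_{\delta_{\omega,n+1}^2}$ is of order $\sqrt{n\log\log n}$ by the LIL near $0$, which is \emph{not} $o(\sigma_{\omega,n}^{1-\varepsilon'})=o(n^{(1-\varepsilon')/2})$, so you cannot treat the pieces of your decomposition as individually negligible. Your subsequent mention of a ``time-change identity'' is the right idea but is left vague.

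What the paper does instead is observe the exact variance identity
\[
\sigma_{\omega,n}^2=\sum_{i\le n}\sigma_{\omega,i}^4\bigl(\delta_{\omega,i}^2-\delta_{\omega,i+1}^2\bigr),
\]
which follows immediately from $\delta_{\omega,i}^2-\delta_{\omega,i+1}^2=\mathbb E_{\sigma^i\omega}(H_{\omega,i}^2)/\sigma_{\omega,i}^4$. Since the increments $B_{\delta_{\omega,i}^2}-B_{\delta_{\omega,i+1}^2}$ are independent centered Gaussians with variances $\delta_{\omega,i}^2-\delta_{\omega,i+1}^2$, the weighted sum $\sum_{i\le n}(B_{\delta_{\omega,i}^2}-B_{\delta_{\omega,i+1}^2})\sigma_{\omega,i}^2$ is \emph{exactly} a Brownian motion (on the same extended space, after the standard extension) evaluated at time $\sigma_{\omega,n}^2$. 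No approximation or LIL argument is needed here. One then passes from $\sigma_{\omega,n}^2$ to $\Sigma_{\omega,n}^2$ using Lemma~\ref{434} and H\"older continuity of the Brownian path, which is the only genuine error term in this step.
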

\begin{proof}
 From Lemma \ref{Lemma 4.3} and~\eqref{bsums} 
 %the definition of $G_{\omega,n}$ 
 we have that for $\mathbb P$-a.e. $\omega \in \Omega$ and $n\in \mathbb N$,
 $$
\sum_{i\leq n}\varphi_{\sigma^i\omega}\circ T_\omega^i
=\sum_{i\leq n}(B_{\delta_{\omega,i}^2}-B_{\delta_{\omega,i+1}^2})\sigma_{\omega,i}^2+o(\sigma_{\omega,n}^{1-\varepsilon'})+G_{\omega,n+1}\circ T_\omega^{n+1},
 $$
 where $\varepsilon'>0$ is sufficiently small.
 By Lemma~\ref{GM} applied for $s=4$ (which is possible due to~\eqref{1351c}),  we have \[
 \begin{split}
 \int_{M_\omega}\frac{|G_{\omega, n+1}\circ T_\omega^{n+1}|^4}{\sigma_{\omega, n}^{4(1-\varepsilon')}}\, d\mu_\omega=\frac{\|G_{\omega, n+1}\|_{L^4(\mu_{\sigma^{n+1}\omega})}^4}{\sigma_{\omega, n}^{4(1-\varepsilon')}} %&\le \frac{\|G_{\omega, n+1}\|_{L^s(\mu_{\sigma^{n+1}\omega})}^4}{\sigma_{\omega, n}^{4(1-\varepsilon')}}\\
 &\precsim \frac{n^{4/p+4/r+\delta}}{n^{2(1-\varepsilon')}}\\
 &=\frac{1}{n^{2-4/p-4/r-2\varepsilon'-\delta}},
 \end{split}
 \]
 for any $\delta>0$, $\mathbb P$-a.e. $\omega \in \Omega$ and $n\in \mathbb N$.
 Provided that $\delta$ and $\varepsilon'$ are sufficiently small, it follows from~\eqref{314c} that $\sum_{n=1}^\infty \frac{1}{n^{2-4/p-4/r-2\varepsilon'-\delta}}<+\infty$. By the Borel-Cantelli lemma, we have 
\begin{equation}\label{GT}
G_{\omega, n+1}\circ T_\omega^{n+1}=o(\sigma_{\omega, n}^{1-\varepsilon'}) \quad \text{$\mu_\omega$-a.e.,}
\end{equation}
for $\mathbb P$-a.e. $\omega \in \Omega$.
Hence,
\[
\left |\sum_{i\leq n}\varphi_{\sigma^i\omega}\circ T_\omega^i
-\sum_{i\leq n}(B_{\delta_{\omega,i}^2}-B_{\delta_{\omega,i+1}^2})\sigma_{\omega,i}^2 \right |=o(\sigma_{\omega, n}^{1-\varepsilon'})=o(\Sigma_{\omega, n}^{1-\varepsilon'}), \quad \text{$\mu_\omega$-a.e.,}
\]
for $\mathbb P$-a.e. $\omega \in \Omega$.
Since 
\[
\sigma_{\omega, n}^2=\sum_{i\le n}\sigma_{\omega, i}^4(\delta_{\omega, i}^2-\delta_{\omega, i+1}^2),
\]
it follows that 
\[
\left |\sum_{i\leq n}\varphi_{\sigma^i\omega}\circ T_\omega^i
-B_{\sigma_{\omega,  n}^2}\right |=o(\sigma_{\omega, n}^{1-\varepsilon'})=o(\Sigma_{\omega, n}^{1-\varepsilon'}), \quad \text{$\mu_\omega$-a.e.,}
\]
for $\mathbb P$-a.e. $\omega \in \Omega$.
Taking into account Lemma~\ref{434}, the above implies~\eqref{asipc}.
\end{proof}
Next, we decompose $\tau_{\omega, n}-\delta_{\omega, n}^2$ as in~\cite{Su}:
\[
\tau_{\omega, n}-\delta_{\omega,n}^2=R_{\omega, n}'+R_{\omega, n}''+S_{\omega, n},
\]
where 
\[
R_{\omega, n}':=\sum_{i\ge n}\left (\tau_{\omega, i}-\tau_{\omega, i+1}-\mathbb E_\omega \left [\frac{H_{\omega, i}^2\circ T_\omega^i}{\sigma_{\omega, i}^4}\rvert ( T_\omega^{i+1})^{-1}(\mathcal B_{\sigma^{i+1}\omega})\right ]\right ),
\]
\[
R_{\omega, n}'':=\sum_{i\ge n}\left (\mathbb E_\omega \left [\frac{H_{\omega, i}^2\circ T_\omega^i}{\sigma_{\omega, i}^4}\rvert ( T_\omega^{i+1})^{-1}(\mathcal B_{\sigma^{i+1}\omega})\right ]-\frac{H_{\omega, i}^2\circ T_\omega^i}{\sigma_{\omega, i}^4}\right ),
\]
and 
\[
S_{\omega, n}:=\sum_{i\ge n}\left (\frac{H_{\omega, i}^2\circ T_\omega^i}{\sigma_{\omega, i}^4}-\frac{\mathbb E_{\sigma^i \omega}(H_{\omega, i}^2)}{\sigma_{\omega, i}^4}\right ).
\]
Note that $(R_{\omega, n}')_n$ and $(R_{\omega, n}'')_n$ are reverse martingales with respect to filtrations $(\mathcal G_{\omega, n})_n$ and $( (T_\omega^n)^{-1}(\mathcal B_{\sigma^n \omega}))_n$, respectively. 
\begin{lemma}\label{Lemma 14}
For $\mathbb P$-a.e. $\omega \in \Omega$ and $\varepsilon_0>0$, we have
\[
R_{\omega, n}'=o(\delta_{\omega, n}^{2+\varepsilon_0}) \quad \text{and} \quad R_{\omega, n}''=o(\delta_{\omega, n}^{2+\varepsilon_0}), \quad \text{$\mu_\omega$-a.e.}
\]
\end{lemma}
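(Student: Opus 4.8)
The plan is to exploit the reverse martingale structure of $(R_{\omega,n}')_n$ and $(R_{\omega,n}'')_n$ noted just above, together with Lemma~\ref{GM} (with $s=4$) and the variance asymptotics of Lemma~\ref{Sigma}, to first obtain an $L^2(\mu_\omega)$ decay rate for these tail sums, and then to upgrade it to a $\mu_\omega$-almost sure bound via Doob's maximal inequality and a Borel--Cantelli argument along a geometric subsequence. Throughout I fix $\omega$ in a full-measure set on which the conclusions of all previous lemmas hold; in particular $\Sigma^2>0$, $\sigma_{\omega,n}^2\asymp n$, $\delta_{\omega,n}^2\asymp n^{-1}$, and $\delta_{\omega,n}^2$ is non-increasing in $n$ (the last point because the summands defining $R_{\omega,n}$ are pairwise orthogonal in $L^2(\mu_\omega)$ by~\eqref{mdiff}). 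All constants below are allowed to depend on $\omega$.

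First I would bound the second moments of the martingale increments. For $R_{\omega,n}''$ the increment at level $i$ is $\mathbb{E}_\omega[\tfrac{H_{\omega,i}^2\circ T_\omega^i}{\sigma_{\omega,i}^4}\mid (T_\omega^{i+1})^{-1}\mathcal B_{\sigma^{i+1}\omega}]-\tfrac{H_{\omega,i}^2\circ T_\omega^i}{\sigma_{\omega,i}^4}$, whose $L^2(\mu_\omega)$-norm is at most $\bigl\|\tfrac{H_{\omega,i}^2\circ T_\omega^i}{\sigma_{\omega,i}^4}\bigr\|_{L^2(\mu_\omega)}$; since $\int H_{\omega,i}^4\circ T_\omega^i\,d\mu_\omega=\|H_{\omega,i}\|_{L^4(\mu_{\sigma^i\omega})}^4$, Lemma~\ref{GM} with $s=4$ (legitimate by~\eqref{1351c}) and Lemma~\ref{Sigma} give $\mathbb{E}_{\sigma^i\omega}[H_{\omega,i}^4]/\sigma_{\omega,i}^8\precsim i^{4/p+4/r+\delta-4}$ for every $\delta>0$. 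For $R_{\omega,n}'$ I would use~\eqref{(10)} to rewrite its increment at level $i$ as $(\tau_{\omega,i}-\tau_{\omega,i+1})-\mathbb{E}_\omega[\tau_{\omega,i}-\tau_{\omega,i+1}\mid\mathcal G_{\omega,i+1}]$, so that its $L^2(\mu_\omega)$-norm is at most $\|\tau_{\omega,i}-\tau_{\omega,i+1}\|_{L^2(\mu_\omega)}$, which by~\eqref{(11)} is again controlled by $\mathbb{E}_{\sigma^i\omega}[H_{\omega,i}^4]/\sigma_{\omega,i}^8$. Since~\eqref{314c} forces $4/p+4/r<1$, the exponent $4/p+4/r+\delta-4$ is $<-1$ for $\delta$ small, so by orthogonality of reverse martingale differences $\mathbb{E}_\omega[(R_{\omega,n}')^2]+\mathbb{E}_\omega[(R_{\omega,n}'')^2]\precsim\sum_{i\ge n}i^{4/p+4/r+\delta-4}\precsim n^{4/p+4/r+\delta-3}$.

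Next I would convert this into a pathwise statement. Doob's $L^2$ maximal inequality applied to the reverse martingales $(R_{\omega,m}')_{m\ge n}$ and $(R_{\omega,m}'')_{m\ge n}$ gives $\mathbb{E}_\omega[\sup_{m\ge n}|R_{\omega,m}'|^2]+\mathbb{E}_\omega[\sup_{m\ge n}|R_{\omega,m}''|^2]\precsim n^{4/p+4/r+\delta-3}$. Fix $\varepsilon_0>0$ and $\delta>0$ small enough that $\varepsilon_0+\delta<1-4/p-4/r$ (possible by~\eqref{314c}); since $\delta_{\omega,n}^{-2}\asymp n$, Chebyshev's inequality bounds $\mu_\omega(\sup_{m\ge n}|R_{\omega,m}'|>\delta_{\omega,n}^{2+\varepsilon_0})$ by $\precsim n^{-1+\varepsilon_0+4/p+4/r+\delta}$, and likewise for $R_{\omega,m}''$. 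Evaluating this at $n=n_k:=2^k$ makes it summable in $k$, so Borel--Cantelli gives $\sup_{m\ge n_k}|R_{\omega,m}'|\le\delta_{\omega,n_k}^{2+\varepsilon_0}$ and $\sup_{m\ge n_k}|R_{\omega,m}''|\le\delta_{\omega,n_k}^{2+\varepsilon_0}$ for all large $k$, $\mu_\omega$-a.e. Interpolating between consecutive $n_k$ (using that $\delta_{\omega,\cdot}^2$ is non-increasing and $\delta_{\omega,n_k}^2/\delta_{\omega,n_{k+1}}^2\asymp 2$) then upgrades these to $R_{\omega,n}'=O(\delta_{\omega,n}^{2+\varepsilon_0})$ and $R_{\omega,n}''=O(\delta_{\omega,n}^{2+\varepsilon_0})$ $\mu_\omega$-a.e., and replacing $\varepsilon_0$ by any strictly smaller positive exponent (and using $\delta_{\omega,n}\to0$) turns $O$ into $o$. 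The subsequence step is genuinely needed here, since $\sum_n\mathbb{E}_\omega[(R_{\omega,n}')^2]\,\delta_{\omega,n}^{-2(2+\varepsilon_0)}$ diverges, so one cannot run Borel--Cantelli directly over all $n$.

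The only real subtlety I anticipate is the bookkeeping around the filtrations and indexing conventions: checking that the stated increments really are reverse martingale differences with respect to the correct $\sigma$-algebras (so that the orthogonality used to sum the second moments is valid) and that the relevant tail sums converge as claimed. Since this reverse martingale structure is already stated in the text just before the lemma, and the remaining inputs --- Lemma~\ref{GM} with $s=4$, Lemma~\ref{Sigma}, \eqref{(10)} and \eqref{(11)} --- are all in hand, no essential difficulty should arise; the only thing to watch is that the implied constants depend on $\omega$ (through $A(\omega)$ and $\Sigma$), which is harmless since everything is carried out on a fixed full-measure set of $\omega$.
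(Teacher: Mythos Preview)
Your proposal is correct and follows essentially the same route as the paper: control the $L^2(\mu_\omega)$-norms of the reverse martingale tails via Lemma~\ref{GM} (with $s=4$) and~\eqref{(11)}, apply Doob's maximal inequality, then run Borel--Cantelli along a sparse subsequence and interpolate. The only cosmetic differences are that the paper uses a polynomial subsequence $\lfloor N^w\rfloor$ (with $w$ large enough that $w(1-4/p-4/r-\varepsilon_0-\delta)>1$) instead of your geometric $2^k$, and organizes the tail sum via Abel summation with $K_{\omega,n}=\sum_{i\le n}\mathbb E_{\sigma^i\omega}[H_{\omega,i}^4]$ rather than summing $i^{4/p+4/r+\delta-4}$ directly; both reach the same bound $\precsim n^{-1+\varepsilon_0+4/p+4/r+\delta}$.
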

\begin{proof}
Let $K_{\omega, n}:=\sum_{i\le n}\mathbb E_{\sigma^i \omega}(H_{\omega, i}^4)$. By Lemma~\ref{GM} (for $s=4$), \[K_{\omega, n}=O(n^{1+4/p+4/r+\delta})=O(\sigma_{\omega, n}^{2+8/p+8/r+\delta}),\] for any $\delta>0$. By the martingale maximal inequality and Lemma~\ref{Lemma 4.2}, 
\[
\begin{split}
\mathbb E_\omega \left (\frac{\sup_{i\ge n}|R_{\omega, i}'|}{\delta_{\omega, n}^{2+\varepsilon_0}}\right )^2 &\precsim \frac{\mathbb E_\omega (R_{\omega, n}')^2}{\delta_{\omega, n}^{2(2+\varepsilon_0)}} \precsim \frac{1}{\delta_{\omega, n}^{2(2+\varepsilon_0)}}\sum_{i\ge n}\frac{\mathbb E_{\sigma^i \omega}(H_{\omega, i}^4)}{\sigma_{\omega, i}^8}
=\frac{1}{\delta_{\omega, n}^{2(2+\varepsilon_0)}}\sum_{i\ge n}\frac{K_{\omega, i}-K_{\omega, i-1}}{\sigma_{\omega, i}^8}\\
&=\frac{1}{\delta_{\omega, n}^{2(2+\varepsilon_0)}} \cdot \left (-\frac{K_{\omega, n-1}}{\sigma_{\omega, n}^8}+\sum_{i\ge n}
K_{\omega, i} \left (\frac{1}{\sigma_{\omega, i}^8}-\frac{1}{\sigma_{\omega, i+1}^8}\right )\right )\\
&\precsim \frac{1}{\delta_{\omega, n}^{2(2+\varepsilon_0)}} \cdot \left (\frac{K_{\omega, n-1}}{\sigma_{\omega, n}^8}+\sum_{i\ge n}
K_{\omega, i} \left (\frac{1}{\sigma_{\omega, i}^8}-\frac{1}{\sigma_{\omega, i+1}^8}\right )\right )\\
&\precsim \frac{1}{\delta_{\omega, n}^{2(2+\varepsilon_0)}} \cdot \left (\frac{1}{\sigma_{\omega, n}^{6-8/p-8/r-\delta}}+\sum_{i\ge n}\frac{\sigma_{\omega, i+1}^8-\sigma_{\omega, i}^8}{\sigma_{\omega, i}^{14-8/p-8/r-\delta}}\right ) \\
&\precsim \frac{1}{\delta_{\omega, n}^{2(2+\varepsilon_0)}} \cdot \left (\frac{1}{\sigma_{\omega, n}^{6-8/p-8/r-\delta}}+\int_{\sigma_{\omega, n}^8}^\infty  \frac{1}{x^{\frac{14-8/p-8/r-\delta}{8}}}\, dx\right ) .\\
\end{split}
\]
Observe that $\int_{\sigma_{\omega, n}^8}^\infty  \frac{1}{x^{\frac{14-8/p-8/r-\delta}{8}}}\, dx=O(1)$ provided that $\delta>0$ is sufficiently small as $\frac 8 p+\frac 8 r <6$. Hence, 
\begin{equation}\label{450}
\mathbb E_\omega \left (\frac{\sup_{i\ge n}|R_{\omega, i}'|}{\delta_{\omega, n}^{2+\varepsilon_0}}\right )^2\precsim \frac{1}{\sigma_{\omega, n}^{2-8/p-8/r-\delta-2\varepsilon_0}}\precsim \frac{1}{n^{1-4/p-4/r-\delta-\varepsilon_0}}.
\end{equation}
Choose $w>0$ such that $w (1-4/p-4/r-\varepsilon_0-\delta)>1$. Note that this is possible due to~\eqref{314c}.
By~\eqref{450} and the Borel-Cantelli lemma,
\[
\sup_{i\ge \lfloor N^w\rfloor }|R_{\omega, i}'|=o(\delta_{\omega, \lfloor N^w\rfloor}^{2+\varepsilon_0}), \quad \text{$\mu_\omega$-a.e.}
\]
For any $n\in \mathbb N$, there is $N\in \mathbb N_0$ such that $\lfloor N^w\rfloor \le n<\lfloor (N+1)^w\rfloor $, and thus 
\[
\frac{|R_{\omega, n}'|}{\delta_{\omega, n}^{2+\varepsilon_0}}\le \frac{\sup_{i\ge \lfloor N^w\rfloor}|R_{\omega, i}'|}{\delta_{\omega, n}^{2+\varepsilon_0}}=\frac{\sup_{i\ge \lfloor N^w\rfloor}|R_{\omega, i}'|}{\delta_{\omega, \lfloor N^w\rfloor}^{2+\varepsilon_0}}\cdot \frac{\delta_{\omega, \lfloor N^w\rfloor}^{2+\varepsilon_0}}{\delta_{\omega, n}^{2+\varepsilon_0}}=o(1)\cdot \frac{\delta_{\omega, \lfloor N^w\rfloor}^{2+\varepsilon_0}}{\delta_{\omega, n}^{2+\varepsilon_0}}.
\]
Since $\delta_{\omega, n}^2\asymp n^{-1}$, we have $\frac{\delta_{\omega, \lfloor N^w\rfloor}^{2+\varepsilon_0}}{\delta_{\omega, n}^{2+\varepsilon_0}}=O(1)$ which yields the desired conclusion. The estimate for $R_{\omega, n}''$ is similar.
\end{proof}
Let
\begin{equation}\label{825}
S_{\omega, n}':=\sum_{i\le n}\left (H_{\omega, i}^2\circ T_\omega^i-\mathbb E_{\sigma^i \omega}(H_{\omega, i}^2)\right ).
\end{equation}
The proof of the following lemma is identical to the proof of~\cite[Lemma 4.6]{Su}.
\begin{lemma}\label{Lemma eps} For $\mathbb P$-a.e. $\omega \in \Omega$ the following holds:
if there is $\varepsilon'>0$ such that
\[
S_{\omega, n}'=o(\sigma_{\omega, n}^{2(1-\epsilon')}),
\]
then there is $\varepsilon_0>0$ such that
\[
S_{\omega, n}=o(\delta_{\omega, n}^{2+\epsilon_0}).
\]
\end{lemma}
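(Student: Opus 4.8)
The plan is to prove Lemma~\ref{Lemma eps} by two nested summations by parts (Abel summation), exploiting that — since $\Sigma^2>0$ — Lemmas~\ref{Sigma} and~\ref{Lemma 4.1} provide the two-sided asymptotics $\sigma_{\omega,n}^2\asymp n$ and $\delta_{\omega,n}^2\asymp n^{-1}$ for $\mathbb P$-a.e.\ $\omega\in\Omega$. Write $X_i:=H_{\omega,i}^2\circ T_\omega^i-\mathbb E_{\sigma^i\omega}(H_{\omega,i}^2)$, so that $X_i=S_{\omega,i}'-S_{\omega,i-1}'$ and $S_{\omega,n}=\sum_{i\ge n}\sigma_{\omega,i}^{-4}X_i$; the manipulations below are carried out pointwise for $\mu_\omega$-a.e.\ $x$. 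The weights $a_i:=\sigma_{\omega,i}^{-4}$ are positive, decreasing, and satisfy $a_i\asymp i^{-2}$, while the hypothesis gives $S_{\omega,N}'=o(\sigma_{\omega,N}^{2(1-\varepsilon')})=o(N^{1-\varepsilon'})$, so $a_N S_{\omega,N}'\to0$ as $N\to\infty$ and summation by parts yields
\[
S_{\omega,n}=-\sigma_{\omega,n}^{-4}S_{\omega,n-1}'+\sum_{i\ge n}\bigl(\sigma_{\omega,i}^{-4}-\sigma_{\omega,i+1}^{-4}\bigr)S_{\omega,i}'.
\]

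The boundary term is $O(n^{-2})\cdot o(n^{1-\varepsilon'})=o(n^{-1-\varepsilon'})$. For the series, write $|S_{\omega,i}'|\le\eta(i)\,i^{1-\varepsilon'}$ with $\eta(i)\to0$; it then suffices to show that $\sum_{i\ge n}(\sigma_{\omega,i}^{-4}-\sigma_{\omega,i+1}^{-4})\,i^{1-\varepsilon'}=O(n^{-1-\varepsilon'})$. This I would obtain by a second summation by parts, using that the telescoping tail $\sum_{k\ge i}(\sigma_{\omega,k}^{-4}-\sigma_{\omega,k+1}^{-4})=\sigma_{\omega,i}^{-4}\asymp i^{-2}$ and that $i^{1-\varepsilon'}-(i-1)^{1-\varepsilon'}\asymp i^{-\varepsilon'}$. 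Combining the two estimates gives $S_{\omega,n}=o(n^{-1-\varepsilon'})$ $\mu_\omega$-a.e.; and since $\delta_{\omega,n}^{2+\varepsilon_0}\asymp n^{-1-\varepsilon_0/2}$, any choice $\varepsilon_0\in(0,2\varepsilon')$ (for instance $\varepsilon_0=\varepsilon'$) yields $S_{\omega,n}=o(\delta_{\omega,n}^{2+\varepsilon_0})$, as required.

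I expect the argument to be essentially bookkeeping — it is the same strategy as in~\cite[Lemma 4.6]{Su}. The one point requiring care is to check that all boundary terms in both Abel summations vanish as $N\to\infty$ and that every error term lands at the common order $n^{-1-\varepsilon'}$; this is where one genuinely uses the strict $o(\cdot)$ in the hypothesis (not merely $O(\cdot)$) together with the two-sided bounds $\sigma_{\omega,n}^2\asymp n$ and $\delta_{\omega,n}^2\asymp n^{-1}$, hence ultimately the standing assumption $\Sigma^2>0$. It is worth noting that the scheme never requires pointwise control of the increments $\sigma_{\omega,i+1}^4-\sigma_{\omega,i}^4$ (which, by Lemma~\ref{GM}, could be as large as $i^{1+2/p+2/r+\delta}$): the telescoping absorbs them automatically.
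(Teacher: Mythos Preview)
Your proof is correct and follows the same Abel-summation strategy as \cite[Lemma 4.6]{Su}, which is precisely what the paper invokes (the paper gives no independent argument, simply citing that lemma). Your handling of the two boundary terms and the passage from $o(n^{-1-\varepsilon'})$ to $o(\delta_{\omega,n}^{2+\varepsilon_0})$ via $\delta_{\omega,n}^2\asymp n^{-1}$ is clean; the one cosmetic point is that $\varepsilon_0=2\varepsilon'$ also works, not just $\varepsilon_0<2\varepsilon'$.
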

Following~\cite{Su}, we decompose $S_{\omega, n}'$ as a sum of the following terms: 
\begin{equation}\label{term1}
    \sum_{i\le n} \left ( \varphi_{\sigma^i \omega}^2 \circ T_\omega^i-\mathbb E_{\sigma^i \omega}(\varphi_{\sigma^i \omega}^2)\right ),
\end{equation}
\begin{equation}\label{term2}
\mathbb E_{\sigma^{n+1}\omega}(G_{\omega, n+1}^2),
\end{equation}
\begin{equation}\label{term3}
-G_{\omega, n+1}^2\circ T_\omega^{n+1},
\end{equation}
\begin{equation}\label{term4}
-2\sum_{i\le n} H_{\omega, i} \circ T_\omega^i \cdot G_{\omega, i+1} \circ T_\omega^{i+1},
\end{equation}
and 
\begin{equation}\label{term5}
2\sum_{i\le n}\left (\varphi_{\sigma^i \omega} \circ T_\omega^i \cdot G_{\omega, i} \circ T_\omega^i-\mathbb E_{\sigma^i \omega}(\varphi_{\sigma^i \omega} G_{\omega, i})\right ).
\end{equation}
In the sequel, $\omega$ will belong to a full-measure subset of $\Omega$ on which the conclusions of the previous lemmas hold.

To handle the term \eqref{term1}, we define $\phi_\omega=\varphi_\omega^2$. Then $\|\phi_\omega\|_{\Lip}\leq 3\|\varphi_\omega\|_{\Lip}^2\in L^{r/2}(\Omega,\mathcal F,\mathbb P)$. Thus by applying Theorem \ref{MomThm} we see that for every $\delta>0$
$$
\|S_n^\omega\phi-\mu_\omega(S_n^\omega\phi)\|_{L^2(\mu_\omega)}\leq \tilde K(\omega)n^{1/2+1/p+2/r+\delta}
$$
where $\tilde K\in L^{\tilde p}(\Omega,\mathcal F,\mathbb P)$ and $\tilde p>0$ is defined by $1/\tilde p=1/p+2/r$. Using that $\tilde K(\sigma^j\omega)=o(j^{1/\tilde p})$ (by the mean ergodic theorem) and applying \cite[Lemma 9]{DH1} we conclude that 
$$
\eqref{term1}=O(n^{1/2+1/\tilde p}(\ln n)^{3/2+\delta})=O(n^{1/2+1/p+2/r}(\ln n)^{3/2+\delta})=o(\sigma_{\omega, n}^{2(1-\varepsilon')}),
$$
if $\varepsilon'$ is sufficiently small.

Next, by Lemma~\ref{GM} for any $\delta>0$,
\[
\eqref{term2}=\|G_{\omega, n+1}\|_{L^2(\mu_{\sigma^{n+1}\omega})}^2
%\le \|G_{\omega, n+1}\|_{L^s(\mu_{\sigma^{n+1}\omega})}^2
=O(n^{2/p+2/r+\delta})=o(\sigma_{\omega, n}^{2(1-\varepsilon')}),
\]
if $\varepsilon'$ is sufficiently small. In addition, by~\eqref{GT} we have that 
\[
\eqref{term3}=o(\sigma_{\omega, n}^{2(1-\varepsilon')}) \quad \text{$\mu_\omega$-a.e.}
\]
We now turn to~\eqref{term4}. We first note that $(H_{\omega, n}\circ T_\omega^n\cdot G_{\omega, n+1}\circ T_\omega^{n+1})_n$ is a reverse martingale difference with respect to filtration $( (T_\omega^n)^{-1}(\mathcal B_{\sigma^n \omega}))_n$. Hence, using the Lemma~\ref{GM} and H\"{o}lder inequality, we have  
\begin{equation}\label{1226}
\begin{split}
\int_{M_\omega}\left | \frac{\sum_{i\le n} H_{\omega, i} \circ T_\omega^i \cdot G_{\omega, i+1} \circ T_\omega^{i+1}}{\sigma_{\omega, n}^{2(1-\varepsilon')}}\right |^2\, d\mu_\omega  &=\frac{\sum_{i\le n}\int_0^1 H_{\omega, i}^2 \circ T_\omega^i \cdot  G_{\omega, i+1}^2 \circ T_\omega^{i+1}\, d\mu_\omega}{\sigma_{\omega, n}^{4(1-\varepsilon')}} \\
&\le \sum_{i\le n}\frac{\|H_{\omega, i}^2\|_{L^2(\mu_{\sigma^i \omega})}\cdot \|G_{\omega, i+1}^2\|_{L^2(\mu_{\sigma^{i+1}\omega})}}{\sigma_{\omega, n}^{4(1-\varepsilon')}} \\
&=\sum_{i\le n}\frac{\|H_{\omega, i}\|_{L^4(\mu_{\sigma^i \omega})}^2\cdot \|G_{\omega, i+1}\|_{L^4(\mu_{\sigma^{i+1}\omega})}^2}{\sigma_{\omega, n}^{4(1-\varepsilon')}} \\
%&\le \sum_{i\le n}\frac{\|M_{\omega, i}\|_{L^s(\mu_{\sigma^i \omega})}^2\cdot \|G_{\omega, i+1}\|_{L^s(\mu_{\sigma^{i+1}\omega})}^2}{\sigma_{\omega, n}^{4(1-\varepsilon')}} \\
&\precsim \sum_{i\le n}\frac{i^{4/p+4/r+\delta}}{n^{2(1-\varepsilon')}} \precsim \frac{n^{1+4/p+4/r+\delta}}{n^{2(1-\varepsilon')}},
\end{split}
\end{equation}
for any $\delta>0$. Choose $w>0$ such that $w(1-2\varepsilon'-4/p-4/r)>1$. It follows from~\eqref{1226} and the Borel-Cantelli lemma, 
\[
\sum_{i\le \lfloor N^w\rfloor} H_{\omega, i}\circ T_\omega^i \cdot G_{\omega, i+1}\circ T_\omega^{i+1}=o(\sigma_{\omega, \lfloor N^w\rfloor}^{2(1-\varepsilon')}) \quad \text{$\mu_\omega$-a.e.}
\]
For any $n\in \mathbb N$, there is $N\in \mathbb N_0$ such that $\lfloor N^w\rfloor \le n< \lfloor (N+1)^w\rfloor$. By Doob's martingale inequality and Lemma~\ref{GM}, 
\[
\begin{split}
&\int_{M_\omega} \frac{\max_{\lfloor N^w \rfloor \le j\le \lfloor (N+1)^w\rfloor} |\sum_{j\le i \le \lfloor (N+1)^w\rfloor}H_{\omega, i}\circ T_\omega^i \cdot G_{\omega, i+1}\circ T_\omega^{i+1} |^2}{\sigma_{n, \lfloor N^w\rfloor}^{4(1-\varepsilon')}}\, d\mu_\omega  \\
&\precsim  \frac{\int_0^1 |\sum_{\lfloor N^w\rfloor\le i \le \lfloor (N+1)^w\rfloor}H_{\omega, i}\circ T_\omega^i \cdot G_{\omega, i+1}\circ T_\omega^{i+1}|^2\, d\mu_\omega}{\sigma_{n, \lfloor N^w\rfloor}^{4(1-\varepsilon')}}  \\
&\le \frac{\sum_{\lfloor N^w\rfloor\le i\le \lfloor (N+1)^w\rfloor}\|H_{\omega, i}\|_{L^4(\mu_{\sigma^i \omega})}^2\cdot \|G_{\omega, i+1}\|_{L^4(\mu_{\sigma^{i+1}\omega})}^2}{\sigma_{n, \lfloor N^w\rfloor}^{4(1-\varepsilon')}} \\
&\precsim \frac{\sum_{\lfloor N^w\rfloor\le i\le \lfloor (N+1)^w\rfloor}i^{4/p+4/r+\delta}}{\sigma_{n, \lfloor N^w\rfloor}^{4(1-\varepsilon')}}\precsim \frac{N^{w(4/p+4/r+\delta)}N^{w-1}}{N^{2w(1-\varepsilon')}}=\frac{1}{N^{2w(1-\varepsilon')-w(4/p+4/r+\delta)-w+1}}.
\end{split}
\]
Note that our choice of $w$ implies that the last term above is summable (provided that $\varepsilon'$ and $\delta$ are sufficiently small). By the Borel-Cantelli lemma, 
\[
\max_{\lfloor N^w \rfloor \le j\le \lfloor (N+1)^w\rfloor}  \left |\sum_{j\le i \le \lfloor (N+1)^w\rfloor}H_{\omega, i}\circ T_\omega^i \cdot G_{\omega, i+1}\circ T_\omega^{i+1} \right |=o(\sigma_{n, \lfloor N^w\rfloor}^{2(1-\varepsilon')}) \quad \text{$\mu_\omega$-a.e.}
\]
We now have 
\[
\begin{split}
\left |\sum_{i\le n}H_{\omega, i}\circ T_\omega^i \cdot G_{\omega, i+1}\circ T_\omega^{i+1} \right | &\le \left |\sum_{i\le \lfloor (N+1)^w\rfloor}H_{\omega, i}\circ T_\omega^i \cdot G_{\omega, i+1}\circ T_\omega^{i+1} \right | \\
&\phantom{\le}+\max_{\lfloor N^w \rfloor \le j\le \lfloor (N+1)^w\rfloor} \left  |\sum_{j\le i \le \lfloor (N+1)^w\rfloor}H_{\omega, i}\circ T_\omega^i \cdot G_{\omega, i+1}\circ T_\omega^{i+1} \right | \\
&\le o(\sigma_{\omega, \lfloor (N+1)^w\rfloor}^{2(1-\varepsilon')})+o(\sigma_{\omega, \lfloor N^w\rfloor}^{2(1-\varepsilon')})=o(\sigma_{\omega, \lfloor N^w\rfloor}^{2(1-\varepsilon')})\le o(\sigma_{\omega, n}^{2(1-\varepsilon')}),
\end{split}
\]
$\mu_\omega$-a.e.

Finally, it remains to deal with~\eqref{term5}. 
Set
\[
\begin{split}
U_{\omega, n}&:=\sum_{i\le n}\left (\varphi_{\sigma^i \omega} \circ T_\omega^i \cdot G_{\omega, i} \circ T_\omega^i-\mathbb E_{\sigma^i \omega}(\varphi_{\sigma^i \omega} G_{\omega, i})\right )\\
&=\sum_{i\le n} \left ( (\varphi_{\sigma^i \omega}\cdot G_{\omega, i})\circ T_\omega^i-\mathbb E_{\sigma^i \omega}(\varphi_{\sigma^i \omega}G_{\omega, i})\right ).
\end{split}
\]
We now aim to estimate $\int_{M_\omega} |U_{\omega, n}-U_{\omega, m}|^2\, d\mu_\omega$ for $m<n$. To this end, we start by noticing that
\[
\begin{split}
&\int_{M_\omega} |U_{\omega, n}-U_{\omega, m}|^2\, d\mu_\omega \\
&=\sum_{m\le i \le n}\int_{M_\omega} \left ( (\varphi_{\sigma^i \omega}\cdot G_{\omega, i})\circ T_\omega^i-\mathbb E_{\sigma^i \omega}(\varphi_{\sigma^i \omega}G_{\omega, i})\right )^2\, d\mu_\omega \\
&\phantom{=}+2\sum_{m\le j\le n}\sum_{m\le i \le j-1} \int_{M_\omega} ((\varphi_{\sigma^j \omega}\cdot G_{\omega, j})\circ T_\omega^j-\mathbb E_{\sigma^j \omega}(\varphi_{\sigma^j \omega}G_{\omega, j}))((\varphi_{\sigma^i \omega}\cdot G_{\omega, i})\circ T_\omega^i-\mathbb E_{\sigma^i \omega}(\varphi_{\sigma^i \omega}G_{\omega, i}))\, d\mu_\omega \\
&=:(I)_{\omega, m, n}+(II)_{\omega, m, n}.
\end{split}
\]
Moreover, we have 
\[
\begin{split}
(I)_{\omega, m, n}\le \sum_{m\le i \le n}\int_{M_\omega} (\varphi_{\sigma^i \omega}G_{\omega, i})^2\circ T_\omega^i\, d\mu_\omega &=\sum_{m\le i \le n}\mathbb E_{\sigma^i \omega}(\varphi_{\sigma^i \omega}G_{\omega, i})^2\\
&\le \sum_{m\le i \le n} \|\varphi_{\sigma^i \omega}\|_{\Lip}^2 \cdot \|G_{\omega, i}\|_{L^2(\mu_{\sigma^i \omega})}^2.
\end{split}
\]
%where $C>0$ does not depend on $\omega$, $m$ and $n$. 
Together with Lemma~\ref{GM} this gives that
\[
(I)_{\omega, m, n}\precsim n^{2/p+4/r+\delta}(n-m),
\]
for any $\delta >0$. We now focus on $(II)_{\omega, m, n}$ (ignoring the factor $2$). Writing
\[
\psi_{\omega, i}=\varphi_{\sigma^i \omega}G_{\omega, i}-\mathbb E_{\sigma^i \omega}(\varphi_{\sigma^i \omega}G_{\omega, i})
\]
we have 
\[
\begin{split}
(II)_{\omega, m, n} &=\sum_{m\le j\le n}\sum_{m\le i \le j-1}\int_{M_\omega} \psi_{\omega, j}\circ T_\omega^j \cdot \psi_{\omega, i}\circ T_\omega^i \, d\mu_\omega  \\
&=\sum_{m\le j\le n}\sum_{m\le i \le j-1}\int_{M_{\sigma^i \omega}} \psi_{\omega, j}\circ T_{\sigma^i \omega}^{j-i}\cdot \psi_{\omega, i}\, d\mu_{\sigma^i \omega} \\
&=\sum_{m\le j\le n}\sum_{m\le i \le j-1}\int_{M_{\sigma^j \omega}} \psi_{\omega, j}L_{\sigma^i \omega}^{j-i}(\psi_{\omega, i})\, d\mu_{\sigma^j \omega}.
\end{split}
\]
Let $\delta_* > 0$. Following \cite[Lemma 3.4]{NTV}, we decompose
\begin{align*}
	&\sum_{m\le i \le j-1}\int_{M_{\sigma^j\omega}} \psi_{\omega, j}L_{\sigma^i \omega}^{j-i}(\psi_{\omega, i})\, d\mu_{\sigma^j \omega} \\
	&= \sum_{m\le i \le j-j^{\delta_*}}\int_{M_{\sigma^j\omega}} \psi_{\omega, j}L_{\sigma^i \omega}^{j-i}(\psi_{\omega, i})\, d\mu_{\sigma^j \omega} 
	+ \sum_{ m \vee ( j-j^{\delta_* )} \le i \le j-1}\int_{M_{\sigma^j \omega}} \psi_{\omega, j}L_{\sigma^i \omega}^{j-i}(\psi_{\omega, i})\, d\mu_{\sigma^j \omega} \\
	&= \sum_{m\le i \le j-j^{\delta_*}}\int_{M_{\sigma^j\omega}} \psi_{\omega, j}L_{\sigma^i \omega}^{j-i}(\psi_{\omega, i})\, d\mu_{\sigma^j \omega} 
	+ O( j^{\delta_*}  j^{ \frac2p+\frac{4}{r} +\delta} ),
\end{align*}
for any $\delta >0$.
The last equality follows from
Lemma \ref{GM}, which implies
$$
\Vert \psi_{\omega, j} \Vert_{ L^2( \mu_{\sigma^j \omega} ) }\precsim \|\varphi_{\sigma^j\omega}\|_{\Lip}\cdot \| G_{\omega, j}\|_{L^2(\mu_{\sigma^j \omega})}\precsim j^{1/p+2/r+\delta}.
$$

To deal with the remaining term, we write
$$
L_{\sigma^i\omega}^{j-i} ( \psi_{\omega, i} )
= \sum_{q = 0}^{i-1}
\bigl[  L_{  \sigma^i \omega }^{j-i} \bigl(  L_{ \sigma^q \omega }^{i-q}   ( \varphi_{\sigma^q \omega} )  \varphi_{ \sigma^i \omega }  \bigr)  \bigr]_{ \sigma^j \omega },
$$
and apply \eqref{DEC} together with~\cite[Lemma 3]{DH} 
to obtain 
\begin{align*}
	\Vert L_{\sigma^i \omega}^{j-i}(\psi_{\omega, i}) \Vert_{ L^2( \mu_{\sigma^j \omega} )  }	
    %\precsim K( \sigma^i \omega ) i a_{j-i} 
	\precsim ( i + 1)^{ 1 + 1 / p +2/r+ \delta } a_{2, j-i}
\end{align*}
for any $\delta > 0$. Thus, 
\begin{align*}
	&\left |\sum_{m\le i \le j-j^{\delta_*}}\int_{M_{\sigma^j \omega}} \psi_{\omega, j}L_{\sigma^i \omega}^{j-i}(\psi_{\omega, i})\, d\mu_{\sigma^j \omega} \right |\le 
	\sum_{m\le i \le j-j^{\delta_*}} \Vert \psi_{\omega, j} \Vert_{ L^2( \mu_{\sigma^j\omega} ) } 
	\Vert L_{\sigma^i \omega}^{j-i}(\psi_{\omega, i}) \Vert_{ L^2( \mu_{\sigma^j\omega} ) }
	\\ 
	&\precsim  j^{1/p+2/r+\delta} \sum_{m\le i \le j-j^{\delta_*}}
	( i + 1)^{ 1 + 1 / p +2/r+ \delta } a_{2, j-i}
	\precsim j^{ 1 + \frac2p +\frac{4}{r} +\delta + \delta_*( 1-a)  }
\end{align*}
where we recall that $a_{2, k}=O(k^{-a})$. 
Choosing 
$
\delta_* = 1/a,
$
we obtain
$$
\sum_{m\le i \le j-1}\int_{M_{\sigma^j\omega}}\psi_{\omega, j}L_{\sigma^i \omega}^{j-i}(\psi_{\omega, i})\, d\mu_{\sigma^j \omega} \precsim j^{ \delta_* + \frac2p +\frac 4 r+ \delta } 
=j^{1/a+ 2/p+4/r+ \delta }.
$$
It follows that 
$$
(II)_{\omega, m, n} \precsim \sum_{m \le j \le n} 
j^{1/a+ \frac2p +\frac 4 r+ \delta },
$$
for any $\delta > 0$.
In particular,
\begin{equation}\label{eq:U_nm}
	\int_{M_\omega} |U_{\omega, n}-U_{\omega, m}|^2\, d\mu_\omega \precsim
	 \sum_{m \le j \le n} j^{1/a + 2/p +4/r+ \delta  }.
\end{equation}
Since
$$
\int_{M_\omega} \biggl| \frac{U_{\omega,n}}{\sigma_{\omega,n}^{ 2 ( 1- \ve') }} \biggr|^2 \, d \mu_\omega \precsim n^{ -2(1 - \ve') }  n^{1 +  1/a + 2/p +4/r+ \delta   }
= n^{   -1 + 2 \ve'  +1/a  + 2/p +4/r +\delta   },
$$
it follows by the Borel--Cantelli lemma that
$$
U_{ \lfloor N^w \rfloor } = o( \sigma_{ \lfloor N^w \rfloor }^{2(1- \ve')} ) \quad \text{$\mu_\omega$-a.e.},
$$
provided that we choose $w$ sufficiently large such that 
\begin{align}\label{eq:cond_w_1}
w \biggl(1-1/a-2/p-4/r \biggr) > 1,
\end{align}
and choose $\delta, \ve'$ sufficiently small.
%Note that this is possible if 
%\begin{align}\label{eq:p_lb}
%1/a+2/p+3/r<1
%\end{align}

Next, whenever $\lfloor N^w \rfloor \le n \le  \lfloor (N+1)^w \rfloor$, we have
\begin{align*}
	|U_{\omega,n}| \le |U_{ \omega, \lfloor N^w \rfloor  }| + 
	\sup_{  \lfloor N^w \rfloor \le n \le  \lfloor (N+1)^w \rfloor} | U_{\omega,n} - U_{\omega,  \lfloor N^w \rfloor   } | \\
	= o( \sigma_{ \lfloor N^w \rfloor }^{2(1- \ve')} )
	+ \sup_{  \lfloor N^w \rfloor \le n \le  \lfloor (N+1)^w \rfloor} | U_{\omega,n} - U_{\omega,  \lfloor N^w \rfloor   } |.
\end{align*}
Again we use the Borel--Cantelli lemma to estimate the remaining term. To this end, note that by \eqref{eq:U_nm},
\begin{align*}
	&\int_{M_\omega} \biggl| \frac{ \sup_{  \lfloor N^w \rfloor \le n \le  \lfloor (N+1)^w \rfloor} | U_{\omega,n} - U_{\omega,  \lfloor N^w \rfloor   } |  }{ \sigma_{ \lfloor N^w \rfloor }^{2(1- \ve')}  } \biggr|^2 \, d \mu_\omega \\
	&\precsim 
	N^{ - 2w(1 - \ve') }
	\sum_{ \lfloor N^w \rfloor \le n \le  \lfloor (N+1)^w \rfloor }
		\int_{M_\omega}  | U_{\omega,n} - U_{\omega,  \lfloor N^w \rfloor   } |^2 \, d \mu_\omega \\
	&\precsim 
		N^{ - 2w(1 - \ve') }
		\sum_{ \lfloor N^w \rfloor \le n \le  \lfloor (N+1)^w \rfloor }
		 \sum_{  \lfloor N^w \rfloor   \le j \le n} 
		 j^{  1/a+2/p+4/r + \delta  }
		  \\
	&\precsim N^{ - 2 + w(1/a+2/p+4/r) + 2 w \ve' + w\delta },
\end{align*}
for any $\delta > 0$.
The upper bound is summable if 
\begin{align}\label{eq:cond_w_2}
w \biggl( 1/a+ 2/p+ 4/r   \biggr) < 1
\end{align}
and $\delta, \ve'$ are sufficiently small.
We conclude that
$$
\eqref{term5}= o( \sigma_n^{2(1-\ve')} ) \quad \text{$\mu_\omega$-a.e.},
$$
provided that \eqref{eq:cond_w_1} and \eqref{eq:cond_w_2} are satisfied. Such $w$ exists whenever
$$
2/a+4/p+6/r<1,
$$
the latter being a consequence of our assumptions (see~\eqref{314c}).

\subsection{CLT rates}
We use the same notation as in the previous subsection.
Let us begin with the following standard result.
\begin{proposition}\label{prop918}
Let the conditions of Theorem \ref{MomThm} be in force with $2s$ instead of $s$. Suppose that $\mathbb P$-a.e. $\omega \in \Omega$ and all $\delta>0$,
 $$
\left\|\sum_{j=0}^{n-1}(H_{\omega,j}\circ T_\omega^j)^2-\sum_{j=0}^{n-1}\mathbb E_{\sigma^j \omega}[(H_{\omega,j})^2]\right\|_{L^s(\mu_\omega)}^s=O(n^{s/2+2s/p+2s/r+A+\delta})
 $$
 for some $A\geq 1$.
Let $\Phi(t)$ be the standard normal distribution function. Then for every $\delta>0$,
 $$
\sup_{t\in\mathbb R}|\mu_\omega(S_{n}^\omega\varphi\leq t\Sigma_{\omega,n})-\Phi(t)|=O(n^{-\frac{s}{2(2s+1)}+\frac{2s}{p(2s+1)}+\frac{2s}{r(2s+1)}+\frac{A}{2s+1}+\delta}),
$$    
where $\Sigma_{\omega, n}^2$ is given by~\eqref{752}.
\end{proposition}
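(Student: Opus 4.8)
The plan is to peel off the coboundary term in the martingale decomposition \eqref{bsums}, apply a Berry--Esseen inequality for martingales to what remains, and then feed in the moment estimates of Lemma~\ref{GM} (now available with index $2s$) together with the quadratic--variation hypothesis of the proposition.

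First I would set $M_{\omega,n}:=\sum_{k=0}^{n}H_{\omega,k}\circ T_\omega^k$, so that \eqref{bsums} reads $S_n^\omega\varphi=M_{\omega,n}+G_{\omega,n+1}\circ T_\omega^{n+1}$, and by \eqref{mdiff} the terms $H_{\omega,k}\circ T_\omega^k$ form a reverse martingale difference sequence for the decreasing filtration $((T_\omega^{k})^{-1}(\mathcal B_{\sigma^k\omega}))_k$, with $\mathbb E_\omega[M_{\omega,n}^2]=\sigma_{\omega,n}^2$. Assuming $\Sigma^2>0$ (so that $\sigma_{\omega,n}^2\asymp n$; the degenerate case needs only a trivial modification), Lemma~\ref{GM} with index $2s$ gives $\|G_{\omega,n+1}\circ T_\omega^{n+1}\|_{L^s(\mu_\omega)}\le\|G_{\omega,n+1}\|_{L^{2s}(\mu_{\sigma^{n+1}\omega})}=O(n^{1/p+1/r+\delta})$, while Lemma~\ref{434} gives $\Sigma_{\omega,n}^2-\sigma_{\omega,n}^2=O(n^{2/p+2/r+\delta})$. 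A routine truncation step --- for $\eta>0$ one has $\mu_\omega(S_n^\omega\varphi\le x)\le\mu_\omega(M_{\omega,n}\le x+\eta)+\eta^{-s}\|G_{\omega,n+1}\circ T_\omega^{n+1}\|_{L^s(\mu_\omega)}^s$, then the middle term is compared to $\Phi$ via its Lipschitz bound and $\eta$ is optimised --- shows that both replacing $S_n^\omega\varphi$ by $M_{\omega,n}$ and replacing the normalisation $\Sigma_{\omega,n}$ by $\sigma_{\omega,n}$ alter the Kolmogorov distance by a quantity of strictly smaller polynomial order than the target bound. Hence it suffices to estimate $\Delta_{\omega,n}:=\sup_{t\in\mathbb R}\bigl|\mu_\omega(M_{\omega,n}\le t\sigma_{\omega,n})-\Phi(t)\bigr|$.

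Next I would invoke a Berry--Esseen inequality for martingale arrays (in the form used in~\cite{Su}; the classical versions bound the Kolmogorov distance by the $(2q+1)$-th root of a Lyapunov sum of $2q$-th moments plus the $L^q$-norm of the normalised quadratic--variation defect). Applied to $M_{\omega,n}/\sigma_{\omega,n}$ with $q=s$ this yields
\[
\Delta_{\omega,n}\precsim\Bigl(\sigma_{\omega,n}^{-2s}\!\sum_{k\le n}\mathbb E_{\sigma^k\omega}\bigl[(H_{\omega,k})^{2s}\bigr]\Bigr)^{\!\frac1{2s+1}}+\Bigl(\Bigl\|\sigma_{\omega,n}^{-2}\!\sum_{k\le n}(H_{\omega,k}\circ T_\omega^k)^2-1\Bigr\|_{L^s(\mu_\omega)}^{s}\Bigr)^{\!\frac1{2s+1}}.
\]
If the version at hand is phrased via the conditional quadratic variation $\sum_k\mathbb E_\omega[(H_{\omega,k}\circ T_\omega^k)^2\mid(T_\omega^{k+1})^{-1}(\mathcal B_{\sigma^{k+1}\omega})]$ instead of $\sum_k(H_{\omega,k}\circ T_\omega^k)^2$, then the difference of the two is a reverse martingale whose $L^s$-norm is $O(n^{1/2+2/p+2/r+\delta})$ by Burkholder's inequality and Lemma~\ref{GM}, and this is absorbed into the second term. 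Now Lemma~\ref{GM} with index $2s$ gives $\mathbb E_{\sigma^k\omega}[(H_{\omega,k})^{2s}]=\|H_{\omega,k}\|_{L^{2s}(\mu_{\sigma^k\omega})}^{2s}\le A(\omega)^{2s}k^{2s/p+2s/r+\delta}$, so the first term is $O(n^{(1-s+2s/p+2s/r+\delta)/(2s+1)})$; and since $\sum_{j=0}^{n-1}\mathbb E_{\sigma^j\omega}[(H_{\omega,j})^2]=\sigma_{\omega,n}^2$ up to an $O(n^{2/p+2/r+\delta})$ index-shift term, the hypothesis of the proposition gives $\bigl\|\sum_{k\le n}(H_{\omega,k}\circ T_\omega^k)^2-\sigma_{\omega,n}^2\bigr\|_{L^s(\mu_\omega)}^{s}=O(n^{s/2+2s/p+2s/r+A+\delta})$, whence the second term is $O(n^{(-s/2+2s/p+2s/r+A+\delta)/(2s+1)})$. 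Since $A\ge1$ forces $1-s\le -s/2+A$, the second term dominates, and together with the reduction above this gives exactly the asserted rate $O(n^{-\frac{s}{2(2s+1)}+\frac{2s}{p(2s+1)}+\frac{2s}{r(2s+1)}+\frac{A}{2s+1}+\delta})$.

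The main obstacle is locating, or adapting to the reverse--martingale, non-stationary setting, the precise Berry--Esseen inequality whose two error terms are exactly a Lyapunov sum of $2s$-th moments and an $L^s$-control of the normalised quadratic--variation defect with outer exponent $1/(2s+1)$; everything else is bookkeeping built from Lemma~\ref{GM}, Lemma~\ref{434}, and the hypothesis. A secondary point requiring care is to confirm that the coboundary remainder $G_{\omega,n+1}\circ T_\omega^{n+1}$ and the gap between $\sigma_{\omega,n}^2$ and $\Sigma_{\omega,n}^2$ are genuinely of strictly smaller order than the main term --- this uses $\Sigma^2>0$ and the fact that $s$ is large enough for the target exponent to be negative in the first place.
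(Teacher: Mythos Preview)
Your proposal is correct and matches the paper's proof essentially step for step: the paper applies the martingale Berry--Esseen bound of Haeusler~\cite[Theorem~1]{Haus} with exponent $\delta=s-1$ (which is exactly the $L^{2s}$-Lyapunov plus $L^s$-quadratic-variation form you describe, with outer exponent $1/(2s+1)$), and handles the passage from $S_n^\omega H$ to $S_n^\omega\varphi$ via the coboundary using~\cite[Lemma~3.3]{SPA} rather than your explicit truncation, but the content is the same. The two points you flag as needing care --- the precise martingale CLT-rate reference and the order of the coboundary remainder --- are resolved in the paper by these two citations, and the footnote there about replacing the conditional squares by $H_{\omega,i}^2$ is exactly your observation that the difference is a martingale.
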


\begin{proof}
Throughout the proof $C(\omega)$ will denote a generic constant independent of $n$ and $t$. We will also use the same notation as in the proof of Theorem~\ref{MomThm}.

 By applying \footnote{Note that one can replace there $\mathbb E[H_{\omega,i}^2|(T_\omega)^{-(i+1)}\mathcal B]$ with $H_{\omega,i}^2$ since their difference is a martingale difference} \cite[Theorem 1]{Haus} with $\delta=s-1$ and $X_k=\sigma_{\omega,n}^{-1}H_{\sigma^k\omega}\circ T_\omega^k, k<n$ and using that $\|H_{\omega,n}\|_{L^{2s}(\mu_{\sigma^n \omega})}=O(n^{1/p+1/r+
 \delta})$ (see Lemma~\ref{GM}) we have that for $\mathbb P$-a.e. $\omega \in \Omega$,
 $$ 
\sup_{t\in\mathbb R}|\mu_\omega(S_{n}^\omega H\leq t\|S_n^\omega H\|_{L^2(\mu_\omega)})-\Phi(t)|\leq C(\omega)\left(n^{-s+2s/p+2s/r+\delta s}+n^{-s/2+2s/p+2s/r+A+\delta }\right)^{\frac{1}{2s+1}},
$$       
where $C(\omega)>0$. Here we used that $3+2\delta=2s+1$, that $2+\delta=2s$ and that $\sigma_{\omega,n}\asymp\Sigma n^{1/2}$.
Next, the rates for the sum $S_n^\omega\varphi$ follow from the above rates for the sum $S_n^\omega H$. In fact, by \eqref{mdec} and Lemma~\ref{GM} we have $\|S_n^\omega\varphi-S_n^\omega H\|_{L^{2s}(\mu_\omega)}=O(n^{1/p+1/r+\delta})$. Therefore,
$$
\|S_{n}^\omega \varphi/\Sigma_{\omega,n}-S_{n}^\omega H/\sigma_{\omega,n}\|_{L^s(\mu_\omega)}\leq C(\omega)n^{1/p+1/r+\delta},
$$
for $\mathbb P$-a.e. $\omega \in \Omega$ and $n\in \mathbb N$, where $\sigma_{\omega, n}^2$ is given by~\eqref{756}.
Now, the CLT rates for $S_n^\omega\varphi$ follow from the rates for $S_n^\omega H$ together with \cite[Lemma 3.3]{SPA} applied with $a=2s$.
\end{proof}

Next, let us show that the conditions of the previous proposition hold with $s=2$.
 \begin{proposition}\label{QuadProp}
Let the assumptions of Theorem \ref{T-ASIP} be in force.  Then for $\mathbb P$-a.e. $\omega \in \Omega$ and all $\delta>0$,
 $$
\left\|\sum_{j=0}^{n-1}(H_{\omega,j}\circ T_\omega^j)^2-\sum_{j=0}^{n-1}\mathbb E_{\sigma^j \omega}[(H_{\omega,j})^2]\right\|_{L^2(\mu_\omega)}=O(n^{\frac12+\frac{1}{2a}+\frac{2}p+\frac{2}{r}+\delta}).
 $$     
 Therefore, if also the conditions of Theorem \ref{MomThm} hold with $s=4$ then
  $$
\sup_{t\in\mathbb R}|\mu_\omega (S_{n}^\omega\varphi\leq t\Sigma_{\omega,n})-\Phi(t)|=O(n^{-\frac{1}{5}+\frac{2}{5a}+\frac{8}{5r}+\frac{8}{5p}+\delta}),
$$  
for $\mathbb P$-a.e. $\omega \in \Omega$ and $\delta>0$.
 \end{proposition}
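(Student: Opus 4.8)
The first thing to notice is that
$\sum_{j=0}^{n-1}(H_{\omega,j}\circ T_\omega^j)^2-\sum_{j=0}^{n-1}\mathbb E_{\sigma^j\omega}[(H_{\omega,j})^2]$ is exactly $S_{\omega,n}'$ from \eqref{825}, and that its decomposition into the five pieces \eqref{term1}--\eqref{term5} has already been carried out in the proof of Theorem~\ref{T-ASIP}. So the plan for the first assertion is simply to record, for each of these five pieces, the $L^2(\mu_\omega)$-bound that is implicit in (or established along the way in) that proof, and then add them up.

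\textbf{Collecting the $L^2$ bounds.} For \eqref{term1}, writing $\phi_\omega=\varphi_\omega^2$ one has $\|\phi_\omega\|_{\Lip}\le 3\|\varphi_\omega\|_{\Lip}^2\in L^{r/2}(\Omega,\mathcal F,\mathbb P)$, so Theorem~\ref{MomThm} applied to $\varphi_\omega^2-\mu_\omega(\varphi_\omega^2)$ (with $r/2$ in place of $r$, which is legitimate since $2\in\mathcal I$ by Theorem~\ref{T-ASIP}) gives $\|\eqref{term1}\|_{L^2(\mu_\omega)}=O(n^{1/2+1/p+2/r+\delta})$. For \eqref{term2} and \eqref{term3}, Lemma~\ref{GM} (with $s=2$ and $s=4$ respectively) gives $\|G_{\omega,n+1}\|_{L^2}^2=O(n^{2/p+2/r+\delta})$ and $\|G_{\omega,n+1}^2\circ T_\omega^{n+1}\|_{L^2(\mu_\omega)}=\|G_{\omega,n+1}\|_{L^4}^2=O(n^{2/p+2/r+\delta})$. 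For \eqref{term4}, the sequence $(H_{\omega,i}\circ T_\omega^i\cdot G_{\omega,i+1}\circ T_\omega^{i+1})_i$ is a reverse martingale difference sequence, so by orthogonality, H\"older, and Lemma~\ref{GM} (exactly the computation \eqref{1226}) its $L^2$-norm is $\precsim\bigl(\sum_{i\le n}\|H_{\omega,i}\|_{L^4}^2\|G_{\omega,i+1}\|_{L^4}^2\bigr)^{1/2}\precsim n^{1/2+2/p+2/r+\delta}$. Finally, \eqref{term5} equals $2U_{\omega,n}$, and taking $m=0$ in \eqref{eq:U_nm} gives $\|U_{\omega,n}\|_{L^2(\mu_\omega)}^2\precsim\sum_{j\le n}j^{1/a+2/p+4/r+\delta}\precsim n^{1+1/a+2/p+4/r+\delta}$, i.e. $\|\eqref{term5}\|_{L^2(\mu_\omega)}=O(n^{1/2+1/(2a)+1/p+2/r+\delta})$. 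Since $a>2$ and $p,r>0$, each of the five exponents is dominated by $\tfrac12+\tfrac1{2a}+\tfrac2p+\tfrac2r$, which proves the first claim.

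\textbf{The CLT rate.} Here the plan is to feed the bound just proved into Proposition~\ref{prop918} with $s=2$. Its hypotheses hold: under the assumptions of Theorem~\ref{T-ASIP}, Assumption~\ref{P assum} holds with $4=2s\in\mathcal I$ and $\sum_n a_{4,n}<+\infty$ by \eqref{1351c}, so the conditions of Theorem~\ref{MomThm} with $2s$ in place of $s$ are in force; and squaring the bound from the previous paragraph yields $\|S_{\omega,n}'\|_{L^2(\mu_\omega)}^2=O(n^{1+1/a+4/p+4/r+\delta})$, which is of the form $O(n^{s/2+2s/p+2s/r+A+\delta})$ required there with $s=2$ and $A=2/a+4/p+4/r$. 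Substituting $s=2$ and this value of $A$ into the exponent $-\tfrac{s}{2(2s+1)}+\tfrac{2s}{p(2s+1)}+\tfrac{2s}{r(2s+1)}+\tfrac{A}{2s+1}+\delta$ and simplifying gives $-\tfrac15+\tfrac2{5a}+\tfrac8{5p}+\tfrac8{5r}+\delta$, which is a genuine decay rate precisely because $2/a+8/p+8/r<1$ by \eqref{314c}; this is the asserted conclusion.

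\textbf{Main obstacle.} The only non-routine ingredient is the $L^2$-bound on \eqref{term5}: it is the single term that ``sees'' the polynomial rate $a$, and controlling $\int_{M_{\sigma^j\omega}}\psi_{\omega,j}\,L_{\sigma^i\omega}^{j-i}(\psi_{\omega,i})\,d\mu_{\sigma^j\omega}$ requires rewriting $L_{\sigma^i\omega}^{j-i}(\psi_{\omega,i})$ through the product-observable identity and invoking the \emph{nonuniform} decay estimate \eqref{DEC}, combined with the near-diagonal/far splitting at threshold $j-i\le j^{1/a}$. All of this is exactly what produces \eqref{eq:U_nm} in the proof of Theorem~\ref{T-ASIP}, so in the present proposition it is used as a black box; the remaining terms are genuinely routine given Lemma~\ref{GM} and Theorem~\ref{MomThm}.
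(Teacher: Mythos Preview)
Your proof is correct and follows essentially the same approach as the paper: identify the quantity as $S_{\omega,n}'$, reuse the decomposition \eqref{term1}--\eqref{term5} from the proof of Theorem~\ref{T-ASIP}, read off the $L^2$-bounds for each piece (Lemma~\ref{GM}, \eqref{1226}, \eqref{eq:U_nm}), and then plug into Proposition~\ref{prop918} with $s=2$. The only cosmetic difference is that for \eqref{term1} you invoke Theorem~\ref{MomThm} (as in the ASIP proof), while the paper's proof of this proposition bounds the variance of $I_1$ directly via the correlation decay; both give an exponent dominated by the final one. One small slip: your closing remark that ``$2/a+8/p+8/r<1$ by \eqref{314c}'' is not quite right---\eqref{314c} reads $2/a+4/p+8/r<1$---but this aside about the rate being an actual decay is not part of the proposition's statement and does not affect the proof.
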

 \begin{proof}
 We observe that 
 \[
 \sum_{j=0}^{n-1}(H_{\omega,j}\circ T_\omega^j)^2-\sum_{j=0}^{n-1}\mathbb E_{\sigma^j \omega}[(H_{\omega,j})^2]
 \]
 coincides with $S_{\omega, n}'$ introduced in the proof of Theorem~\ref{T-ASIP} (see~\eqref{825}).
 Recall the decomposition of $S_{\omega,n}'$ into the sum of terms in \eqref{term1}--\eqref{term5}.  Let us denote the corresponding terms by $I_1,I_2,I_3,I_4,I_5$, respectively. 
Next, notice that
$$
\mathbb E_\omega [I_1^2]\leq C(\omega)n^{1+2/r+1/p}
$$
for some random variable $C(\omega)>0$ that does not depend on $n$. 
Indeed, this holds since by \eqref{obsint} and the mean ergodic theorem we have $\|\varphi_{\sigma^j\omega}\|_{\Lip}=o(j^{1/r})$ and $K(\sigma^j\omega)=O(j^{1/p})$ since $K\in L^p(\Omega,\mathcal F,\mathbb P)$, and so the correlations decay fast enough to get the linear growth of the variance of $I_1$ after normalizing by $n^{2/r+1/p}$. We conclude that the contribution of this term to the $L^2(\mu_\omega)$ norm is just $O(n^{1/2+2/r+1/p})$.
Next, by Lemma \ref{GM} (applied for $s=2$),
$$
\max(\|I_2\|_{L^2(\mu_\omega)}, \|I_3\|_{L^2(\mu_\omega)})=O(n^{\frac{1}{p}+\frac1{r}+\delta}).
$$
Now, by \eqref{1226},
$$
\|I_4\|_{L^2(\mu_\omega)} =O( n^{1/2+2/p+2/r+\delta}).
$$
Finally, \eqref{eq:U_nm} gives that 
$$
\|I_5\|_{L^2(\mu_\omega)}=O( n^{\frac{1}{2}+\frac{1}{2a}  + \frac1p +\frac{2}{r}+ \delta }).
$$
It remains to apply Proposition~\ref{prop918} noticing that the assumptions of Theorem~\ref{T-ASIP} imply those of Theorem~\ref{MomThm} for $s=2$.
 \end{proof}

\section{Annealed limit theorems}
In this section, we will show how to obtain limit theorems for random variables of the form $(S_n \varphi)(\omega, \cdot)=\sum_{j=0}^{n-1}\varphi_{\sigma^j \omega}\circ T_\omega^j$ when viewed as random variables on the probability space $(\mathcal M, \mu)$, where $\mu$ is given by~\eqref{measmu}. We recall (see Proposition~\ref{properg}) that $\mu$ is ergodic for the skew product transformation $\tau$ (see~\eqref{spt}) provided that Assumption~\ref{P assum} holds. In order to control the size of the paper, we will focus only on the annealed version of Theorem~\ref{T-ASIP}.

We impose certain mixing assumptions on the base space $(\Omega, \mathcal F, \mathbb P, \sigma)$. 
More precisely, we assume that $(\Omega,\mathcal F,\mathbb P, \sigma)$ is the left-shift system generated by an $\alpha$-mixing stationary sequence $(X_j)_{j\in \mathbb Z}$. That is, if $(\mathcal Y,\mathcal G)$ is the common state space of $X_j$ then $\Omega=\mathcal Y^\mathbb Z, \mathcal F=\mathcal G^\mathbb Z$, $\mathbb P$ is the law induced by $(X_j)_{j\in\mathbb Z}$ on $\Omega$ and $\sigma:\Omega\to\Omega$ is the left-shift. We recall that the $\alpha$-mixing coefficients of $(X_j)$ are defined by 
$$
\alpha(n)=\sup_{k\in\mathbb Z}\left\{\left|\mathbb P(A\cap B)-\mathbb P(A)\mathbb P(B)\right|: A\in\mathcal F_{-\infty,k}, B\in\mathcal F_{k+n,\infty}\right\}
$$
where $\mathcal F_{a,b}$ is the $\sigma$-algebra generated by $X_s$ for all finite $a\leq s\leq b$.

In addition, throughout this section, we assume $M_\omega=M$ and that the measures $\mu_\omega$ are of the form $d\mu_\omega=h_\omega \, dm$, where $m$ is a  Borel probability measure in $M$ and $h_\omega$ are densities with respect to $m$. Note that in this case $\mathcal M=\Omega \times M$. We assume that $T_\omega^*m <<m$ for $\omega \in \Omega$, and let $\mathcal L_\omega$ be the corresponding transfer operator associated with $T_\omega$ and the measure $m$. Let $\mathcal L_\omega^n$ be defined as in~\eqref{Ln} replacing $L_\omega$ with $\mathcal L_\omega$.

By $\mathcal K$ we will denote the transfer operator of $\tau$ with respect to the measure $\mu$ and the sub-$\sigma$-algebra $\mathcal F_0$ of $\Omega \times M$ generated by the projection 
$$
\pi(\omega,x)=((\omega_j)_{j\geq 0},x), \,\,\omega=(\omega_j)_{j\in\mathbb Z}.
$$
\begin{proposition}\label{MainL}
 Assume that the following holds:
 \begin{itemize}
\item Assumption~\ref{P assum} holds  with  $\mathcal I=\{s\}$, $p=s$ and
%$s\ge 2$ and 
$a_n=o(n^{-t})$ for $t>\frac 1 p$. In addition, 
\begin{equation}\label{Exp2}
\|\mathcal L_\omega^ng-m_\omega (g)h_{\sigma^n\omega}\|_{L^{p}(m)}\le CK(\omega)a_n (1+\|g\|_{\Lip}),
\end{equation}
for $\mathbb P$-a.e. $\omega \in \Omega$, $n\in \mathbb N$ and $g\colon M \to \mathbb R$ Lipschitz;
\item $\varphi \colon \Omega \times M \to \mathbb R$ is measurable, $\varphi_\omega:=\varphi (\omega, \cdot)$ is Lipschitz,
\[
\omega \mapsto \|\varphi_\omega \|_{\Lip}\in L^p(\Omega, \mathcal F, \mathbb P) \quad \text{and} \quad \int_{\Omega \times M}\varphi\, d\mu=0;
\]
\item   $c>0$ such that 
\begin{equation}\label{ulb}
h_\omega \ge c, \quad \text{for $\mathbb P$-a.e. $\omega \in \Omega$;}
\end{equation}
\item  $\omega\to \varphi(\omega,\cdot)$ and $\omega\to \mathcal L_\omega$ depend only on the coordinate $\omega_0$.
 \end{itemize}
 Then
\begin{equation}\label{1st}
\left\|\mathcal K^n \varphi\right\|_{L^{p/2}(\mu)}\leq C\big(a_{[n/2]}+(\alpha(n/2))^{1/p}\big)=:\gamma_n,
\end{equation}
if $p\ge 2$, where $C=C_{\varphi}>0$ is a constant. Moreover,
\begin{equation}\label{2nd}
\left\|\mathcal K^i(\varphi \mathcal K^j \varphi)-\mu\big(\varphi \mathcal K^j\varphi \big)\right\|_{L^{p/3}(\mu)}\leq  C\gamma_{\max(i,j)},
\end{equation}
provided that $p\ge 3$.
\end{proposition}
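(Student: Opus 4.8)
The plan is to reduce both estimates to the quenched bounds \eqref{DEC}, \eqref{dec2}, the density estimate \eqref{Exp2}, and the $\alpha$-mixing of the base, through a duality argument. Using that $\omega\mapsto T_\omega$ depends only on $\omega_0$ one checks that $\tau^{-1}\mathcal{F}_0\subseteq\mathcal{F}_0$, so $\mathcal{K}$ is a positive operator with $\mathcal{K}1=1$ that contracts every $L^q(\mu)$; since $\mathcal{K}^n\varphi$ is $\mathcal{F}_0$-measurable,
\[
\|\mathcal{K}^n\varphi\|_{L^{q}(\mu)}=\sup\bigl\{|\mu(\varphi\cdot(\psi\circ\tau^n))|:\psi\ \mathcal{F}_0\text{-measurable},\ \|\psi\|_{L^{q'}(\mu)}\le1\bigr\},
\]
and, expanding fibrewise with the quenched operators $L_\omega$ and changing variables $\omega\mapsto\sigma^n\omega$,
\[
\mu(\varphi\cdot(\psi\circ\tau^n))=\int_\Omega\!\int_M (L^n_{\sigma^{-n}\eta}\varphi_{\sigma^{-n}\eta})(y)\,\psi(\eta,y)\,d\mu_\eta(y)\,d\mathbb{P}(\eta).
\]
Finally I would decompose $\varphi=\bar\varphi+c$ with $\bar\varphi_\omega:=\varphi_\omega-\mu_\omega(\varphi_\omega)$ fibrewise centred ($\|\bar\varphi_\omega\|_{\Lip}\le2\|\varphi_\omega\|_{\Lip}$) and $c(\omega):=\mu_\omega(\varphi_\omega)$ a function of $\omega$ only, with $\mathbb{E}_{\mathbb{P}}[c]=\mu(\varphi)=0$ and $|c|\le\|\varphi_\cdot\|_{\Lip}\in L^p(\mathbb{P})$.

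\textbf{Proof of \eqref{1st}.} For the centred part, \eqref{dec2} (with $s=p$, available since $\mu_\omega(\bar\varphi_\omega)=0$) bounds $\|L^n_{\sigma^{-n}\eta}\bar\varphi_{\sigma^{-n}\eta}\|_{L^p(\mu_\eta)}\le K(\sigma^{-n}\eta)a_n(1+\|\varphi_{\sigma^{-n}\eta}\|_{\Lip})$, and Hölder in the fibre (exponents $p,p'$, using $p'\le(p/2)'$ since $p\ge2$) followed by Hölder in $\eta$ (exponents $p/2$ and $(p/2)'$, the latter absorbing $\|\psi\|_{L^{(p/2)'}(\mu)}\le1$, and the former finite by $K,\|\varphi_\cdot\|_{\Lip}\in L^p$) give $|\mu(\bar\varphi\cdot(\psi\circ\tau^n))|\lesssim a_n$, hence $\|\mathcal{K}^n\bar\varphi\|_{L^{p/2}(\mu)}\lesssim a_n\le a_{[n/2]}$. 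For the base part, $L^n_\omega1=1$ collapses the displayed integral to $\mathbb{E}_{\mathbb{P}}[c(\sigma^{-n}\cdot)\,\Psi_n]=\mathrm{Cov}_{\mathbb{P}}(c\circ\sigma^{-n},\Psi_n)$, where $\Psi_n(\eta):=\mu_\eta(\psi(\eta,\cdot))$ has $\mathbb{E}_{\mathbb{P}}[\Psi_n]=\mu(\psi)$. I would approximate $h_\eta$ in $L^p(m)$ by the finite-window density $\mathcal{L}^{[n/2]}_{\sigma^{-[n/2]}\eta}1$, which depends only on $X_{-[n/2]},\dots,X_{-1}$ and differs from $h_\eta$ by $\le CK(\sigma^{-[n/2]}\eta)a_{[n/2]}$ by \eqref{Exp2}; this replaces $\Psi_n$ by a $\sigma(X_j:j\ge-[n/2])$-measurable function up to a covariance error $\lesssim a_{[n/2]}$. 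Since $c\circ\sigma^{-n}$ is $\sigma(X_j:j\le-n)$-measurable, the $\alpha$-mixing covariance inequality (Davydov) with exponent $p$ for $c\circ\sigma^{-n}$ and $(p/2)'$ for the truncated $\Psi_n$ (whose $L^{(p/2)'}(\mathbb{P})$-norm is $\lesssim1$ by Jensen, as $\mathcal{L}^{[n/2]}_{\sigma^{-[n/2]}\eta}1\,dm$ is a probability measure, together with $\|\psi\|_{L^{(p/2)'}(\mu)}\le1$) contributes $\lesssim\alpha(n/2)^{\,1-1/p-1/(p/2)'}=\alpha(n/2)^{1/p}$. Summing the two parts gives \eqref{1st}.

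\textbf{Proof of \eqref{2nd}.} Now $p\ge3$, $\tfrac3p=\tfrac1p+\tfrac2p$, and $\mathcal{K}^i$ contracts $L^{p/3}(\mu)$. If $i<j$: since $\mathcal{K}1=1$ and $|\mu(\varphi\mathcal{K}^j\varphi)|\le\|\varphi\mathcal{K}^j\varphi\|_{L^{p/3}(\mu)}$,
\[
\|\mathcal{K}^i(\varphi\mathcal{K}^j\varphi)-\mu(\varphi\mathcal{K}^j\varphi)\|_{L^{p/3}(\mu)}\le2\|\varphi\mathcal{K}^j\varphi\|_{L^{p/3}(\mu)}\le2\|\varphi\|_{L^p(\mu)}\|\mathcal{K}^j\varphi\|_{L^{p/2}(\mu)}\lesssim\gamma_j=\gamma_{\max(i,j)},
\]
using \eqref{1st}. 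If $i\ge j$: by duality it suffices to bound $\mu(\mathcal{K}^i(\varphi\mathcal{K}^j\varphi)\psi)-\mu(\varphi\mathcal{K}^j\varphi)\mu(\psi)$ over $\mathcal{F}_0$-measurable $\psi$ with $\|\psi\|_{L^{(p/3)'}(\mu)}\le1$. The identity $\mu(\mathcal{K}^i(\varphi\mathcal{K}^j\varphi)\psi)=\mu(\varphi\cdot(\varphi\circ\tau^j)\cdot(\psi\circ\tau^{i+j}))$ and the twisted cocycle rule $L^j_\omega(u\cdot(v\circ T^j_\omega))=v\,L^j_\omega u$ (applied twice) rewrite the fibrewise integrand, after $\eta=\sigma^{i+j}\omega$, as
\[
L^i_{\sigma^{-i}\eta}\bigl(\varphi_{\sigma^{-i}\eta}\cdot L^j_{\sigma^{-(i+j)}\eta}\varphi_{\sigma^{-(i+j)}\eta}\bigr)=\bigl[L^i_{\sigma^{-i}\eta}\bigl(\varphi_{\sigma^{-i}\eta}\,L^j_{\sigma^{-(i+j)}\eta}\varphi_{\sigma^{-(i+j)}\eta}\bigr)\bigr]_\eta+F(\eta),
\]
whose centred part $[\,\cdot\,]_\eta$ is exactly of the form bounded in \eqref{DEC}, and where $F(\eta):=\mu_{\sigma^{-i}\eta}(\varphi_{\sigma^{-i}\eta}\,L^j_{\sigma^{-(i+j)}\eta}\varphi_{\sigma^{-(i+j)}\eta})$ satisfies $\mathbb{E}_{\mathbb{P}}[F]=\mu(\varphi\mathcal{K}^j\varphi)$. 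The $[\,\cdot\,]_\eta$-term is handled through \eqref{DEC} (outer time $i$, rate $a_i$, with $K,\|\varphi_\cdot\|_{\Lip},\|\varphi_\cdot\|_{\Lip}\in L^p$ accounting for the $L^{p/3}$) exactly as $\bar\varphi$ was above, giving $\lesssim a_i\le a_{[i/2]}$; the $F$-term produces $\mathrm{Cov}_{\mathbb{P}}(F,\Psi)$ with $\Psi=\mu_\cdot(\psi(\cdot,\cdot))$, and since $F$ is $\sigma(X_j:j\le-i)$-measurable, $\mathbb{E}_{\mathbb{P}}[F]=\mu(\varphi\mathcal{K}^j\varphi)$ and $\|F\|_{L^{p/2}(\mathbb{P})}<\infty$ (by the $L^1(\mu_\cdot)$-contraction of $L^j$), the same density-approximation ($m=[i/2]$) plus Davydov argument — now with exponents $p/2$ and $(p/3)'$, so $1-2/p-1/(p/3)'=1/p$ — bounds it by $\lesssim a_{[i/2]}+\alpha(i/2)^{1/p}$. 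Altogether the $i\ge j$ case is $\lesssim a_{[i/2]}+\alpha(i/2)^{1/p}=\gamma_i=\gamma_{\max(i,j)}$, which finishes the proof.

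\textbf{The main obstacle.} The proof is really a careful separation of scales, and the delicate points are: (i)~writing $\mathcal{K}^n$, and $\mathcal{K}^{i+j}$ of the product observable, in quenched terms and tracking precisely which coordinates $X_j$ each object depends on, so that \eqref{DEC}, \eqref{Exp2} and the mixing inequality act on disjoint, sufficiently separated blocks; (ii)~the invariant density $h_\omega$ depends on the entire past and ``bridges'' the gap between the base observable and the conditioning $\sigma$-algebra $\mathcal{F}_0$, forcing its replacement by a finite-window density — this is exactly what generates the $a_{[n/2]}$ term; and (iii)~neither $\varphi$ nor $h_\omega$ is bounded (one only has $\|\varphi_\cdot\|_{\Lip},\|h_\cdot\|_{L^p(m)},K\in L^p(\mathbb{P})$ and $h_\cdot\ge c$), so one must use the $L^p$-moment version of the mixing inequality — this attaches the exponent $1/p$ to $\alpha$ and, once a product of two copies of $\varphi$ enters, degrades the output norm from $L^{p/2}$ in \eqref{1st} to $L^{p/3}$ in \eqref{2nd} — so that the bookkeeping of the successive Hölder bounds is the main thing to get right; the hypothesis $t>1/p$ is needed only to guarantee $K(\sigma^{-n}\cdot)a_n\to0$ a.s., so that $h_\omega$ is the genuine $L^p(m)$-limit of the finite-window densities.
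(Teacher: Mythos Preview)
Your proposal is correct and follows the same strategy as the paper: argue by duality, disintegrate along fibres, separate the fibrewise-centred piece (controlled by \eqref{DEC}/\eqref{dec2}) from the base covariance $\int \mu_\omega(\varphi_\omega)\,\mu_{\sigma^n\omega}(g_{\sigma^n\omega})\,d\mathbb P$, approximate the equivariant density by the finite-window iterate $\mathcal L^{[n/2]}\mathbf 1$ via \eqref{Exp2}, and finish with the $\alpha$-mixing covariance inequality. The only differences are cosmetic --- your change of variables $\eta=\sigma^n\omega$ and the upfront splitting $\varphi=\bar\varphi+c$, which the paper performs implicitly after applying $L^n_\omega$ --- and one step where your justification is slightly elliptic: bounding the $L^{(p/2)'}(\mathbb P)$-norm of the truncated $\Psi_n$ requires not just Jensen but also the comparison $\mathcal L^{[n/2]}_{\sigma^{-[n/2]}\eta}\mathbf 1\le c^{-1}h_\eta$ coming from \eqref{ulb} (the paper makes this explicit via $L^{n-[n/2]}_{\sigma^{[n/2]}\omega}(1/h_{\sigma^{[n/2]}\omega})\le 1/c$).
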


\begin{proof}
Let us first prove \eqref{1st}.
Let $q$ be the conjugate exponent of $p/2$.
Since $L^{q}(\mu)$ is the dual of $L^{p/2}(\mu)$  and $G$ and $\mathcal K^n\varphi$ are $\cF_0$-measurable, it is enough to show that for every $g\in L^q(\mathcal M,\mathcal F_0,\mu)$ with $\|g\|_{L^q(\mu)}\leq 1$ we have 
$$
\left|\int_{\Omega \times M} g\cdot(\mathcal K^n \varphi)d\mu\right|\leq\gamma_n.
$$
%where $\gamma_n$ is the desired upper bound (that is, the right-hand side of~\eqref{1st}).
To achieve this, since $\mathcal K^n$ is the dual of the restriction of the Koopman operator $f\to f\circ\tau^n$ acting on $\cF_0$-measurable functions,
\begin{equation}\label{CrucialRel}
\begin{split}
\int_{\Omega \times M} g\cdot(\mathcal K^n \varphi)\, d\mu=\int_{\Omega \times M} \varphi\cdot (g\circ\tau^n) \, d\mu &=
\int_{\Omega }\left(\int_{M} \varphi_\omega \cdot (g_{\sigma^n\omega }\circ T_\omega^n)\, d\mu_\omega\right)\, d\mathbb P(\omega)\\
&=\int_\Omega\left(\int_{M} (L_\omega^n \varphi_{\omega})\cdot g_{\sigma^n\omega}\, d\mu_{\sigma^n\omega}\right)\, d\mathbb P(\omega),
\end{split}
\end{equation}
where $g_\omega:=g(\omega, \cdot)$.

By \eqref{dec2},
$$
\left\|L_\omega^n \varphi_{\omega}-\mu_\omega(\varphi_\omega)\right\|_{L^{p/2}(\mu_{\sigma^n\omega})}\le 
\left\|L_\omega^n \varphi_{\omega}-\mu_\omega(\varphi_\omega)\right\|_{L^p(\mu_{\sigma^n\omega})}\leq K(\omega) (1+\|\varphi_\omega \|_{\Lip})a_n,
$$
for $\mathbb P$-a.e. $\omega \in \Omega$ and $n\in \mathbb N$.
Hence, also using the $\sigma$-invariance of $\mathbb P$ and the H\"older inequality, %(recall that $p\ge 2$),
$$
\int_{\Omega \times M} g\cdot(\mathcal K^n \varphi)\, d\mu=\int_\Omega \mu_\omega(\varphi_{\omega})\mu_{\sigma^n\omega}(g_{\sigma^n\omega})\, d\mathbb P(\omega)+I,
$$
where
\[
I:=\int_\Omega \left (\int_M (L_\omega^n \varphi_\omega-\mu_\omega(\varphi_\omega))g_{\sigma^n \omega}\, d\mu_{\sigma^n \omega}\right )\, d\mathbb P(\omega)
\]
and $|I|\leq C a_n$ for some constant $C>0$ independent of $n$.  In fact, 
\[
C=\|K\|_{L^p(\mathbb P)}\cdot \|g\|_{L^q(\mu)}\cdot \left (1+ \| \|\varphi_\omega\|_{\Lip}\|_{L^p(\mathbb P)}\right ).
\]

Next, note that
$$
\mu_{\sigma^n\omega}(g_{\sigma^n\omega})=m(g_{\sigma^n\omega}h_{\sigma^n\omega}).
$$
By \eqref{Exp2} we have 
$$
\left\|h_{\sigma^n\omega}-\mathcal L_{\sigma^{[n/2]}\omega}^{n-[n/2]}\textbf{1}\right\|_{L^p(m)}\leq CK(\sigma^{[n/2]}\omega)a_{[n/2]}
$$
for $\mathbb P$-a.e. $\omega \in \Omega$ and $n\in \mathbb N$, where $C>0$ is independent of these variables.  Therefore, by the H\"older inequality, 
%if $u$ is the conjugate exponent of $p$ (note that $u\le q$) then
\begin{equation}\label{G est}
\begin{split}
\left|m(g_{\sigma^n\omega}h_{\sigma^n\omega})-m(g_{\sigma^n\omega}\cL_{\sigma^{[n/2}]\omega}^{n-[n/2]}\textbf{1})\right| &\leq CK(\sigma^{[n/2]}\omega)a_{[n/2]}\|g_{\sigma^n\omega}\|_{L^q(m)}\\
&\leq Cc^{-1}K(\sigma^{[n/2]}\omega)a_{[n/2]}\|g_{\sigma^n\omega}\|_{L^q(\mu_{\sigma^n\omega})},
\end{split}
\end{equation}
for $\mathbb P$-a.e. $\omega \in \Omega$ and $n\in \mathbb N$, where $C>0$ is independent of these. We note that in the last inequality above, we used~\eqref{ulb}. Hence, by the H\"older inequality, we have
\begin{equation}\label{IJ}
\int_{\Omega \times M} g\cdot(\mathcal K^n \varphi)\, d\mu=\int _\Omega \mu_\omega(\varphi_{\omega})m(g_{\sigma^n\omega}\cL_{\sigma^{[n/2]}\omega}^{n-[n/2]}\textbf{1})\, d\mathbb P(\omega)+I+J,
\end{equation}
where 
\[
J:=\int_\Omega \mu_\omega(\varphi_\omega)\left (m(g_{\sigma^n\omega}h_{\sigma^n\omega})-m(g_{\sigma^n\omega}\cL_{\sigma^{[n/2]}\omega}^{n-[n/2]}\textbf{1})\right )\, d\mathbb P(\omega),
\]
and $|J|\leq Ca_{[n/2]}$ for some $C>0$ independent of $n$.

Next, using \eqref{Exp2}, $a_n=O(n^{-t})$ for $t>\frac 1 p$, $K\in L^p(\Omega, \mathcal F, \mathbb P)$ and~\cite[Lemma 3]{DH}, we have that \[h_\omega=\lim_{n\to\infty}\cL_{\sigma^{-n}\omega}^n\textbf{1} \quad \text{in $L^p(m)$, for $\mathbb P$-a-e. $\omega \in \Omega$.}\] Therefore, for $\mathbb P$-a.e. $\omega \in \Omega$,  $h_\omega$ depends only on the coordinates $\omega_j$ for $j\leq 0$ and consequently 
$$
\mu_\omega(\varphi_{\omega})=F(\omega_j; j\leq 0),
$$
for some measurable function $F$ such that $\|F\|_{L^p(\mathbb P)}\leq \|\varphi\|_{L^p(\mu)}$. Observe also that the random variable 
$$
A_n(\omega)=m(g_{\sigma^n\omega}\mathcal L_{\sigma^{[n/2]}\omega}^{n-[n/2]}\textbf{1})
$$
depends only on $\omega_j$, $j\geq [n/2]$ since $g_\omega(x)$ is a function of $x$ and $\omega_j, j\geq0$ (i.e. it factors through $\pi$).
Due to~\eqref{ulb} we have
$$
|A_n(\omega)|=\left|\mu_{\sigma^n\omega}\big(g_{\sigma^n\omega}L^{n-[n/2]}_{\sigma^{[n/2]}\omega}(1/h_{\sigma^{[n/2]}\omega})\big)\right|\leq \frac 1 c \mu_{\sigma^n\omega}(|g_{\sigma^n\omega}|),
$$
for $\mathbb P$-a.e. $\omega \in \Omega$ and $n\in \mathbb N$.
Thus, using also \cite[Eq. (1.2.17)]{HK}  we see that there is a constant $C>0$ such that
$$
\left|\int_\Omega \mu_\omega(\varphi_{\omega})m(g_{\sigma^n\omega}\cL_{\sigma^{[n/2]}\omega}^{n-[n/2]}\textbf{1})\, d\mathbb P(\omega)\right|\leq C(\alpha(n/2))^{1-1/p-1/q}
$$
for $n\in \mathbb N$,
where we have taken into account that $\int_\Omega \mu_\omega(\varphi_\omega)\, d\mathbb P(\omega)=\int_{\Omega \times M}\varphi \, d\mu=0$. This, together with \eqref{IJ} and the previous estimates on $I$ and $J$, proves \eqref{1st}.

Now, let us prove \eqref{2nd}.  First, since $\mathcal K$ weakly contracts the $L^s(\mu)$ norms (being defined through conditional expectations) we have
$$
\left\|\mathcal K^i(\varphi\mathcal K^j \varphi)-\mu\big(\varphi \mathcal K^j \varphi\big)\right\|_{L^{p/3}(\mu)}\leq 2\|\varphi\|_{L^{p}(\mu)} \cdot \|\mathcal K^j\varphi\|_{L^{p/2}(\mu)}.
$$
This together with \eqref{1st} provides the desired estimate when $j\geq i$. The estimate in case $i>j$ is carried out similarly to the proof of \eqref{1st}. Let $u$ be the conjugate exponent of $p/3$ and let $g\in L^u(\Omega\times M, \mathcal F_0, \mu)$ be such that $\|g\|_{L^u(\mu)}\leq 1$. Let us first show that 
\begin{equation}\label{First}
\int_{\Omega \times M} \mathcal K^i(\varphi\mathcal K^j\varphi)g\,d\mu=\int_\Omega \mu_{\omega}\big(\varphi_\omega\cdot (\varphi_{\sigma^j\omega}\circ T_\omega^j)\big)\mu_{\sigma^{i+j}\omega}(g_{\sigma^{i+j}\omega})\, d\mathbb P(\omega)+I,
\end{equation}
where $|I|\leq C_2\gamma_{i}$ and $C_2>0$ is some constant.

In order to prove \eqref{First}, using that $\mathcal K$ satisfies the duality relation and the disintegration $\mu=\int \mu_\omega d\mathbb P(\omega)$, we first have
\begin{equation}\label{Eq}
\begin{split}
\int_{\Omega \times M} \mathcal K^i(\varphi\mathcal K^j \varphi)g\,d\mu &=\int_{\Omega \times M} (\varphi \mathcal K^j\varphi)\cdot g\circ\tau^i\,d\mu \\
&=
\int_{\Omega \times M} \mathcal K^j \varphi \cdot\big(\varphi \cdot (g\circ\tau^i)\big)d\mu \\ &=\int_{\Omega \times M}
\big(\varphi\cdot(\varphi \circ\tau^j)\big)\cdot g\circ\tau^{i+j}\,d\mu\\
&=\int_\Omega \left (\int_M \varphi_\omega\cdot (\varphi_{\sigma^j\omega}\circ T_\omega^{j})\cdot (g_{\sigma^{i+j}\omega}\circ T_\omega^{i+j})d\mu_\omega\right)d\mathbb P(\omega)\\
&=\int_\Omega \left(\int_M L^{i+j}_\omega\big(\varphi_\omega \cdot (\varphi_{\sigma^j\omega}\circ T_\omega^j)\big)g_{\sigma^{i+j}\omega}d\mu_{\sigma^{i+j}\omega}\right)d\mathbb P(\omega).
\end{split}
\end{equation}
Next, since $L_\omega^n((f\circ T_\omega^n)g)=fL_\omega^n g$ for every functions $f, g$ and $n\in \mathbb N$, we have
$$
L^{i+j}_\omega\big(\varphi_\omega\cdot (\varphi_{\sigma^j\omega}\circ T_\omega^j)\big)=L_{\sigma^j\omega}^i(\varphi_{\sigma^j\omega}L_{\omega}^j\varphi_\omega).
$$
Thus,
$$
\int_{\Omega \times M} \mathcal K^i(\varphi \mathcal K^j\varphi)g\,d\mu=\int_\Omega \left(\int_ML_{\sigma^j\omega}^i(\varphi_{\sigma^j\omega}L_{\omega}^j\varphi_\omega)g_{\sigma^{i+j}\omega}d\mu_{\sigma^{i+j}\omega}\right)d\mathbb P(\omega).
$$
Now \eqref{First} follows from centering the above integrand and then using \eqref{dec2} and the H\"older inequality. 

It remains to estimate 
$$
\int_\Omega \mu_{\omega}\big(\varphi_\omega\cdot (\varphi_{\sigma^j\omega}\circ T_\omega^j)\big)\mu_{\sigma^{i+j}\omega}(g_{\sigma^{i+j}\omega})\, d\mathbb P(\omega).
$$
This is done exactly as in the proof of \eqref{1st}. In fact, $g_{\sigma^{i+j}\omega}$ depends only on $\omega_k$, $k\geq i+j$ and $\mu_{\sigma^{i+j}\omega}$ can be approximated on average by $\cF_{i+j-m,i+j+m}$ measurable functions within $a_{m}$ (that is, $a_m$ controls the error term). Similarly $\varphi_\omega\cdot (\varphi_{\sigma^j\omega}\circ T_\omega^j)$ can be approximated by $\cF_{-\infty,j+m}$ measurable functions within $O(a_m)$. Taking $m=[i/2]$ yields \eqref{2nd}. 
\end{proof}

We are now in a position to formulate an annealed version of Theorem~\ref{T-ASIP}.
\begin{theorem}
Let the assumptions of Proposition~\ref{MainL} be in force with $p=8$ and $t>16$. In addition, suppose that $\alpha(n)=O(n^{-t})$. Then the quantity
\[
\Sigma^2=\int_{\Omega \times M}\varphi^2\, d\mu+2\sum_{n=1}^\infty\int_{\Omega \times M}\varphi \cdot (\varphi\circ \tau^n)\, d\mu
\]
is finite and nonnegative. Moreover, by enlarging the probability space $(\Omega \times M, \mu)$ if necessary, there is a sequence $(Z_i)_i$ of i.i.d Gaussian random variables with mean $0$ and variance $\Sigma^2$ such that 
\[
\sup_{1\le k\le n} \left |\sum_{i=0}^{k-1}\left (\varphi \circ \tau^i-\int_{\Omega \times M}\varphi\, d\mu\right )-Z_i\right |=O(n^{1/4}(\log n)^{1/2}(\log \log n)^{1/4}), \quad \text{$\mu$-a.s.}
\]\end{theorem}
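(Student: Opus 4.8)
The plan is to write $S_n\varphi:=\sum_{i=0}^{n-1}\varphi\circ\tau^i$ (recall $\int_{\Omega\times M}\varphi\,d\mu=0$) as a reverse‑martingale sum plus a negligible coboundary with respect to the transfer operator $\mathcal K$ of $\tau$, and then invoke the strong invariance principle for reverse martingale differences of Cuny--Merlev\`ede~\cite{CM}, whose conclusion in the $L^4$ case is exactly an ASIP with rate $O(n^{1/4}(\log n)^{1/2}(\log\log n)^{1/4})$. First I would record the quantitative decay available from Proposition~\ref{MainL}: since $p=8$, $\alpha(n)=O(n^{-t})$ and $t>16$, we get $\gamma_n=C\big(a_{[n/2]}+\alpha(n/2)^{1/8}\big)=O(n^{-t/8})$ with $t/8>2$, so $\gamma_n=o(n^{-2})$ and, in particular, $\sum_n n\gamma_n<\infty$. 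Because $\varphi$ is $\mathcal F_0$-measurable (it depends only on $\omega_0$ and $x$) and $\mathcal K$ preserves $\mathcal F_0$-measurability, $G:=\sum_{n\ge 1}\mathcal K^n\varphi$ is well defined in $L^{p/2}(\mu)=L^4(\mu)$ by~\eqref{1st}, and $m:=\varphi+G-G\circ\tau$ satisfies $\mathcal K m=\mathcal K\varphi+(G-\mathcal K\varphi)-G=0$. Hence, using that $\tau^{-1}\mathcal F_0\subset\mathcal F_0$ (which holds because $\mathcal L_\omega$ and $\varphi_\omega$ depend only on $\omega_0$, so $\pi\circ\tau$ factors through $\pi$), the sequence $(m\circ\tau^k)_{k\ge 0}$ is a reverse martingale difference sequence for the decreasing filtration $(\tau^{-k}\mathcal F_0)_k$, and telescoping gives $S_n\varphi=M_n+(G\circ\tau^n-G)$ with $M_n:=\sum_{k=0}^{n-1}m\circ\tau^k$. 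Moreover $m\in L^4(\mu)$, since $\varphi\in L^p(\mu)=L^8(\mu)$ (from the integrability of $\omega\mapsto\|\varphi_\omega\|_{\Lip}$) and $G,\,G\circ\tau\in L^4(\mu)$.

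Next I would dispose of the coboundary term. As $\|G\circ\tau^n\|_{L^4(\mu)}=\|G\|_{L^4(\mu)}$, the Markov inequality gives $\mu(|G\circ\tau^n|>n^{1/4}(\log n)^{1/2})\le \|G\|_{L^4(\mu)}^4\,n^{-1}(\log n)^{-2}$, which is summable; by Borel--Cantelli, $\sup_{k\le n}|G\circ\tau^k|=O(n^{1/4}(\log n)^{1/2})=o\big(n^{1/4}(\log n)^{1/2}(\log\log n)^{1/4}\big)$ $\mu$-a.s. Thus it suffices to prove the ASIP for $M_n$. Along the way I would also identify the variance: by orthogonality of reverse martingale differences, $\|M_n\|_{L^2(\mu)}^2=n\|m\|_{L^2(\mu)}^2$, and since the coboundary is $O(1)$ in $L^2(\mu)$ we get $\|S_n\varphi\|_{L^2(\mu)}^2=n\|m\|_{L^2(\mu)}^2+O(1)$, so $\Sigma^2:=\|m\|_{L^2(\mu)}^2\ge 0$; the Green--Kubo identity $\Sigma^2=\int\varphi^2\,d\mu+2\sum_{n\ge 1}\int\varphi\,(\varphi\circ\tau^n)\,d\mu$ then follows from $\int\varphi\,(\varphi\circ\tau^n)\,d\mu=\int(\mathcal K^n\varphi)\varphi\,d\mu$ together with the absolute summability furnished by~\eqref{1st} (which applies as $p\ge 2$).

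It remains to apply~\cite{CM} to $M_n$. Besides $m\in L^4(\mu)$, stationarity, and ergodicity of $(\mathcal M,\tau,\mu)$ (Proposition~\ref{properg}, applicable since Assumption~\ref{P assum} holds with $\mathcal I=\{8\}$), the remaining hypothesis is a quantitative stabilization of the conditional variances $v_k:=\mathbb E_\mu(m^2\circ\tau^k\mid\tau^{-(k+1)}\mathcal F_0)=(\mathcal K(m^2))\circ\tau^{k+1}$. Writing $\bar m{}^2:=m^2-\Sigma^2$ (mean zero, $\mathcal F_0$-measurable, in $L^2(\mu)$ since it is a sum of products of $L^4$-functions), one has $\sum_{k<n}(v_k-\Sigma^2)=\sum_{k=1}^{n}(\mathcal K\bar m{}^2)\circ\tau^k$, and a direct second-moment computation bounds this in terms of $\sum_{d\ge 1}\|\mathcal K^d\bar m{}^2\|_{L^2(\mu)}$, i.e.\ in terms of the speed at which $\mathcal K^n(m^2)\to\mu(m^2)=\Sigma^2$. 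Expanding $m^2=\varphi^2+G^2+(G\circ\tau)^2+2\varphi G-2\varphi(G\circ\tau)-2G(G\circ\tau)$ with $G=\sum_{j\ge 1}\mathcal K^j\varphi$, the terms of the form $\varphi^2$ and $\varphi\,\mathcal K^j\varphi$ (from $\varphi G$) are controlled directly by~\eqref{2nd}, which bounds $\|\mathcal K^n(\varphi\,\mathcal K^j\varphi)-\mu(\varphi\,\mathcal K^j\varphi)\|_{L^{p/3}(\mu)}\le C\gamma_{\max(n,j)}$ and $L^{8/3}(\mu)\subset L^2(\mu)$; summing over $j\ge 1$ and then over $n$ converges precisely because $\sum_n n\gamma_n<\infty$. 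The genuinely quadratic pieces $\mathcal K^i\varphi\cdot\mathcal K^j\varphi$ arising from $G^2$, and the mixed pieces from $G(G\circ\tau)$ (which one first reduces to $G^2$-type terms via $\mathcal K\big((g\circ\tau)h\big)=g\,\mathcal K h$), are split according to whether $\min(i,j)\le n/2$ or not: in the first case one factor has $L^4(\mu)$-norm at most $\gamma_{[n/2]}$ by~\eqref{1st}; in the second case one applies~\eqref{1st}/\eqref{2nd} with a large number of iterates of $\mathcal K$. Feeding the resulting bounds into the reverse-martingale ASIP of~\cite{CM} produces, for $\mu$-a.e.\ point, i.i.d.\ Gaussians $(Z_i)_i$ of mean $0$ and variance $\Sigma^2$ with $\sup_{1\le k\le n}|M_k-\sum_{i=0}^{k-1}Z_i|=O\big(n^{1/4}(\log n)^{1/2}(\log\log n)^{1/4}\big)$ $\mu$-a.s.; combining with the coboundary estimate of the previous paragraph yields the claim.

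The main obstacle will be the variance-stabilization step: the pieces of $m^2$ coming from $G^2$ and $G(G\circ\tau)$ are not of the ``one factor equals $\varphi$'' shape covered by~\eqref{2nd}, so they must be dismantled by hand, trading decay between the inner iterates $\mathcal K^i\varphi,\mathcal K^j\varphi$ and the outer iterate $\mathcal K^n$. This is the annealed counterpart of the most delicate part of the quenched proof of Theorem~\ref{T-ASIP} — the treatment of the terms~\eqref{term1}, \eqref{term4} and~\eqref{term5} — and it is here that the full quantitative strength of Proposition~\ref{MainL}, hence the numerology $p=8$ and $t>16$, is consumed.
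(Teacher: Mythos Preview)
Your strategy is correct and is essentially the same route as the paper: both appeal to Cuny--Merlev\`ede~\cite{CM}. The difference is one of packaging. The paper invokes \cite[Theorem 3.2]{CM} directly, whose hypotheses are formulated in terms of $\varphi$ rather than the martingale difference $m$: condition (3.6) there asks for summability of $n^{5/2}(\log n)^3\|\mathcal K^n\varphi\|_{L^4(\mu)}^4$ and $n(\log n)^3\|\mathcal K^n\varphi\|_{L^2(\mu)}^2$, while condition (3.7) asks for summability of $n^{-2}(\log n)^3\big(\sum_{i=1}^n\sum_{j=0}^{n-i}\|\mathcal K^i(\varphi\,\mathcal K^j\varphi)-\mu(\varphi\,\mathcal K^j\varphi)\|_{L^2(\mu)}\big)^2$. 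These are verified in a few lines from~\eqref{1st} and~\eqref{2nd}, using $\gamma_n=O(n^{-t/8})$ with $t>16$ (so that $\sum_{i,j}\gamma_{\max(i,j)}=O(1)$).

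You instead build the martingale--coboundary decomposition explicitly and try to feed $m$ into an earlier theorem of~\cite{CM}, which forces you to verify the variance-stabilization condition for $m^2$ by hand. This is exactly the work that \cite[Theorem 3.2]{CM} already performs when passing from (3.6)--(3.7) to the martingale ASIP; you are re-deriving that reduction. Your treatment of the $G^2$ and $G(G\circ\tau)$ pieces is the one place your sketch is genuinely incomplete: \eqref{2nd} only controls $\mathcal K^n(\varphi\,\mathcal K^j\varphi)$, not $\mathcal K^n(\mathcal K^i\varphi\cdot\mathcal K^j\varphi)$ for general $i$, and your splitting argument does not supply decay in $n$ when $\min(i,j)$ is small. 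The cleanest fix is simply to cite \cite[Theorem 3.2]{CM} and check (3.6) and (3.7) directly from~\eqref{1st} and~\eqref{2nd}, which is what the paper does; this sidesteps the expansion of $m^2$ entirely. Your coboundary estimate and the identification of $\Sigma^2$ are fine.
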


\begin{proof}
Throughout the proof, $C$ will denote a generic positive constant independent on $n$.
It follows from~\eqref{1st} that 
\[
\begin{split}
\sum_{n\ge 2}n^{5/2}(\log n)^3\|\mathcal K^n \varphi\|_{L^4(\mu)}^4 &\le C\sum_{n\ge 2}n^{5/2}(\log n)^3\big(a_{[n/2]}+(\alpha(n/2))^{1/8}\big)^4 \\
&\le C\sum_{n\ge 2}n^{5/2}(\log n)^3 \left (a_{[n/2]}^4+(\alpha(n/2))^{1/2}\right)\\
&\le C\sum_{n\ge 2}n^{5/2}(\log n)^3 n^{-t/2}<+\infty,
\end{split}
\]
as $t>7$. Similarly, applying~\eqref{1st} with $p=4$ we have 
\[
\begin{split}
\sum_{n\ge 2}n(\log n)^3\|\mathcal K^n \varphi\|_{L^2(\mu)}^2 &\le C \sum_{n\ge 2}n(\log n)^3 n^{-t/2}<+\infty,
\end{split}
\]
as $t>4$. Hence, \cite[(3.6)]{CM} holds.

On the other hand, \eqref{2nd} for $p=6$ gives 
\[
\begin{split}
\sum_{n\ge 2}\frac{(\log n)^3}{n^2}\left (\sum_{i=1}^n\sum_{j=0}^{n-i}\left\|\mathcal K^i(\varphi \mathcal K^j \varphi)-\mu\big(\varphi \mathcal K^j\varphi \big)\right\|_{L^{2}(\mu)}\right )^2 &\le 
\sum_{n\ge 2}\frac{(\log n)^3}{n^2}\left (\sum_{i=1}^n\sum_{j=0}^{n-i}\gamma_{\max \{i, j\}}\right)^2,
\end{split}
\]
with $\gamma_n=O(n^{-t/8})$. It is easy to show that
\[
\sum_{i=1}^n\sum_{j=0}^{n-i}\gamma_{\max \{i, j\}}=O(1),
\]
since $t>16$. Consequently, 
\[
\sum_{n\ge 2}\frac{(\log n)^3}{n^2}\left (\sum_{i=1}^n\sum_{j=0}^{n-i}\left\|\mathcal K^i(\varphi \mathcal K^j \varphi)-\mu\big(\varphi \mathcal K^j\varphi \big)\right\|_{L^{2}(\mu)}\right )^2<+\infty,
\]
which yields that~\cite[(3.7)]{CM} holds. The conclusion of the theorem now follows from~\cite[Theorem 3.2]{CM}.
\end{proof}
\begin{remark}
Similarly to the previous section, we can also show that $\|S_n\varphi\|_{L^s}=O(\sqrt n)$ for $s
\leq p/2$ and get CLT rates $O(n^{-1/5})$. Indeed, these results relied only on martingale approximation. However, we decided not to include full statements in order not to overload the paper. 
\end{remark}
	
	\section{Revisiting quenched memory loss for random LSV maps}
	
	%Let $(\Omega, \mathcal F, \mathbb P)$ be an arbitrary probability space equipped with an invertible $\mathbb P$-preserving transformation $\sigma \colon \Omega \to \Omega$. We assume that $(\Omega,\mathcal F,\mathbb P,\sigma)$ is ergodic.
    By $m$, we denote the Lebesgue measure on $M=[0, 1]$.
    %Denote by $m$ the 
	%Lebesgue measure on $[0,1]$.
	Recall that for each $\beta\in (0, 1)$ the associated Liverani--Saussol--Vaienti (LSV) map (introduced in~\cite{LSV}) is given by 
	\[
	T_\beta(x)=\begin{cases}
		x(1+2^\beta x^\beta) & 0\le x<\frac 1 2 \\
		2x-1 & \frac 1 2 \le x\le 1.
	\end{cases}
	\]
	Let $\beta : \Omega \to (0,1)$ be a measurable map and, for 
	each each $\omega \in \Omega$, let $T_\omega  \colon [0, 1] \to [0, 1]$ be the LSV map with parameter $\beta(\omega) \in (0, 1)$.  Set 
    \begin{align}\label{eq:random_lsv}
	T_\omega^n:=T_{\sigma^{n-1}\omega}\circ \ldots \circ T_{\sigma \omega}\circ T_\omega, \quad \omega \in \Omega,  \ n\in \mathbb N.
	\end{align}
	In the same manner, denoting by $\mathcal L_\omega \colon L^1(m)\to L^1(m)$ the transfer operator associated to $T_\omega$ and $m$, set
	\[
	\mathcal L_\omega^n:=\mathcal L_{\sigma^{n-1}\omega}\circ \ldots \circ \mathcal L_{\sigma \omega}\circ \mathcal L_\omega, \quad \omega \in \Omega,  \ n\in \mathbb N.
	\]
	We also set $\mathcal L_\omega^0 :=\Id$ for $\omega \in \Omega$.
	In the sequel, we assume that
	\begin{align}\label{eq:ass_1}
		\beta:=\esssup_{\omega \in \Omega}\beta(\omega)< 1.
	\end{align}
    Next, let $\beta_0=\essinf_{\omega\in\Omega} \beta(\omega)$. Then for every $\beta_0<\gamma<1$ we have 
    \begin{align}\label{eq:ass_2}
		b_0 := \bP( \beta(\omega) \le \gamma  ) > 0.
	\end{align}
    %In fact, otherwise $\beta(\omega)>\gamma\geq\beta_0$ for $\bP$-a.e. $\omega \in \Omega$, which contradicts the definition of $\beta_0$. 
    Henceforth, we shall take an arbitrarily $\gamma>0$ satisfying \eqref{eq:ass_2}. When $\beta_0>0$ we can take $\gamma$ arbitrarily close to $\beta_0$ which would yield the best rates in what follows.
    \begin{remark}
    In many natural circumstances $\beta_0=0$. For example, this is the case when $\beta(\omega)$ is supported on some interval $(0,a), a<1$ and $\mathbb P(\beta(\omega)\in A)=\int_{A}f(x)dx$ for some positive density $f$. This is also trivially the case when $\bP(\beta(\omega)=0)>0$ (assuming that we allow zero values), and many other examples can be given. 
    \end{remark}

	It is proved in~\cite[Proposition 9]{DGTS} that the cocycle of maps $(T_\omega)_{\omega \in \Omega}$ admits a unique random a.c.i.m $\mu$ on $\Omega \times [0, 1]$, which can be identified with a family of probability measures $(\mu_\omega)_{\omega \in \Omega}$ on $[0, 1]$ such that
	\[
	T_\omega^*\mu_\omega=\mu_{\sigma \omega} \quad \text{for $\mathbb P$-a.e. $\omega \in \Omega$.}
	\]
	Moreover, $d\mu_\omega=h_\omega \, dm$ with $h_\omega \in \mathcal C_*\cap \mathcal C_2$ for some $a>1$, where $\mathcal C_*=\mathcal C_*(a)$ and $\mathcal C_2=\mathcal C_2(b_1, b_2)$ are cones as in~\cite[Section 2.2]{DGTS}. That is, $\mathcal C_*$ consists of  $\phi \in C^0(0, 1]\cap L^1(m)$ such that $\phi \ge 0$, $\phi$ is decreasing, $X^{\beta+1}\phi$ is increasing (where $X$ denotes the identity map), and 
	\[
	\int_0^x \phi (t)\, dt\le ax^{1-\beta}\int_0^1 \phi (t)\, dt \quad x\in (0, 1].
	\]
	Moreover, $\mathcal C_2$ consists of all $\phi \in C^2(0, 1]$ so that
	\[
	\phi (x)\ge 0, \quad |\phi'(x)| \le \frac{b_1}{x}\phi(x) \quad \text{and} \quad |\phi''(x)| \le \frac{b_2}{x^2}\phi(x), \quad x\in (0, 1].
	\]
	We stress that the parameters $a$, $b_1$, and $b_2$ depend only on $\beta$.

	For $\delta \in \{ \sigma, \sigma^{-1} \}$, define 
	$S_n^{\delta}(\omega) = \sum_{i=0}^{n-1} \psi \circ  \delta^i(\omega)$, where  $\psi(\omega) = \mathbf{1}_{  (0, \gamma] } ( \beta(\omega) )$. Note that
	$$
	S_n^\sigma(\omega) = \# \{  0 \le j \le n - 1 \: : \:  \beta( \sigma^j \omega ) \le \gamma  \}.
	$$
	Given $\ve \in (0,1/2)$, define 
	\begin{align}\label{eq:N_eps}
		N_\ve(\omega) = \max_{ \delta \in \{\sigma, \sigma^{-1} \} } \max \biggl( \biggl\{ n \ge 1 \: : \:  n^{-1} S_n^{\delta }(\omega) \notin [ b_0(1-\ve), b_0(1+\ve) ]  \biggr\} \cup \{0\} \biggr).
	\end{align}
	%with the convention that $\max \emptyset = 0$.
	From Birkhoff's ergodic theorem it follows that $N_\ve(\omega) < \infty$ for $\bP$-a.e. $\omega \in \Omega$. By definition,
	\begin{align}\label{eq:large_n}
		n \ge N_\ve(\omega) \implies  | n^{-1}S_n^\delta ( \omega ) - b_0  | \le \ve b_0, \quad \delta \in \{\sigma, \sigma^{-1}\}.
	\end{align}
		Let $L_\omega \colon L^1(\mu_\omega)\to L^1(\mu_{\sigma \omega})$ be given by 
	\begin{equation}\label{Lom}
	L_\omega \varphi=\frac{\mathcal L_\omega(\varphi h_\omega)}{h_{\sigma \omega}}, \quad \varphi \in L^1( \mu_\omega).
	\end{equation}
    Here, $\mathcal L_\omega$ is the transfer operator associated with $T_\omega$ with respect to $m$.
	
	\begin{theorem}\label{thm:quenched_ml} Assume \eqref{eq:ass_1} and \eqref{eq:ass_2}. 
		Let $g_i : M \to \bR$ be Lipschitz continuous, $i = 1,2$.
		There exists $\tilde{\ve} \in (0, 1/2)$ depending only on the random dynamical system such that for any $\ve \in (0, \tilde{\ve}]$, any $0 \le s \le i < j$, and for $\bP$-a.e. $\omega \in \Omega$, 
		\begin{align}\label{eq:quenched_ml_1}
			\begin{split}
				& \bigl\Vert  \bigl[  L_{  \sigma^i \omega }^{j-i} \bigl(g_2  L_{ \sigma^s \omega }^{i-s}   (g_1 \bigr)  \bigr]_{ \sigma^j \omega }  \bigr\Vert_{  L^1(  \mu_{ \sigma^j \omega } ) }
				\\
				&\le C    (1 +  \Vert  g_1 \Vert_{ \Lip } )  (1 +  \Vert g_2 \Vert_{ \Lip } )
				(
				1 + N_\ve( \sigma^i \omega) )^{ 1 \vee (1/ \gamma  -  1 ) }  (j - i)^{-1/\gamma + 1}.
			\end{split}
		\end{align}
		Here, 
		$C$ is a positive constant depending only on the random dynamical system. 
	\end{theorem}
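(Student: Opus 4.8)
The plan is to reduce \eqref{eq:quenched_ml_1} to a quantitative loss-of-memory estimate for the Lebesgue transfer cocycle $(\mathcal L_\omega)_{\omega\in\Omega}$ acting on the invariant cone $\mathcal C_*$ of \cite[Section 2.2]{DGTS}, and then to insert the frequency information carried by $N_\ve$. Write $\psi:=g_2\cdot L_{\sigma^s\omega}^{i-s}(g_1)$. From $L_\omega\varphi=\mathcal L_\omega(\varphi h_\omega)/h_{\sigma\omega}$ one gets $L_\omega^n\varphi=\mathcal L_\omega^n(\varphi h_\omega)/h_{\sigma^n\omega}$ and $\mathcal L_\omega^n h_\omega=h_{\sigma^n\omega}$, and since $\mu_{\sigma^j\omega}(L_{\sigma^i\omega}^{j-i}\psi)=\mu_{\sigma^i\omega}(\psi)$ the left-hand side of \eqref{eq:quenched_ml_1} equals
$$\bigl\Vert \mathcal L_{\sigma^i\omega}^{j-i}\bigl(\psi h_{\sigma^i\omega}-\mu_{\sigma^i\omega}(\psi)\,h_{\sigma^i\omega}\bigr)\bigr\Vert_{L^1(m)},$$
an $L^1(m)$-norm of $\mathcal L_{\sigma^i\omega}^{j-i}$ applied to a function of zero $m$-integral; here $\psi h_{\sigma^i\omega}=g_2\,\mathcal L_{\sigma^s\omega}^{i-s}(g_1h_{\sigma^s\omega})$. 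As $\mathcal L_{\sigma^s\omega}$ is a positive operator and $|g_1h_{\sigma^s\omega}|\le\Vert g_1\Vert_\infty h_{\sigma^s\omega}$, we have $|\mathcal L_{\sigma^s\omega}^{i-s}(g_1h_{\sigma^s\omega})|\le\Vert g_1\Vert_\infty h_{\sigma^i\omega}$, hence $|\psi h_{\sigma^i\omega}|\le\Vert g_1\Vert_\infty\Vert g_2\Vert_\infty\,h_{\sigma^i\omega}$. So the inner cocycle enters only through this uniform bound, which is precisely why $N_\ve$ is evaluated at the starting time $\sigma^i\omega$ of the outer cocycle rather than at $\sigma^s\omega$.

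The function $F:=\psi h_{\sigma^i\omega}-\mu_{\sigma^i\omega}(\psi)h_{\sigma^i\omega}$ has zero $m$-integral and satisfies $|F|\le2Mh_{\sigma^i\omega}$ with $M:=\Vert g_1\Vert_\infty\Vert g_2\Vert_\infty$. Since $Mh_{\sigma^i\omega}+\psi h_{\sigma^i\omega}$ is nonnegative, $\le2Mh_{\sigma^i\omega}$, and has the same $m$-integral $\lambda:=M+\mu_{\sigma^i\omega}(\psi)\in[0,2M]$ as $Mh_{\sigma^i\omega}$, one has $F=\lambda\bigl(\lambda^{-1}(Mh_{\sigma^i\omega}+\psi h_{\sigma^i\omega})-h_{\sigma^i\omega}\bigr)$. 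Applying $\mathcal L_{\sigma^i\omega}$ a bounded number $k_0$ of times (depending only on the random dynamical system, and absorbing the merely Lipschitz regularity), the regularization property of the cocycle on $\mathcal C_*$ (\cite[Section 2.2]{DGTS}) places both $\mathcal L_{\sigma^i\omega}^{k_0}(\lambda^{-1}(Mh_{\sigma^i\omega}+\psi h_{\sigma^i\omega}))$ and $h_{\sigma^{i+k_0}\omega}$ in $\mathcal C_*$, with unit $m$-integral. Thus it suffices to bound $\Vert\mathcal L_{\sigma^{\ell}\omega}^{n}(\phi-\tilde\phi)\Vert_{L^1(m)}$ for $\phi,\tilde\phi\in\mathcal C_*$ of equal unit $m$-integral, where $\ell=i+k_0$ and $n=j-i-k_0$; the $O(1)$ shift of the time origin is immaterial below, and the factor $\lambda\le2M\le2(1+\Vert g_1\Vert_{\Lip})(1+\Vert g_2\Vert_{\Lip})$ is pulled out.

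The core estimate is then: for $\phi,\tilde\phi\in\mathcal C_*$ with $\int\phi\,dm=\int\tilde\phi\,dm$ and all $n\ge1$,
$$\bigl\Vert\mathcal L_{\sigma^{\ell}\omega}^{n}(\phi-\tilde\phi)\bigr\Vert_{L^1(m)}\le C\bigl(1+S_n^\sigma(\sigma^{\ell}\omega)\bigr)^{-1/\gamma+1}\quad\text{and}\quad\bigl\Vert\mathcal L_{\sigma^{\ell}\omega}^{n}(\phi-\tilde\phi)\bigr\Vert_{L^1(m)}\le C.$$
This is where the LSV analysis enters: each map $T_{\sigma^m\omega}$ with $\beta(\sigma^m\omega)\le\gamma$ acts at least as favourably as $T_\gamma$, whose iterates lose memory at the rate $n^{-1/\gamma+1}$, while the remaining maps (with $\gamma<\beta(\sigma^m\omega)<1$) only fail to improve the estimate, since every $\mathcal L_\omega$ preserves $\mathcal C_*$; hence the decay is governed by the count $S_n^\sigma(\sigma^{\ell}\omega)$ of good maps among the first $n$. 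One then invokes \eqref{eq:large_n}: if $n\ge N_\ve(\sigma^{\ell}\omega)$ then $S_n^\sigma(\sigma^{\ell}\omega)\ge b_0(1-\ve)n$ and the bound is $Cn^{-1/\gamma+1}$; if $n<N_\ve(\sigma^{\ell}\omega)$ the trivial bound $C$ is used, and since $n\le1+N_\ve(\sigma^{\ell}\omega)$ it is dominated by $C(1+N_\ve(\sigma^{\ell}\omega))^{1\vee(1/\gamma-1)}n^{-1/\gamma+1}$. Undoing the $k_0$-shift (which alters $C$ and the threshold only by $O(1)$), taking $n=j-i$, and collecting the constants from the earlier steps yields \eqref{eq:quenched_ml_1}.

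The main obstacle is the first inequality of the core estimate: making rigorous the heuristic that ``a good map is worth one iterate of $T_\gamma$ while a bad map does no harm'' requires re-running the LSV loss-of-memory argument --- via first return/inducing to $[1/2,1]$, or via the explicit cone-contraction estimates underlying \cite{DGTS} and \cite[Corollary 3.3]{KL2} --- in the nonautonomous setting, while tracking the dependence of every constant on $S_n^\sigma(\sigma^{\ell}\omega)$; in particular one must control the initial block of length $\sim N_\ve(\sigma^{\ell}\omega)$ over which the empirical frequency of good maps is not yet $b_0$-close, and it is this block that produces the factor $(1+N_\ve(\sigma^i\omega))^{1\vee(1/\gamma-1)}$ and constitutes the improvement over the unspecified random constant in \cite[Corollary 3.3]{KL2}. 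A secondary, more routine point is carrying out the cone reduction of the previous paragraph uniformly in $\omega$.
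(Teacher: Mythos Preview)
Your outline correctly identifies that everything reduces to an $L^1(m)$ memory-loss estimate for the Lebesgue cocycle and that the outer time $\sigma^i\omega$ is the relevant one, but the proposal has two genuine gaps that prevent it from being a proof.

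First, the cone reduction does not work as stated. You assert that after a bounded number $k_0$ of iterations, $\mathcal L_{\sigma^i\omega}^{k_0}\bigl(\lambda^{-1}(Mh_{\sigma^i\omega}+\psi h_{\sigma^i\omega})\bigr)$ lands in $\mathcal C_*$. But $\psi h_{\sigma^i\omega}=g_2\cdot \mathcal L_{\sigma^s\omega}^{i-s}(g_1 h_{\sigma^s\omega})$ involves multiplication by the Lipschitz function $g_2$, and membership in $\mathcal C_*$ requires that $\phi$ be decreasing and $x^{\beta+1}\phi$ be increasing on $(0,1]$; there is no regularization result in \cite{DGTS} saying a bounded number of iterates repairs these monotonicity conditions after multiplication by an arbitrary Lipschitz function. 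The paper sidesteps this by working not with $\mathcal C_*$ but with a weaker notion of \emph{regular measure} (controlling only the log-Lipschitz norm of push-forwards to $Y=[1/2,1]$ along returns), and by decomposing $g_1,g_2$ into shifted positive pieces $\tilde g_\ell=g_\ell+A_\ell$ whose log-Lipschitz norm is uniformly bounded; regularity of the four resulting measures follows from regularity of $\mu_\omega$ and a short computation, with no finite-step regularization needed.

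Second, and more seriously, you explicitly leave the ``core estimate'' unproved, describing it as the main obstacle and saying only that one must ``re-run the LSV loss-of-memory argument'' tracking $S_n^\sigma$. This is precisely the content of the theorem. The paper carries this out via a decomposition $\nu=\sum_n\alpha_{n,\omega}\nu_{n,\omega}$ of regular measures into pieces satisfying $(T_\omega^n)_*\nu_{n,\omega}=\tilde m$, where the tail $\sum_{j\ge n}\alpha_{j,\omega}$ is controlled by a random variable $S=X_1+\cdots+X_\xi$ built from the return-time tails $u_{\sigma^k\omega}(\ell)=\tilde m(\tau_{\sigma^k\omega}\ge\ell)$; the bound $P(S\ge n)\le C(N_\ve(\omega)+1)^{\kappa\vee\eta}n^{-\eta}$ (Lemma~\ref{lem:tails}) is obtained by an inductive estimate on $R_j=\sup_n n^\eta P(S_j\ge n)$, splitting the convolution at the scales $\tilde N_\ve(\omega)$ and $bn$ and using $S_n^\sigma(\omega)-S_\ell^\sigma(\omega)\ge(b_0-\ve)n-(b_0+\ve)\ell$ for $\ell\ge N_\ve(\omega)$. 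Your heuristic ``a good map is worth one iterate of $T_\gamma$, a bad map does no harm'' does not by itself produce the factor $(1+N_\ve)^{1\vee(1/\gamma-1)}$: that factor already appears in the tail bound $\mu_\omega(\tau_\omega\ge n)$ because estimating it requires summing $\tilde m(\tau_{\sigma^{-i}\omega}\ge n+i)$ over the \emph{backward} orbit, where $S^{\sigma^{-1}}$ and hence $N_\ve$ enter.
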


	The proof of the theorem is given in the appendix. 
	As a straightforward consequence, we obtain the following 
	memory loss estimates in $L^p$.
	
	\begin{corollary}\label{cor:lp_ml}
	Let $p \ge 1$. 
	In the setting of Theorem \ref{thm:quenched_ml}, we have the following estimate for any $\ve \in (0, \tilde{\ve}]$, any $0 \le s  \le i < j$, and for $\bP$-a.e. $\omega \in \Omega$: 
	\begin{align}\label{eq:quenched_ml_lp}
		\begin{split}
			& \bigl\Vert  \bigl[  L_{  \sigma^i \omega }^{j-i} \bigl( g_2  L_{ \sigma^s \omega }^{i-s}   ( g_1 )  \bigr)  \bigr]_{ \sigma^j \omega }  \bigr\Vert_{  L^p(  \mu_{ \sigma^j \omega } ) }
			\\
			&\le C_p    (1 +  \Vert  g_1 \Vert_{ \Lip } )  (1 +  \Vert  g_2 \Vert_{ \Lip } )
			(1 + N_\ve( \sigma^i \omega) )^{ \frac1p(  1 \vee (1/ \gamma  -  1 ) ) }  (j - i)^{ - \frac1p( \frac{1}{\gamma} - 1)},
		\end{split}
	\end{align}
	where $C_p>0$ is a constant depending only on the RDS and $p$.
	\end{corollary}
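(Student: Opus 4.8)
The plan is to upgrade the $L^1$ memory-loss bound of Theorem~\ref{thm:quenched_ml} to an $L^p$ bound by interpolating against a crude $L^\infty$ estimate. Write
\[
F_{\omega} := \bigl[  L_{  \sigma^i \omega }^{j-i} \bigl( g_2\, L_{ \sigma^s \omega }^{i-s}   ( g_1 )  \bigr)  \bigr]_{ \sigma^j \omega }.
\]
The elementary inequality driving everything is $\|F_\omega\|_{L^p(\mu_{\sigma^j\omega})}^p \le \|F_\omega\|_{L^\infty(\mu_{\sigma^j\omega})}^{p-1}\,\|F_\omega\|_{L^1(\mu_{\sigma^j\omega})}$, equivalently $\|F_\omega\|_{L^p} \le \|F_\omega\|_{L^\infty}^{1-1/p}\,\|F_\omega\|_{L^1}^{1/p}$. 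The $L^1$ factor is controlled by~\eqref{eq:quenched_ml_1}, so the only task is to bound $\|F_\omega\|_{L^\infty}$ by $C(1+\Vert g_1\Vert_{\Lip})(1+\Vert g_2\Vert_{\Lip})$ with $C$ absolute.

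For the $L^\infty$ bound I would first recall that each $L_\omega$ is a positive operator with $L_\omega \mathbf{1}=\mathbf{1}$ (both facts follow at once from the defining duality relation for $L_\omega$ together with $(T_\omega)_*\mu_\omega=\mu_{\sigma\omega}$), hence a contraction on $L^\infty$: $\Vert L_\omega g\Vert_{L^\infty(\mu_{\sigma\omega})}\le \Vert g\Vert_{L^\infty(\mu_\omega)}$, and the same for the iterates $L_\omega^n$. Since $M=[0,1]$ is bounded, $\Vert g_i\Vert_{L^\infty}\le \Vert g_i\Vert_{\Lip}\le 1+\Vert g_i\Vert_{\Lip}$. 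Therefore $\Vert L_{\sigma^s\omega}^{i-s}(g_1)\Vert_{L^\infty}\le 1+\Vert g_1\Vert_{\Lip}$, multiplying by $g_2$ gives $\Vert g_2\, L_{\sigma^s\omega}^{i-s}(g_1)\Vert_{L^\infty}\le (1+\Vert g_1\Vert_{\Lip})(1+\Vert g_2\Vert_{\Lip})$, applying $L_{\sigma^i\omega}^{j-i}$ preserves this bound, and centering only costs a factor $2$, since $\Vert [G]_{\sigma^j\omega}\Vert_{L^\infty}\le 2\Vert G\Vert_{L^\infty}$. Hence $\Vert F_\omega\Vert_{L^\infty}\le 2(1+\Vert g_1\Vert_{\Lip})(1+\Vert g_2\Vert_{\Lip})$.

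Combining the two estimates,
\[
\Vert F_\omega\Vert_{L^p}\le \bigl(2(1+\Vert g_1\Vert_{\Lip})(1+\Vert g_2\Vert_{\Lip})\bigr)^{1-1/p}\Bigl(C(1+\Vert g_1\Vert_{\Lip})(1+\Vert g_2\Vert_{\Lip})(1+N_\ve(\sigma^i\omega))^{1\vee(1/\gamma-1)}(j-i)^{-1/\gamma+1}\Bigr)^{1/p},
\]
and collecting exponents — the Lipschitz factors recombine to power $1$, the factor $(1+N_\ve(\sigma^i\omega))$ to power $\tfrac1p(1\vee(1/\gamma-1))$, and $(j-i)$ to power $-\tfrac1p(1/\gamma-1)$ — yields~\eqref{eq:quenched_ml_lp} with $C_p:=2^{1-1/p}C^{1/p}$. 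There is essentially no serious obstacle here; the argument is routine interpolation. The only minor points to keep track of are that the Lipschitz norm dominates the supremum norm on the compact interval $[0,1]$, that $L_\omega$ genuinely fixes constants (so that it is an $L^\infty$-contraction, not merely an $L^1$-contraction), and that the centering operator $[\cdot]_{\sigma^j\omega}$ inflates the $L^\infty$ norm by at most a harmless constant factor.
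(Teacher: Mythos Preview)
Your proof is correct and follows essentially the same approach as the paper: interpolate via $\|F\|_{L^p}\le \|F\|_{L^\infty}^{1-1/p}\|F\|_{L^1}^{1/p}$, control the $L^1$ factor by Theorem~\ref{thm:quenched_ml}, and bound $\|F\|_{L^\infty}$ by a constant multiple of $(1+\|g_1\|_{\Lip})(1+\|g_2\|_{\Lip})$. The only cosmetic difference is that the paper obtains the $L^\infty$ bound by unwinding the relation $L_\omega g = h_{\sigma\omega}^{-1}\mathcal L_\omega(h_\omega g)$ and using $\mathcal L_\omega h_\omega = h_{\sigma\omega}$, whereas you invoke directly that $L_\omega$ is positive with $L_\omega\mathbf 1=\mathbf 1$; these are two phrasings of the same fact.
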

	
	\begin{proof}
	The proof is similar to the proof of \cite[Proposition 3.5]{NPT}. Namely, we use
	\begin{align*}
		\int_0^1 |f(x) |^p \, d \mu_{ \sigma^i \omega }(x)
		\le \Vert f  \Vert_{L^\infty}^{p-1}
		\int_0^1 |f(x)| \, d \mu_{ \sigma^i \omega }(x),
	\end{align*}
	and the observation that, for $\bP$-a.e. $\omega \in \Omega$,
	\begin{align*}
	\bigl|L_{  \sigma^i \omega }^{j-i} \bigl( g_2 L_{ \sigma^s \omega }^{i-s}   (  g_1 ) 
	 \bigr)(x) \bigr| 
	&= \bigl| 
	h_{ \sigma^j \omega }^{-1}(x) \cL_{ \sigma^i \omega }^{j-1} \bigl(  g_2 
	h_{\sigma^i \omega}  h_{\sigma^i \omega}^{-1} \cL_{\sigma^s \omega}^{i-s}
	( h_{\sigma^s \omega} g_1 )  
	\bigr)
	\bigr| 
	\le \Vert g_1 \Vert_{\Lip} \Vert g_2 \Vert_{\Lip}.
	\end{align*}
	It follows by \eqref{eq:quenched_ml_1} that 
	\begin{align*}
	&\bigl\Vert  \bigl[  L_{  \sigma^i \omega }^{j-i} \bigl( g_2  L_{ \sigma^s \omega }^{i-s}   ( g_1 )  \bigr)  \bigr]_{ \sigma^j \omega }  \bigr\Vert_{  L^p(  \mu_{ \sigma^j \omega } ) } \\
	&\le ( 2  \Vert g_1 \Vert_{\Lip} \Vert g_2 \Vert_{\Lip} )^{  \frac{p-1}{p} }
	\bigl\Vert 
	\bigl[  L_{  \sigma^i \omega }^{j-i} \bigl( g_2  L_{ \sigma^s \omega }^{i-s}   ( g_1 )  \bigr)  \bigr]_{ \sigma^j \omega } 
	  \bigr\Vert_{  L^1(  \mu_{ \sigma^j \omega } ) }^{ \frac1p } \\
	&\le C_p   (1 +  \Vert  g_1 \Vert_{ \Lip } )
	 (1 +  \Vert  g_2 \Vert_{ \Lip } )
	(1 + N_\ve( \sigma^i \omega) )^{ \frac1p(  1 \vee (1/ \gamma  -  1 ) ) }  (j - i)^{ - \frac1p( \frac{1}{\gamma} - 1)},
	\end{align*}
	as wanted.
	\end{proof}

	\begin{remark}
	Theorem \ref{thm:quenched_ml} is proved by adapting the proof of \cite[Theorem 2.6]{KL2}. 
	The method used there also applies to the following family of maps (see \cite[Section 3.2]{KL2}), introduced by Pikovsky \cite{P} and studied in \cite{MR, CHMV}: 
	for $\alpha > 1$, define $T_\alpha$ 
    on $[0,1]$
    implicitly by
	\begin{align}\label{eq:pikovsky}
		x = \begin{cases}
			\frac{1}{2 \alpha } ( 1 + T_\alpha(x))^{\alpha},  &0 \le x \le \frac{1}{2 \alpha }, \\
			T_\alpha(x) + \frac{1}{2 \alpha } (1 - T_\alpha(x))^{\alpha}, &\frac{1}{2 \alpha } \le x \le 1,
		\end{cases}
	\end{align}
	and extend to a map $T_\alpha : [-1,1] \to [-1,1]$ by setting $T_\alpha(x) = - T_\alpha(-x)$ for $x \in [-1,0]$. 
	This map has neutral fixed points
	at $x = 1,-1$, while at $x = 0$ its derivative becomes infinite.
	For each $\alpha > 1$,
	$T_\alpha$ preserves the Lebesgue 
	measure $\hat{m}$ on $[-1,1]$ normalized to probability.
	Consider random compositions of Pikovsky maps 
	$T_{ \beta( \sigma^{n-1}\omega ) }\circ \ldots \circ T_{ \beta( \sigma \omega ) }\circ T_{ \beta(\omega) }$ 
	with an
	ergodic driving system $\sigma$
	as in 
	\eqref{eq:random_lsv}, and assume that 
	$\beta : \Omega \to (1, \infty)$ satisfies
	the following conditions with 
	$1 < \gamma_{-} \le \gamma < 2 \le \gamma_{+} < 3$:
	\begin{itemize}
		\item $\gamma_{-} < \textnormal{essinf}_{\omega \in \Omega}  \beta(\omega) \le \esssup_{\omega \in \Omega} \beta(\omega) \le \gamma_+$, and 
		\item $\bP( \beta(\omega) \le \gamma ) > 0$.
	\end{itemize}
	Define $\mu_\omega = \hat{m}$, which trivially
	satisfies
	$(T_\omega)_*\mu_\omega = \mu_{\sigma \omega}$.
	Then,
	the following quenched memory loss estimate
	for Lipschitz functions $g_1,g_2: [0,1] \to \bR$
	can be obtained by modifying the 
	proof of 
	\eqref{eq:quenched_ml_1}, 
	using
	results from \cite[Section 3.2]{KL2}:
	for any
	$\ve \in (0, \tilde{\ve}]$
    with $\tilde\ve$ sufficiently small, $0 \le s  \le i < j$, and 
	for $\bP$-a.e. $\omega \in \Omega$,
	\begin{align*}
			& \bigl\Vert  \bigl[  L_{  \sigma^i \omega }^{j-i} \bigl( g_2  L_{ \sigma^s \omega }^{i-s}   ( g_1 )  \bigr)  \bigr]_{ \sigma^j \omega }  \bigr\Vert_{  L^1(   \hat{m} ) }
			\\
			&\le C   (1 +  \Vert  g_1 \Vert_{ \Lip } )  (1 +  \Vert  g_2 \Vert_{ \Lip } )
			(1 + N_\ve( \sigma^i \omega) )^{ 
			( 1 \vee  \frac{ 1  }{\gamma - 1} )
			}  (j - i)^{ - \frac{ 1  }{\gamma - 1}   }.
	\end{align*}
	Here, 
	$N_\ve( \omega)$ is defined as in
	\eqref{eq:N_eps}. A corresponding estimate in 
    $L^p$ follows by the same argument 
    as in the proof of \eqref{eq:quenched_ml_lp}.
	\end{remark}
    
	\subsection{$\alpha$-mixing noise}

	We apply Theorem \ref{thm:quenched_ml} in the case of $\alpha$-mixing noise. 
	Below we consider the $\alpha$-mixing coefficients for the stationary sequence $(  \beta \circ \sigma^{i}  )_{i \in \mathbb{Z}}$,  defined by
	$$
	\alpha(n) = \sup_{i \in \mathbb{Z}}
    \{ | \bP(A \cap B) - \bP(A) \bP(B) | \: : \: 
    A \in \mathcal{F}_{-\infty, i}, \: 
    B \in \mathcal{F}_{i+n, \infty}
    \}
	$$
	Here, $\cF_{-\infty, i}$ is the sub-sigma-algebra generated by $( \beta \circ \sigma^j )_{j \le i}$ and 
	$\cF_{i,\infty}$ is the sub-sigma-algebra generated by $( \beta \circ \sigma^j )_{j \ge i}$.
	
	We assume \eqref{eq:ass_1} and recall \eqref{eq:ass_2}.%, and that the random observables
	%$(\varphi^{(\ell)}_\omega)_{\omega \in \Omega}$ for $\ell = 1,2$ satisfy \eqref{eq:ass_random_obs}.

	\begin{corollary}\label{cor}Let $\gamma < 1/2$ and $p,s \ge 1$.
		Suppose that
        \begin{align}\label{eq:mixing_rate}
			\alpha(n)  = O( n^{ -q + 1 } \log^{-\iota} (n)  )
		\end{align}
		holds with
		\begin{equation}\label{constants}
		\iota > q \quad \text{and} \quad q > \frac{p}{s} \biggl( \frac{1}{\gamma} - 1 \biggr) + 2.
		\end{equation}
		Then, there exists $B_s \in L^p(\Omega, \cF, \bP)$ such that for any $0 \le r  \le i < j$, and 
		for $\bP$-a.e. $\omega \in \Omega$, for every $g_1, g_2:[0,1]\to\mathbb R$ we have\begin{align}\label{eq:lp_ml_2}
			\begin{split}
				& \bigl\Vert  \bigl[  L_{  \sigma^i \omega }^{j-i} \bigl( g_2 L_{ \sigma^r \omega }^{i-r}   ( g_1)  \bigr)  \bigr]_{ \sigma^j \omega }  \bigr\Vert_{  L^s(  \mu_{ \sigma^j \omega } ) }
				\\
				&\le C    (1 +  \Vert g_1\Vert_{ \Lip } )  (1 +  \Vert  g_2 \Vert_{ \Lip } )
				B_s( \sigma^i \omega )  (j - i)^{ - \frac1s( \frac{1}{\gamma} - 1)},
			\end{split}
		\end{align}
            where $C$ is a constant depending only on $s,p,q$, and the RDS.
	\end{corollary}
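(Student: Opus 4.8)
\emph{Proof proposal.} The plan is to obtain \eqref{eq:lp_ml_2} by combining the quenched $L^p$ memory loss bound of Corollary~\ref{cor:lp_ml} — applied with $s$ in place of the exponent $p$ there — with an integrability statement for the random variable $N_\ve$ from \eqref{eq:N_eps}. Fix any $\ve\in(0,\tilde\ve]$. Since $\gamma<1/2$ we have $1\vee(1/\gamma-1)=1/\gamma-1$, so Corollary~\ref{cor:lp_ml} already yields \eqref{eq:lp_ml_2} with
\[
B_s(\omega):=C_s\,\bigl(1+N_\ve(\omega)\bigr)^{\frac1s\left(\frac1\gamma-1\right)}.
\]
Everything therefore reduces to showing $B_s\in L^p(\Omega,\cF,\bP)$, i.e.\ $1+N_\ve\in L^{\kappa}(\Omega,\cF,\bP)$ where $\kappa:=\tfrac ps\bigl(\tfrac1\gamma-1\bigr)>0$; note that the hypotheses \eqref{constants} say precisely that $q>\kappa+2$ and $\iota>q$. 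Once this is done, the bound \eqref{eq:lp_ml_2} holds with $B_s(\sigma^i\omega)$ on the right, and $B_s\circ\sigma^i\in L^p$ by $\sigma$-invariance of $\bP$.

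To prove $1+N_\ve\in L^{\kappa}$ I would use the tail-sum formula $\bE[(1+N_\ve)^{\kappa}]\lesssim 1+\sum_{n\ge1}n^{\kappa-1}\bP(N_\ve\ge n)$. By the definition \eqref{eq:N_eps}, on $\{N_\ve\ge n\}$ there is some $m\ge n$ and some $\delta\in\{\sigma,\sigma^{-1}\}$ with $|m^{-1}S_m^{\delta}(\omega)-b_0|>\ve b_0$, so a union bound over $\delta$ and over $m\ge n$ gives
\[
\bP(N_\ve\ge n)\le\sum_{\delta\in\{\sigma,\sigma^{-1}\}}\sum_{m\ge n}\bP\bigl(|S_m^{\delta}(\omega)-mb_0|>\ve b_0 m\bigr).
\]
Interchanging the order of summation, it suffices to show $\sum_{m\ge1}m^{\kappa}\,p_m<\infty$, where $p_m:=\bP\bigl(|S_m^{\sigma}(\omega)-mb_0|>\ve b_0 m\bigr)$ (the $\delta=\sigma^{-1}$ contribution being of the same size by stationarity of $\bP$). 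Here $S_m^{\sigma}(\omega)-mb_0=\sum_{i=0}^{m-1}\bigl(\psi\circ\sigma^i(\omega)-b_0\bigr)$ is a partial sum of the bounded, centred, strictly stationary sequence $(\psi\circ\sigma^i-b_0)_{i\ge0}$, $\psi=\mathbf 1_{(0,\gamma]}(\beta)$; since $\psi\circ\sigma^i$ is a function of the coordinate $\beta\circ\sigma^i$, this sequence is $\alpha$-mixing with coefficients bounded by $\alpha(n)$, and $\bE[\psi]=b_0$.

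The core of the argument is a large deviation estimate for such sequences at linear scale: using \eqref{eq:mixing_rate} one shows $p_m=O(m^{-q+1})$ up to logarithmic factors controlled by $\iota>q$. This can be done by a blocking argument — split $\{0,\dots,m-1\}$ into alternating blocks of length $\sim m^{\theta}$ with $\theta<1$ close to $1$, couple the sum over the ``large'' blocks to a sum of independent bounded variables at a total-variation cost $\lesssim m^{1-\theta}\alpha(m^{\theta})$, apply Hoeffding's inequality to the independent approximation (which contributes only a super-polynomially small term $e^{-cm^{1-\theta}}$), and handle the ``small'' blocks and the final block similarly — or by invoking a Fuk--Nagaev type deviation inequality for $\alpha$-mixing sequences from the literature. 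Feeding this into $\sum_m m^{\kappa}p_m$ produces a series dominated by $\sum_m m^{\kappa+1-q}(\log m)^{-\iota}$ plus a convergent remainder, which converges exactly because $q>\kappa+2$, the extra logarithmic margin $\iota>q$ handling the borderline behaviour (and making a cited estimate directly applicable). This gives $1+N_\ve\in L^{\kappa}$ and hence the corollary.

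I expect the main obstacle to be pinning down the deviation rate for $p_m$ sharply enough that the hypotheses \eqref{constants}, and nothing stronger, suffice: a crude Markov bound through a Rosenthal/Fuk--Nagaev moment inequality only yields $p_m=O(m^{-q/2})$, which would force a condition like $q>2\kappa$, so the sharper linear-scale rate $O(m^{-q+1})$ — obtained by the blocking/coupling argument in which the Gaussian-type contribution is negligible — is what makes the stated thresholds work. The remaining steps (passing through Corollary~\ref{cor:lp_ml}, the tail-sum reduction, and the $\sigma$-invariance of $\bP$) are routine.
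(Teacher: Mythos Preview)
Your proposal is correct and follows essentially the same route as the paper: apply Corollary~\ref{cor:lp_ml} with exponent $s$ to reduce to $(1+N_\ve)^{\frac1s(1/\gamma-1)}\in L^p$, then control $\bP(N_\ve\ge n)$ via the tail-sum formula, a union bound over $m\ge n$ and $\delta\in\{\sigma,\sigma^{-1}\}$, and a large-deviation/complete-convergence estimate for the bounded stationary $\alpha$-mixing sequence $(\psi\circ\sigma^i)$. The only difference is that, instead of the blocking/Fuk--Nagaev argument you sketch, the paper cites Shao's complete convergence theorem for $\alpha$-mixing sequences~\cite[Theorem~1]{S} (applied with $r=\infty$), which directly gives $\sum_{\ell\ge1}\ell^{q-2}\bP(|S_\ell^\delta-\ell b_0|\ge \ve b_0\ell)<\infty$ and is precisely where the condition $\iota>q$ enters.
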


	\begin{proof} By \eqref{eq:quenched_ml_lp}, there exists $\ve \in (0, 1/2)$ 
		and $C > 0$
		depending only on the RDS such that \eqref{eq:lp_ml_2} holds with
		$$
		B_s(\omega) := (1 + N_\ve(\omega))^{ \frac1s ( \frac1\gamma - 1 )  } \in L^p(\Omega, \cF, \bP).
		$$
		Therefore, it suffices to show that $B_s \in L^p(\Omega, \cF, \bP)$.

	By the definition of $N_\ve(\omega)$, we have
	$$
	\{ N_\ve \ge k \} \subset \bigcup_{ \delta \in \{ \sigma, \sigma^{-1} \} }   \{  \sup_{\ell \ge k} | \ell^{-1} \tilde{S}_{\ell}^{\delta}  |  > b_0 \ve  \},
	$$
	where
	$$
	\tilde{S}_{\ell}^{\delta} = S_{\ell}^{\delta} - \bE( S_{\ell}^{\delta} ) = S_{\ell}^{\delta} - mb_0.
	$$
	Consequently,
	\begin{align*}
			\int_{\Omega} B_s(\omega)^p \, d \bP(\omega)
			&\le C_{p, s, \gamma} \sum_{k=1}^\infty k^{  \frac{p}{s} (  \frac1\gamma - 1 ) - 1 } \bP( N_\ve \ge k ) \\
			&\le C_{p, s, \gamma} \sum_{ \delta \in \{ \sigma, \sigma^{-1} \} } \sum_{k=1}^\infty  k^{  \frac{p}{s} (  \frac1\gamma - 1 ) - 1 } \bP( \sup_{\ell \ge k} | \ell^{-1} \tilde{S}_{\ell}^{\delta}  |  > b_0 \ve ) \\
			&\le C_{p, s, \gamma} \sum_{ \delta \in \{ \sigma, \sigma^{-1} \} } \sum_{k=1}^\infty  k^{  \frac{p}{s} (  \frac1\gamma - 1 ) - 1 }
			\sum_{\ell \ge k}
			\bP( | \ell^{-1} \tilde{S}_{\ell}^{\delta}  |  \ge b_0 \ve ).
		\end{align*}
		Since $\Vert \beta\circ \sigma^{j} \Vert_\infty \le 1$, 
		the strong law of large numbers for $\alpha$-mixing sequences in
		\cite[Theorem 1]{S} applied with $r = \infty$ yields
		$$
		A_q := \sum_{\ell \ge 1} \ell^{ q - 2 } 
		\bP( | \ell^{-1} \tilde{S}_{\ell}^{\delta}  | \ge b_0 \ve  ) < \infty
		$$
		for $\delta \in \{ \sigma, \sigma^{-1} \}$, 
		assuming \eqref{eq:mixing_rate} with $\iota > q$. Therefore,
		\begin{align*}
			\int_{\Omega} B_s(\omega)^p \, d \bP(\omega) \le 
			2A_q C_{p, s, \gamma} \sum_{k=1}^\infty k^{  \frac{p}{s} (  \frac1\gamma - 1 ) - q + 1 } < \infty,
		\end{align*}
		due to the second requirement in~\eqref{constants}.
	\end{proof}

 We get the following consequence of Corollary \ref{cor}.
    \begin{corollary}
    Suppose that~\eqref{eq:mixing_rate} holds with 
    \begin{equation}\label{854eq}
    \iota >q \quad \text{and} \quad q>\left (\frac{1}{\gamma}-1\right )+2.
    \end{equation}
    Furthermore, 
    let $\varphi \colon \Omega \times [0, 1]\to \mathbb R$ be a measurable map satisfying:
    \begin{itemize}
        \item $\int_0^1\varphi_\omega \, d\mu_\omega=0$ for $\mathbb P$-a.e. $\omega \in \Omega$, where $\varphi_\omega:=\varphi(\omega, \cdot)$;
        \item for  $\mathbb P$-a.e. $\omega \in \Omega$, $\varphi_\omega$ is Lipschitz and~\eqref{obsint} holds with $r>0$.
    \end{itemize}
    Suppose that there exists $p\ge 1$ such that~\eqref{314c} holds with $a=a(p, \gamma)=\frac1p( \frac{1}{\gamma} - 1)$. Then for $\mathbb P$-a.e. $\omega \in \Omega$, \eqref{asipc} holds with sufficiently small $\varepsilon'>0$. %Moreover, if $r=\infty$ then $p>0$ satisfying~\eqref{asipc} exists provided that $\gamma \le \frac 1 5$ and $q>7$.
      %$S_n^\omega\varphi$ satisfies the ASIP when  $\|\varphi_\omega\|_{Lip}\in L^r(\Omega,\mathcal F, \mathbb P)$ and $2/a+8/r+4/p<1$ where  $a=a(p,\gamma)=\frac1p( \frac{1}{\gamma} - 1)$. In particular when $r=\infty$ and \eqref{eq:mixing_rate} holds with $q>7$ the ASIP holds when $\gamma\leq 1/5$.
    \end{corollary}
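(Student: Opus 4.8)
The plan is to deduce the statement from the abstract almost sure invariance principle, Theorem~\ref{T-ASIP}, using Corollary~\ref{cor} to verify Assumption~\ref{P assum} for the random LSV cocycle. Fix $p \ge 1$ witnessing~\eqref{314c} with $a = a(p,\gamma) = \frac1p(\frac1\gamma - 1)$. Since all summands in~\eqref{314c} are positive, $\frac4p < 1$ forces $p > 4$ and $\frac2a < 1$ forces $a > 2$; hence $\frac1\gamma - 1 = pa > 2p > 8$, so in particular $\gamma < 1/9 < 1/2$, which is assumed in Corollary~\ref{cor}.

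I would then apply Corollary~\ref{cor} with both of its parameters (its ``$s$'' and its ``$p$'') taken equal to our $p$. With this choice the prefactor $\frac ps$ appearing in~\eqref{constants} equals $1$, so the requirement there reads $\iota > q$ and $q > (\frac1\gamma - 1) + 2$, which is exactly the hypothesis~\eqref{854eq}. Therefore there exist $B_p \in L^p(\Omega,\mathcal F,\mathbb P)$ and a constant $C > 0$ such that, for $\mathbb P$-a.e. $\omega$, all $0 \le r \le i < j$ and all Lipschitz $g_1, g_2$,
\[
\bigl\Vert \bigl[ L_{\sigma^i\omega}^{j-i}\bigl( g_2\, L_{\sigma^r\omega}^{i-r}(g_1) \bigr) \bigr]_{\sigma^j\omega} \bigr\Vert_{L^p(\mu_{\sigma^j\omega})}
\le C\,(1 + \Vert g_1 \Vert_{\Lip})(1 + \Vert g_2 \Vert_{\Lip})\, B_p(\sigma^i\omega)\,(j-i)^{-a}.
\]
Since each $\mu_{\sigma^j\omega}$ is a probability measure and $2 \le 4 \le p$, the $L^p(\mu_{\sigma^j\omega})$-norm on the left dominates the $L^2(\mu_{\sigma^j\omega})$- and $L^4(\mu_{\sigma^j\omega})$-norms. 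Consequently Assumption~\ref{P assum} holds with $\mathcal I = \{2,4\}$, with $K := B_p \in L^p$, and with $a_{2,n} = a_{4,n} := C n^{-a}$, which is a decreasing sequence of positive numbers tending to $0$. As $a > 2$, we get $\sum_n a_{2,n} < \infty$ and $\sum_n a_{4,n} < \infty$, so $a_{2,n} = O(n^{-a})$, \eqref{314c} (by the choice of $p$) and~\eqref{1351c} are all satisfied; and $\varphi$ meets the hypotheses of Theorem~\ref{T-CLT} by assumption (fibrewise vanishing averages, fibrewise Lipschitz regularity, and $\omega \mapsto \Vert \varphi_\omega \Vert_{\Lip} \in L^r$). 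Theorem~\ref{T-ASIP} then yields $\Sigma^2 \ge 0$ satisfying~\eqref{variance} and, when $\Sigma^2 > 0$, the coupling~\eqref{asipc} for $\mathbb P$-a.e. $\omega$.

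The only step needing genuine care is the parameter bookkeeping in the second paragraph. One should resist applying Corollary~\ref{cor} separately with $s = 2$ and $s = 4$: that would give the faster rates $n^{-\frac12(\frac1\gamma - 1)}$ and $n^{-\frac14(\frac1\gamma - 1)}$, but the constraint on $q$ in~\eqref{constants} would then involve our $p$ and deteriorate as $p$ increases. Choosing $s$ equal to the integrability exponent instead makes the mixing requirement~\eqref{854eq} independent of $p$, and the weaker rate $n^{-a}$ with $a = \frac1p(\frac1\gamma - 1)$ still suffices because $p$ is free to balance $\frac2a = \frac{2p}{1/\gamma - 1}$ against $\frac4p$ in~\eqref{314c}. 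Everything else is routine: the random LSV cocycle fits the abstract framework of Section~2 through the equivariant densities $d\mu_\omega = h_\omega\, dm$ supplied by~\cite{DGTS}, the transfer operators $L_\omega$ of~\eqref{Lom}, and the requisite measurability of $\omega \mapsto \mu_\omega$, while the comparison of $L^q(\mu_{\sigma^j\omega})$-norms is elementary.
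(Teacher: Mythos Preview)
Your proof is correct and follows the same approach as the paper: apply Corollary~\ref{cor} with $s=p$ (so that the mixing constraint~\eqref{constants} reduces to~\eqref{854eq}), obtain Assumption~\ref{P assum} with rate $n^{-a}$, and invoke Theorem~\ref{T-ASIP}. You are simply more explicit than the paper about two points it leaves implicit---that \eqref{314c} forces $p>4$ and $\gamma<1/2$, and that the $L^p$-estimate from Corollary~\ref{cor} passes to $\mathcal I=\{2,4\}$ by monotonicity of $L^s$-norms on a probability space.
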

    \begin{proof}
       % Denote $b=\frac1\gamma-1>0$.  
       Applying Corollary~\ref{cor} for $s=p$ (which we can due to~\eqref{854eq}), we  see that the assumption~\ref{P assum}  holds with $a_n=O(n^{-a})$, where $a$ is as in the statement. Thus, the  conclusion of the theorem follows readily from Theorem \ref{T-ASIP}. %the ASIP holds when starting with an observable $\varphi_{\omega}$ such that $\|\varphi_\omega\|_{Lip}\in L^r(\Omega,\mathcal F, \mathbb P)$ and $2/a+4/r+4/p<1$. 

       %Assume\footnote{YH I guess we need to change what appears below as well, right?} now that $r=\infty$.
       % By taking $r=\infty$ and $q>7$  we get that  ASIP when $1/a+2/p<\frac12$, assuming that $\gamma\leq 1/5$. 
       %Denote $b=\frac1{\gamma}-1$. Then $1/a+2/p<\frac12$ is equivalent to 
       % $$
       % H(p)<0, \,\text{ where }H(x)=x^2-\frac12xb+2b.
       % $$
       % Now, if we assume that $\gamma<1/33$ we see that the quadratic function $H$ has two positive roots (as $b^2/4>8b$) and that $H(p)$ is negative for all values of $p$ between these two roots. In order to ensure that the larger root $p$ is at least $1$ we need $2-b/2\leq 0$, namely that $\gamma\leq 1/5$.
        
    \end{proof}

    \begin{corollary}
Suppose that $\gamma <1/5$, $r>0$ and that~\eqref{eq:mixing_rate} holds with 
\[
 \iota >q \quad \text{and} \quad q> \frac{2}{\frac{1-5\gamma}{1-\gamma}-\frac 8 r}\cdot \frac{1-\gamma}{\gamma}+2
\]
Moreover, let $\varphi \colon \Omega \times [0, 1]\to \mathbb R$ be as in the statement of the previous corollary with $r=\infty$. Then, ASIP holds.
\end{corollary}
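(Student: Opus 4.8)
The plan is to read off the ASIP from Theorem~\ref{T-ASIP}, using Corollary~\ref{cor} to supply Assumption~\ref{P assum} with $\mathcal I=\{2,4\}$. Set $a:=\tfrac12\bigl(\tfrac1\gamma-1\bigr)=\tfrac{1-\gamma}{2\gamma}$, the $L^2$ memory-loss exponent coming from the random LSV estimates, and let $r\in(0,\infty]$ denote the integrability exponent of $\omega\mapsto\Vert\varphi_\omega\Vert_{\Lip}$ in~\eqref{obsint}, with the convention $8/r=0$ when $r=\infty$. The first point is purely arithmetical: $\gamma<1/5$ is equivalent to $\tfrac1\gamma-1>4$, hence at once to $a>2$ and to $\tfrac14\bigl(\tfrac1\gamma-1\bigr)>1$. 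The latter inequality gives $\sum_n n^{-\frac14(1/\gamma-1)}<\infty$, which will be~\eqref{1351c}; the former is the $a>2$ recorded in the parenthetical of~\eqref{314c}.

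Next I would fix an exponent $p\ge 1$ lying in the interval
\[
\frac{4}{\frac{1-5\gamma}{1-\gamma}-\frac8r}\;<\;p\;<\;\frac{2\gamma(q-2)}{1-\gamma}.
\]
This interval has a positive left endpoint because $\frac{1-5\gamma}{1-\gamma}-\frac8r>0$ (automatic when $r=\infty$, since $\gamma<1/5$; equivalent to $r>\frac{8(1-\gamma)}{1-5\gamma}$ otherwise), and it is nonempty precisely because $q>\frac{2}{\frac{1-5\gamma}{1-\gamma}-8/r}\cdot\frac{1-\gamma}{\gamma}+2$ — which is exactly the hypothesis on $q$ after clearing denominators. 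For such $p$, the right inequality above reads $q>\tfrac p2\bigl(\tfrac1\gamma-1\bigr)+2$, which together with $\iota>q$ is condition~\eqref{constants} of Corollary~\ref{cor} for $s=2$; a fortiori $q>\tfrac p4\bigl(\tfrac1\gamma-1\bigr)+2$, so Corollary~\ref{cor} also applies with $s=4$ and the same $p,q$. Applying Corollary~\ref{cor} for $s\in\{2,4\}$ gives estimate~\eqref{eq:lp_ml_2} with a random factor $B_s\in L^p(\Omega,\cF,\bP)$ and decay $(j-i)^{-\frac1s(1/\gamma-1)}$. Since $1+N_\ve\ge1$ and $\tfrac14(1/\gamma-1)<\tfrac12(1/\gamma-1)$, we have $B_4\le B_2$ pointwise; hence taking $K:=B_2\in L^p(\Omega,\cF,\bP)$ and $a_{s,n}:=C\,n^{-\frac1s(1/\gamma-1)}$ (absorbing the constant from Corollary~\ref{cor}), estimate~\eqref{DEC} holds for both $s=2$ and $s=4$. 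Thus Assumption~\ref{P assum} is satisfied with $\mathcal I=\{2,4\}$, and in particular $a_{2,n}=O(n^{-a})$.

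It remains to verify the remaining quantitative conditions of Theorem~\ref{T-ASIP}. Condition~\eqref{1351c} was noted above. For~\eqref{314c}, since $\tfrac2a=\tfrac{4\gamma}{1-\gamma}$ we have
\[
\frac2a+\frac4p+\frac8r=\frac{4\gamma}{1-\gamma}+\frac4p+\frac8r<1\quad\Longleftrightarrow\quad\frac4p<\frac{1-5\gamma}{1-\gamma}-\frac8r,
\]
and the right-hand inequality is precisely the left-hand bound in the interval defining $p$; the accompanying $a>2$ has already been checked. Since $\varphi$ is of the form required in Theorem~\ref{T-CLT} (and hence in Theorem~\ref{T-ASIP}), all hypotheses of Theorem~\ref{T-ASIP} are in force, so the variance $\Sigma^2\ge0$ of~\eqref{variance} exists and, when $\Sigma^2>0$, the quenched ASIP~\eqref{asipc} holds.

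The argument is essentially bookkeeping built on Theorem~\ref{T-ASIP} and Corollary~\ref{cor}; the one \emph{load-bearing} step is the parameter count in the second paragraph. There one must see that the two competing requirements on $p$ — $p$ large enough for the summability budget in~\eqref{314c}, versus $p$ small enough that the $\alpha$-mixing rate still forces $B_2\in L^p$ via Corollary~\ref{cor} — are simultaneously met exactly when $q$ exceeds the stated threshold. No analytic difficulty is expected beyond this optimization.
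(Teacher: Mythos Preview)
Your proposal is correct and follows essentially the same approach as the paper: choose $p$ in the interval $\bigl(\tfrac{4}{(1-5\gamma)/(1-\gamma)-8/r},\,\tfrac{2\gamma(q-2)}{1-\gamma}\bigr)$, apply Corollary~\ref{cor} for $s\in\{2,4\}$ to obtain Assumption~\ref{P assum} with $K\in L^p$, and then invoke Theorem~\ref{T-ASIP}. Your write-up is in fact more careful than the paper's terse proof---you explicitly verify that the interval for $p$ is nonempty precisely under the stated hypothesis on $q$, handle both $s=2$ and $s=4$ (noting $B_4\le B_2$ so a single $K$ suffices), and check~\eqref{1351c} separately.
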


\begin{proof}
Choose $p>0$ satisfying
\[
p>\frac{4}{\frac{1-5\gamma}{1-\gamma}-\frac8r} \quad \text{and} \quad q>\frac p 2\left (\frac{1}{\gamma}-1\right )+2.
\]
It follows from Corollary~\ref{cor} that the assumption~\ref{P assum} holds with $s=4$ and $a=\frac 1 2\left (\frac{1}{\gamma}-1\right )$. Note that $\frac 2 a+\frac 4 p+8/r<1$, yielding the desired claim.
\end{proof}

\vskip0.2cm

\section*{Appendix A: Proof of Theorem \ref{thm:quenched_ml}}

Throughout this section, $C$ denotes
a generic constant depending only on
the random dynamical system under
consideration.
 In particular, $C$ does not depend on $\omega \in \Omega$.

We closely follow the strategy used in the proof of \cite[Theorem 2.6]{KL2}, thereby showing that proving \eqref{eq:quenched_ml_1} can be reduced to estimating the tail probabilities
$$
\mu_\omega(  \tau_\omega \ge n ) \quad \text{and} \quad 
\tilde{m}( \tau_{   \omega  } \ge n  )
$$
where $\tilde{m}$ denotes the Lebesgue measure on $Y:=[1/2, 1]$ normalized 
to probability and 
$$
\tau_\omega(x) = \inf \{ n \ge 1 \: : \:  T_\omega^n(x) \in Y  \}.
$$

We denote by $\cP_\omega$ the canonical partition (mod $m$) of $[0,1]$ into open subintervals
such that $\tau_\omega$ is constant on each $a \in \cP_\omega$. That is, $\cP_\omega$ consists 
of intervals 
$$
(x_{n+1}(\omega),  x_n(\omega) ) \quad \text{and} \quad 
(y_{n+1}(\omega),  y_n(\omega) ), \quad n \ge 1,
$$
where 
\begin{align*}
	x_n(\omega) =   T_\omega^{-n}(1) , \quad y_n(\omega) = 
	\frac{ x_{n-1}(\sigma \omega) + 1 }{2}, \quad n \ge 1.
\end{align*}
Here, the preimages are taken with respect to the left branch of $T_\omega$ and
$x_0(\omega) = 1$. We denote by $\tau_\omega(a)$ the constant value of 
$\tau_\omega$ on $a \in \cP_\omega$.

%The following definition is a counterpart of \cite[Definition 2.4]{KL2}:

We start by defining the notion of a regular measure, which serves as a random counterpart of a similar notion in the deterministic setting considered in \cite{KL2}.

For a nonnegative function 
$\psi : Y \to \bR_+$, 
we denote
by $|\psi|_{\LL}$ the Lipschitz seminorm of the logarithm of $\psi$:
\[
|\psi|_{\LL}
= \sup_{y \neq y' \in Y} \frac{|\log \psi(y) - \log \psi(y')|}{d(y,y')}
,
\]
with the conventions $\log 0 = -\infty$ and $\log 0 - \log 0 = 0$. 
Given a measure $\mu$ supported on $Y$
with density $\rho = d \mu / d \tilde m$,
we will often write $|\mu|_{\LL}$ 
for $|\rho|_{\LL}$. Note that 
$$
|\psi|_{\LL} \le ( \inf_Y \psi )^{-1} |\psi|_{\Lip},
$$
where 
$$
|\psi|_{\Lip} = \sup_{y \neq y' \in Y} 
\frac{|\psi(y) - \psi(y')|}{|y - y'|}.
$$

Write
$$
F_{\omega, a}(x) = T_\omega^{\tau(\omega)}(x), \quad a \in \cP_\omega.
$$
Then (see \cite[Section 3.4]{KL}) there exist $\Lambda > 1$ 
and $K > 0$ such that
$$
F_{\omega, a}'(x) \ge \Lambda \quad 
\text{and} \quad 
\biggl| \frac{d(F_{\omega, a})_*(\tilde{m}|_a)}{d \tilde m} \biggr|_{\LL} \le K.
$$
hold for all $a \in \cP_\omega$, all 
$x \in a$, and for $\bP$-a.e. $\omega \in \Omega$.

\begin{proposition}\label{prop:k1k2}
	There exist constants $0 < K_1 < K_2$, depending only on
	the RDS, such that for $\bP$-a.e. $\omega \in \Omega$ and
	for each nonnegative measure $\mu$ on $Y$
	with $|\mu|_{\LL} \leq K_2$,
	\[
	\bigl| (  F_{\omega, a} )_* (\mu|_a) \bigr|_{\LL} \leq K_1
	,
	\]
	whenever $a \in \cP_\omega$, $a \subset Y$.
	The constants $K_1$, $K_2$ can be chosen arbitrarily large. 
\end{proposition}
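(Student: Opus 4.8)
The plan is to compute the density of $(F_{\omega,a})_*(\mu|_a)$ explicitly and to bound its $\LL$-seminorm by a \emph{contracting} contribution from $\mu$ plus the fixed bounded-distortion constant $K$; one then takes $K_2$ large but small enough relative to $\Lambda$ that the contraction dominates.

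First I would dispose of a degenerate case. If $|\mu|_\LL<\infty$, then the density $\rho=d\mu/d\tilde m$ is either identically zero on $Y$ — in which case $(F_{\omega,a})_*(\mu|_a)=0$ and $|(F_{\omega,a})_*(\mu|_a)|_\LL=0$, so there is nothing to prove — or strictly positive on $Y$, since a pair $y,y'$ with $\rho(y)=0<\rho(y')$ would force $|\mu|_\LL=\infty$ through the convention $\log 0=-\infty$. Assume henceforth $\rho>0$, and let $g=(F_{\omega,a})^{-1}\colon Y\to a$ be the inverse branch — recall that for $a\subset Y$ the induced branch $F_{\omega,a}$ maps $a$ diffeomorphically onto $Y$. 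Since $\tilde m$ is a constant multiple of Lebesgue measure on $Y\supset a$, a change of variables yields
\[
\frac{d(F_{\omega,a})_*(\mu|_a)}{d\tilde m}(y)=\rho(g(y))\,|g'(y)|,
\]
and the choice $\rho\equiv 1$ identifies $\frac{d(F_{\omega,a})_*(\tilde m|_a)}{d\tilde m}$ with $|g'|$.

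Next comes the estimate. Writing $\tilde\rho(y)=\rho(g(y))|g'(y)|$, so that $\log\tilde\rho=\log(\rho\circ g)+\log|g'|$, the triangle inequality for $|\cdot|_\LL$ splits $|\tilde\rho|_\LL$ into two terms. The first is $\sup_{y\neq y'}\frac{|\log\rho(g(y))-\log\rho(g(y'))|}{d(y,y')}\le|\rho|_\LL\cdot\sup_{y\neq y'}\frac{d(g(y),g(y'))}{d(y,y')}\le\Lambda^{-1}|\rho|_\LL\le\Lambda^{-1}K_2$, where I used $|g'|=1/(F_{\omega,a}'\circ g)\le\Lambda^{-1}$ from the uniform expansion hypothesis. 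The second term is exactly $\bigl|\,|g'|\,\bigr|_\LL=\bigl|\tfrac{d(F_{\omega,a})_*(\tilde m|_a)}{d\tilde m}\bigr|_\LL\le K$ by the distortion hypothesis. Hence, uniformly in $a\subset Y$ and for $\bP$-a.e.\ $\omega$ (as $\Lambda,K$ are independent of $\omega,a$),
\[
\bigl|(F_{\omega,a})_*(\mu|_a)\bigr|_\LL\le\Lambda^{-1}K_2+K.
\]
It remains to select $K_1,K_2$ with $\Lambda^{-1}K_2+K\le K_1$ and $0<K_1<K_2$: fixing any $K_1>\frac{\Lambda K}{\Lambda-1}$ and setting $K_2:=\Lambda(K_1-K)$ works, since $K_2>K_1\iff(\Lambda-1)K_1>\Lambda K$ and $\Lambda^{-1}K_2+K=K_1$. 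Since $K_1$ may be chosen arbitrarily large, and then so may $K_2$, the final assertion of the proposition follows.

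I do not anticipate a genuine obstacle: this is a one-line distortion computation dressed up with conventions. The only points requiring care are the normalization matching that makes the Jacobian density equal to $|g'|$ exactly, the $\rho\equiv 0$ case handled via the $\log$-conventions, and the fact that $F_{\omega,a}(a)=Y$ for $a\subset Y$, which is part of the inducing construction recalled before the proposition.
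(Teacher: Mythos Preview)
Your proof is correct and follows essentially the same route as the paper: compute the pushforward density as $(\rho\circ g)\cdot|g'|$, split the $\LL$-seminorm via the triangle inequality into a contracting piece $\Lambda^{-1}|\mu|_\LL$ and the distortion piece $K$, and then choose $K_1,K_2$ so that $K+\Lambda^{-1}K_2\le K_1<K_2$. Your treatment is in fact slightly more careful than the paper's, handling the degenerate $\rho\equiv 0$ case and spelling out the admissible range of constants explicitly.
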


\begin{proof}
	The proof is similar to \cite[Proposition 3.1]{KKM}; we provide the details for completeness.
	It suffices to show that for
    $\bP$-a.e. $\omega \in \Omega$, 
	\begin{equation}\label{930c}
	|(F_{\omega, a})_*(\mu |_a)|_{\LL} 
	\le K + \Lambda^{-1} |\mu|_{\LL}.
	\end{equation}
	Then we can choose $K_1 = K + \Lambda^{-1} K_2$ and $K_2 > (1 - \Lambda^{-1}) K$. 
	
	Note that 
	$$
	v_{\omega, a} :=
	\frac{d (F_{\omega, a})_*(\mu |_a) }{ d \tilde m} = (\rho \circ F_{\omega, a}^{-1}) 
	\cdot (F_{\omega, a}^{-1})' 
	= \frac{\rho \circ F_{\omega, a}^{-1}} {F_{\omega,a}' \circ  F_{\omega, a}^{-1} },
	$$
	where $\rho$ is the density 
	of $\mu$ with respect to $\tilde m$.
	Therefore,
	\begin{align*}
		&|\log v_{\omega, a}(x) - \log v_{\omega, a}(x') | \\
		&\le | \log  \rho(F_{\omega, a}^{-1}(x)) 
		-  \log  \rho(F_{\omega, a}^{-1}(x'))  |
		+ | \log F_{\omega,a}'( F_{\omega, a}^{-1}(x)) - \log F_{\omega,a}'( F_{\omega, a}^{-1}(x') )| \\
		&\le (|\mu|_{\LL} \Lambda^{-1} 
		+ K)|x-x'|,
	\end{align*}
    proving~\eqref{930c}.
\end{proof}

\begin{definition} \label{def:reg}
    Fix
$K_1, K_2$ as in Proposition \ref{prop:k1k2}.
	Let $\nu$ be a nonnegative measure on $M = [0,1]$. 
	For $\omega \in \Omega$,
	we say that $\nu$ is regular with respect to $\omega$ 
	if for every $\ell \geq 1$,
	\begin{align}\label{eq:regular}
		\bigl| (T_{ \omega}^\ell )_* (\nu |_{  \{ \tau_{ \omega } = \ell \} }) \bigr|_{\LL}
		\leq K_1
		.
	\end{align}
	Given a function $r : \{0,1,\ldots\} \to [0,\infty)$,
	we say that $\nu$ has tail bound $r$
	with respect to $\omega$,
	if for all $n \geq 0$,
	\begin{align}\label{eq:tail_bound}
		\nu \bigl( \{ x \in [0,1] : \tau_\omega(x) \geq n \}\bigr)
		\leq r( n )
		.
	\end{align}
	We say that $\nu$ is regular with tail 
	bound $r$
	w.r.t. $\omega$
	 if both~\eqref{eq:regular}
	and~\eqref{eq:tail_bound} are satisfied. 
\end{definition}

By \cite[Proposition 3.14]{KL}, any probability measure whose density belongs to
$\cC_*$ is regular
w.r.t. $\omega$
for $\bP$-a.e.
$\omega \in \Omega$,
provided we choose 
$K_1, K_2 \ge 1$ to be sufficiently large 
(depending only on the RDS and parameters of the cone $\cC_*$). From now on, we assume that such $K_1, K_2$ have been fixed. 
Then $\mu_\omega$ is regular w.r.t. $\omega$
for $\bP$-a.e.
$\omega \in \Omega$.

The proof of the following result is essentially the same as that of \cite[Proposition 2.5]{KL2}, and is therefore omitted.

\begin{proposition}\label{prop:regular} Let $k \ge 1$.
	\begin{itemize}
		\item[(a)] For $\bP$-a.e. $\omega \in \Omega$, 
		the measure $\tilde{m}$ is regular w.r.t. 
		$\omega$
		and every measure $\mu$ on $Y$ with $|\mu|_{\LL} \leq K_2$ is regular with the tail bound $C u_\omega$ w.r.t. $\omega$. \smallskip
		\item[(b)] If $\{\mu_j\}$ is a finite or countable collection of measures regular w.r.t.\ $\omega$,
		then $\mu = \sum_{j} \mu_j$ is regular w.r.t.\ $\omega$.  \smallskip
		\item[(c)] 	If $\mu$ is a regular measure w.r.t.\ $\omega$,  then both
		$(T_{\omega}^k)_*\mu$ and $((T_{\omega}^k)_*\mu)|_{M \setminus Y}$ are regular w.r.t.\ $\sigma^k\omega$
		for $\bP$-a.e. $\omega \in \Omega$.
		Moreover, if $n \ge 1$, then for $\bP$-a.e. $\omega \in \Omega$,
		\[
		\Bigl| \bigl( (T^{n}_\omega)_* \mu \bigr) \big|_{Y} \Bigr|_{\LL}
		\leq K_1.
		\]
	\end{itemize}
\end{proposition}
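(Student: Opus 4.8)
The plan is to mirror the proof of \cite[Proposition~2.5]{KL2}, now reading the fixed LSV map there as the cocycle $(T_\omega)$ and checking that every bound can be taken uniform in $\omega$ on a set of full $\bP$-measure. Two elementary properties of the $\LL$-seminorm drive everything. First, the \emph{mediant inequality}: if $(\mu_j)$ is a finite or countable family of nonnegative measures on $Y$, then the density of $\sum_j\mu_j$ has $\LL$-seminorm at most $\sup_j|\mu_j|_{\LL}$ --- apply $\min_j\frac{a_j}{b_j}\le\frac{\sum_ja_j}{\sum_jb_j}\le\max_j\frac{a_j}{b_j}$ to ratios of densities, with a routine limiting argument for countably many terms. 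Second, on an interval of length $1/2$ a density with $\LL$-seminorm $\le K_2$ lies between $e^{-K_2/2}$ and $e^{K_2/2}$ times its average, so a measure $\mu$ on $Y$ with $|\mu|_{\LL}\le K_2$ satisfies $\mu\le e^{K_2/2}\mu(Y)\,\tilde m$ as measures on $Y$. The dynamical inputs are: the uniform expansion/distortion estimates for the induced maps $F_{\omega,a}$ recalled before Proposition~\ref{prop:k1k2}; Proposition~\ref{prop:k1k2} itself; the structural facts that $\cP_\omega$ refines $\{[0,\tfrac12),[\tfrac12,1]\}=\{M\setminus Y,\,Y\}$ and that on each $a\in\cP_\omega$ the map $T_\omega^{\tau_\omega(a)}=F_{\omega,a}$ is a full branch onto $Y$; and the bookkeeping identity that $\tau_\omega(x)=m$ forces $T_\omega(x)\in Y$ when $m=1$, and $T_\omega(x)\notin Y$ with $\tau_{\sigma\omega}(T_\omega x)=m-1$ when $m\ge2$.

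Parts (b) and (a) are then short. For (b), the set $\{\tau_\omega=\ell\}$ is the same for every $\mu_j$ and push-forward commutes with countable sums, so $\bigl|(T_\omega^\ell)_*\bigl((\sum_j\mu_j)|_{\{\tau_\omega=\ell\}}\bigr)\bigr|_{\LL}=\bigl|\sum_j(T_\omega^\ell)_*(\mu_j|_{\{\tau_\omega=\ell\}})\bigr|_{\LL}\le\sup_j\bigl|(T_\omega^\ell)_*(\mu_j|_{\{\tau_\omega=\ell\}})\bigr|_{\LL}\le K_1$ by the mediant inequality. For (a), decompose $\tilde m|_{\{\tau_\omega=\ell\}}=\sum_{a\subset Y,\,\tau_\omega(a)=\ell}\tilde m|_a$; each $(F_{\omega,a})_*(\tilde m|_a)$ has $\LL$-seminorm $\le K\le K_1$ by the distortion estimate, so the mediant inequality gives that $\tilde m$ is regular. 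For a general $\mu$ on $Y$ with $|\mu|_{\LL}\le K_2$, use the same decomposition together with Proposition~\ref{prop:k1k2} (whose hypothesis is the bound $|\mu|_{\LL}\le K_2$ for the whole measure on $Y$, not a bound on individual restrictions) to get $\bigl|(F_{\omega,a})_*(\mu|_a)\bigr|_{\LL}\le K_1$, and then the mediant inequality again; the tail bound is immediate from $\mu\le e^{K_2/2}\mu(Y)\,\tilde m$ on $Y$, which gives $\mu(\{\tau_\omega\ge n\})\le e^{K_2/2}\mu(Y)\,u_\omega(n)$ with $u_\omega(n):=\tilde m(\{\tau_\omega\ge n\})$.

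The crux is (c), which I would reduce to the following one-step lemma: if $\nu$ is regular w.r.t.\ $\omega$ then $(T_\omega)_*\nu$ and $((T_\omega)_*\nu)|_{M\setminus Y}$ are regular w.r.t.\ $\sigma\omega$ and $\bigl|((T_\omega)_*\nu)|_Y\bigr|_{\LL}\le K_1$. Iterating this $k$ times (advancing the base point by $\sigma$ at each step), and applying it once more to $(T_\omega^{k-1})_*\mu$ (resp. $(T_\omega^{n-1})_*\mu$) for the claims on $((T_\omega^k)_*\mu)|_{M\setminus Y}$ (resp. $((T_\omega^n)_*\mu)|_Y$), then yields all assertions of (c). For the one-step lemma, write $\nu=\sum_{m\ge1}\nu_m$ with $\nu_m=\nu|_{\{\tau_\omega=m\}}$. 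If $m=1$, then $(T_\omega)_*\nu_1$ is supported on $Y$ and, by regularity of $\nu$ at level $1$, has $\LL$-seminorm $|(T_\omega^1)_*\nu_1|_{\LL}\le K_1\le K_2$, hence is regular w.r.t.\ $\sigma\omega$ by part (a). If $m\ge2$, then $(T_\omega)_*\nu_m$ is supported on $\{\tau_{\sigma\omega}=m-1\}\subset M\setminus Y$, and on that single level the push-forward by $T_{\sigma\omega}^{m-1}$ equals $(T_\omega^m)_*\nu_m$, whose $\LL$-seminorm is $\le K_1$ by regularity of $\nu$ at level $m$; thus $(T_\omega)_*\nu_m$ is regular w.r.t.\ $\sigma\omega$ (all other levels being null). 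Recombining via part (b) gives that $(T_\omega)_*\nu=\sum_m(T_\omega)_*\nu_m$ and $((T_\omega)_*\nu)|_{M\setminus Y}=\sum_{m\ge2}(T_\omega)_*\nu_m$ are regular w.r.t.\ $\sigma\omega$, while $((T_\omega)_*\nu)|_Y=(T_\omega)_*\nu_1$ has $\LL$-seminorm $\le K_1$.

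The main obstacle is organizational rather than conceptual: one must carry out this decompose-and-reassemble scheme without ever degrading the constants $\Lambda,K,K_1,K_2$, which requires (i) the support-tracking identities for $\tau_\omega$ under one step, (ii) the fact that $\cP_\omega$ respects the splitting $M=(M\setminus Y)\sqcup Y$, so that ``restrict to $M\setminus Y$'' commutes with the level decomposition, and (iii) invoking each of the earlier pointwise-in-$\omega$ estimates on its own full-measure set and taking a countable intersection. No new difficulty is introduced by randomness, which is exactly why the argument ``is essentially the same as'' the deterministic one in \cite[Proposition~2.5]{KL2}.
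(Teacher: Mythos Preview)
Your argument is correct and follows exactly the route of \cite[Proposition~2.5]{KL2}, which is precisely what the paper invokes in omitting the proof. One small imprecision: for $m=2$ the set $\{\tau_{\sigma\omega}=m-1\}=\{\tau_{\sigma\omega}=1\}$ is not contained in $M\setminus Y$, but what you actually use---that $(T_\omega)_*\nu_m$ is supported in $M\setminus Y$ (because $\tau_\omega(x)\ge2\Rightarrow T_\omega x\notin Y$) and simultaneously in $\{\tau_{\sigma\omega}=m-1\}$---is correct, so the logic is unaffected.
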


\subsection*{Step 1: tail bounds of $\mu_\omega$ and $\tilde{m}$}

Set
\begin{align}\label{eq:h_omega}
u_\omega (n) = \tilde{m}( \tau_{ \omega } \ge n ).
\end{align}

\begin{proposition} For any $\ve \in (0, 1/2)$ and 
	$\bP$-a.e. $\omega \in \Omega$,
	\begin{align}\label{eq:TB}
		\mu_\omega( \tau_\omega \ge n ) \le  C ( N_\ve(\omega) + 1 )^{ 1 \vee ( 1 / \gamma - 1 ) }      n^{ 1 - 1 / \gamma }.
	\end{align}
	Moreover,
	\begin{align}\label{eq:TB_m}
		u_\omega(n) \le  C S^\sigma_{n}(  \omega )^{ - 1 / \gamma }.
	\end{align}
\end{proposition}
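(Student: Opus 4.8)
The plan is to express both tails through the left‑branch preimages $x_n(\omega):=T_\omega^{-n}(1)$ and then to prove a single polynomial decay estimate for these points. First I would record the geometry of the return partition $\cP_\omega$: unwinding the formulas for $x_n(\omega)$ and $y_n(\omega)$ shows that for $n\ge 2$ one has $\{\tau_\omega\ge n\}=(0,x_n(\omega))\cup(1/2,y_n(\omega))$ with $y_n(\omega)-\tfrac12=\tfrac12 x_{n-1}(\sigma\omega)$. Since $\tilde m$ is normalized Lebesgue measure on $Y$, this already yields the exact identity $u_\omega(n)=x_{n-1}(\sigma\omega)$, so \eqref{eq:TB_m} will follow from a bound on $x_m$. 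For the $\mu_\omega$‑tail I would first note that $h_\omega$ is decreasing and, since $X^{\beta+1}h_\omega$ is increasing and $\int_0^1 h_\omega\,dm=1$, bounded on $Y$ by a constant depending only on $\beta=\esssup\beta(\cdot)$; hence $\mu_\omega((1/2,y_n(\omega)))\le C\,x_{n-1}(\sigma\omega)$. Then, pulling the interval $(0,x_n(\omega))$ back repeatedly along the left branches of $T_{\sigma^{-1}\omega},T_{\sigma^{-2}\omega},\dots$, peeling off the right‑branch preimages, and using $(T_{\sigma^{-1}\omega})_*\mu_{\sigma^{-1}\omega}=\mu_\omega$, I get the telescoping identity $\mu_\omega((0,x_n(\omega)))=\sum_{j\ge 1}\mu_{\sigma^{-j}\omega}((1/2,y_{n+j}(\sigma^{-j}\omega)))$, the remainder vanishing by the cone estimate $\int_0^x h_\omega\,dm\le a x^{1-\beta}$. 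Bounding the near‑$1/2$ pieces by the $h_\omega|_Y$ bound once more gives $\mu_\omega(\tau_\omega\ge n)\le C\sum_{j\ge 0}u_{\sigma^{-j}\omega}(n+j)$.

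The core estimate is then $x_m(\omega)\le C\,S_m^\sigma(\omega)^{-1/\gamma}$ for $\bP$‑a.e. $\omega$ and all $m$. To prove it I would write the increasing backward orbit $w_0=x_m(\omega)<w_1<\dots<w_{m-1}=1/2$ with $w_{k+1}=T_{\sigma^k\omega}(w_k)=w_k+2^{\beta(\sigma^k\omega)}w_k^{1+\beta(\sigma^k\omega)}$ along the left branch. At a ``good'' time $k$ (i.e. $\beta(\sigma^k\omega)\le\gamma$; there are at least $S_m^\sigma(\omega)-1$ such times with $k\le m-2$) one has $w_{k+1}-w_k\ge w_k^{1+\gamma}$ since $w_k<1$, while at every other time $w$ still increases. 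Restricting to the subsequence $b_1<\dots<b_g$ of $w$‑values at the good times ($g\ge S_m^\sigma(\omega)-1$, all $b_j<1/2$) gives $b_{j+1}\ge b_j+b_j^{1+\gamma}$, whence $b_{j+1}^{-\gamma}\le b_j^{-\gamma}-c_\gamma$ via the elementary inequality $(1+t)^{-\gamma}\le 1-c_\gamma t$ on $[0,2^{-\gamma}]$; summing over $j$ yields $b_1^{-\gamma}\ge c_\gamma(g-1)$, so $x_m(\omega)=w_0\le b_1\le C\,g^{-1/\gamma}$ (the case $g\le 1$ being trivial since $x_m\le\tfrac12$). Applying this at $\sigma\omega$, together with $S_{n-1}^\sigma(\sigma\omega)\ge S_n^\sigma(\omega)-1$, proves \eqref{eq:TB_m}.

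For \eqref{eq:TB}, I would apply \eqref{eq:TB_m} along the backward orbit: $u_{\sigma^{-j}\omega}(n+j)\le C\,S_{n+j}^\sigma(\sigma^{-j}\omega)^{-1/\gamma}$, and a direct count gives $S_{n+j}^\sigma(\sigma^{-j}\omega)=S_n^\sigma(\omega)+S_j^{\sigma^{-1}}(\sigma^{-1}\omega)$. When $n\ge N_\ve(\omega)$, \eqref{eq:large_n} gives $S_n^\sigma(\omega)\ge b_0(1-\ve)n$, and for $j>N_\ve(\omega)$ also $S_j^{\sigma^{-1}}(\sigma^{-1}\omega)\ge c_0 j$ for some $c_0>0$; splitting $\sum_{j\ge 0}u_{\sigma^{-j}\omega}(n+j)$ at $j=N_\ve(\omega)$ bounds it by $C(N_\ve(\omega)+1)\,n^{-1/\gamma}+C\int_{N_\ve(\omega)}^\infty(n+t)^{-1/\gamma}\,dt\le C(N_\ve(\omega)+1)\,n^{-1/\gamma}+C\,n^{1-1/\gamma}$, using $1/\gamma>1$. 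A comparison of exponents then shows each of these two terms is at most $C(N_\ve(\omega)+1)^{1\vee(1/\gamma-1)}\,n^{1-1/\gamma}$; and for $n<N_\ve(\omega)$ the asserted inequality is trivial because its right‑hand side is then bounded below by the constant $C$ (which we may take $\ge 1$).

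I expect the density step in the first paragraph to be the main obstacle: a priori $\int_0^x h_\omega\,dm$ is only controlled by $x^{1-\beta}$ with the \emph{worst} exponent $\beta=\esssup\beta(\cdot)$, which is too weak precisely in the regime of interest $\gamma<\beta$. The role of the telescoping decomposition, combined with the \emph{two‑sided} definition of $N_\ve(\omega)$ (involving both $\sigma$ and $\sigma^{-1}$), is exactly to trade this bad exponent for the exponent $\gamma$ coming from the good times in the forward \emph{and} backward environment; keeping track of the index bookkeeping in the two‑sided counting of good times, and of the borderline small‑$n$ and small‑$g$ cases, is the part that requires care.
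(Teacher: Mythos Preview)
Your proof is correct and arrives at exactly the same two key ingredients as the paper: the reduction
\[
\mu_\omega(\tau_\omega\ge n)\le C\sum_{j\ge 0}u_{\sigma^{-j}\omega}(n+j),
\]
and the estimate $u_\omega(n)\le C\,S_n^\sigma(\omega)^{-1/\gamma}$, after which both arguments split the sum at $j=N_\ve(\omega)$ in the same way. The only substantive difference is in how the reduction is obtained. The paper works abstractly: it uses that $\mu_{\sigma^{-1}\omega}$ is \emph{regular} (so the density of $(T_{\sigma^{-1}\omega})_*[\mu_{\sigma^{-1}\omega}|_{\{\tau_{\sigma^{-1}\omega}=1\}}]$ with respect to $\tilde m$ is uniformly bounded), splits according to $\{\tau_{\sigma^{-1}\omega}=1\}$ versus $\{\tau_{\sigma^{-1}\omega}>1\}$, and iterates. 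You instead work explicitly with the preimage geometry and the cone membership $h_\omega\in\cC_*$: bounding $h_\omega$ on $Y$ and telescoping $(0,x_n(\omega))$ along backward left branches. These are two packagings of the same mechanism, and your version is slightly more self-contained. Likewise, the paper cites \cite[Proposition~3.1]{KL2} for \eqref{eq:TB_m}, whereas you supply the direct recursion argument $b_{j+1}^{-\gamma}\le b_j^{-\gamma}-c_\gamma$ along the good times; this is exactly what that citation unpacks to. Your worry about the ``bad'' exponent $\beta$ in the cone estimate is well placed but, as you note yourself, harmless: you only use $\int_0^x h_\omega\,dm\le a x^{1-\beta}$ to kill the telescoping remainder, not for the main bound.
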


\begin{proof} We prove \eqref{eq:TB}. Observe that
	\begin{align*}
		&\mu_\omega( \tau_\omega \ge n ) =  ( T_{ \sigma^{-1} \omega } )_* \mu_{ \sigma^{-1} \omega } ( \tau_\omega \ge n ) \\
		&=  ( T_{ \sigma^{-1} \omega } )_* [  \mu_{ \sigma^{-1} \omega } |_{  \tau_{ \sigma^{-1} \omega } = 1  }    ] ( \tau_\omega \ge n ) 
		+  ( T_{ \sigma^{-1} \omega } )_* [  \mu_{ \sigma^{-1} \omega } |_{  \tau_{ \sigma^{-1} \omega } > 1  }    ] ( \tau_\omega \ge n ) =: I + II.
	\end{align*}
	By regularity of $\mu_\omega$,  for $\bP$-a.e. $\omega \in \Omega$,
	$$
	\frac{  d (  T_\omega )_* [  \mu_{  \omega } |_{  \{ \tau_\omega = 1 \}  }  ]  }{
		d \tilde{m}
	} \le C,
	$$
	which yields
	$$
	I \le C \tilde{m}( \tau_\omega \ge n ).
	$$
	Moreover,
	\begin{align*}
		II = \mu_{ \sigma^{-1} \omega } (  \{   \tau_{ \sigma^{-1} \omega } > 1   \}  \cap T^{-1}_{ \sigma^{-1} \omega }  \{ \tau_\omega \ge n \}  ) 
		\le \mu_{ \sigma^{-1} \omega }  ( \tau_{ \sigma^{-1} \omega } \ge   n + 1 ).
	\end{align*}
	Thus, we obtain 
	$$
	\mu_\omega( \tau_\omega \ge n )  \le C \tilde{m}( \tau_\omega \ge n ) 
	+ \mu_{ \sigma^{-1} \omega }  ( \tau_{ \sigma^{-1} \omega } \ge   n + 1 ),
	$$
	and, by iteration,
	$$
	\mu_\omega( \tau_\omega \ge n ) \le C \sum_{i=0}^\infty \tilde{m} (  \tau_{  \sigma^{-i} \omega   } \ge n + i  ),
	$$
	for $\bP$-a.e. $\omega \in \Omega$.
	
	If $n \le N_\ve(\omega) + 4/b_0$, we have the trivial estimate
	$$
	\mu_\omega( \tau_\omega \ge n ) \le 1 \le C ( 1 +  N_\ve(\omega) )^{  1 / \gamma - 1 }  n^{- 1/ \gamma + 1}.
	$$
	Then suppose that $n > N_\ve(\omega) + 4/b_0$. Since $\ve < 1/2$, it follows from 
	\eqref{eq:large_n} that $n^{-1} S_n^{ \delta  } (\omega) \ge b_0/2$ for 
	$\delta \in \{ \sigma, \sigma^{-1} \}$.
	
	%\textbf{HERE}
	
	Arguing as in the proof of \cite[Proposition 3.1]{KL2}, we see that 
	\begin{align}\label{eq:tb_aux}
		\tilde{m} (  \tau_{  \sigma^{-i} \omega   } \ge n + i  ) \le C S^\sigma_{n+i}( \sigma^{- i } \omega )^{ - 1 / \gamma }.
	\end{align}
	Note that
	\begin{align*}
		\mu_\omega( \tau_\omega \ge n ) &\le C \sum_{i=0}^\infty (  S_i^{-\sigma}(\omega) + S_n^{\sigma}(\omega)  
		-1 )^{- 1 / \gamma } \le C \sum_{i=0}^\infty (  S_i^{-\sigma}(\omega) +  b_0n / 4   )^{- 1 / \gamma } \\
		&\le C N_\ve( \omega ) n^{- 1 / \gamma } +  C \sum_{i= N_\ve(\omega) + 1 }^\infty (  S_i^{-\sigma}(\omega) +  b_0n / 4   )^{- 1 / \gamma } \\
		&\le C N_\ve( \omega ) n^{- 1 / \gamma } +  C \sum_{i=0 }^\infty (  n + i  )^{- 1 / \gamma } 
		\le C ( N_\ve( \omega ) + 1  ) n^{ 1 - 1 / \gamma },
	\end{align*}
	which completes the proof of \eqref{eq:TB}. Estimate \eqref{eq:TB_m} is easy to deduce 
	from \eqref{eq:tb_aux}.
\end{proof}

\subsection*{Step 2: decomposition of regular measures}

We will deduce Theorem \ref{thm:quenched_ml} from the following result, which is a random
counterpart to \cite[Theorem 2.6]{KL2}.

\begin{theorem}\label{thm:decdec} There exists 
	$\Omega' \subset \Omega$ with $\bP(\Omega') = 1$ 
	such that the following holds for any $\omega \in \Omega'$. 
	Let $\nu$ be a regular 
	probability
	measure on $M$ with tail bound $r$ w.r.t. 
	$\omega$. Then, there exists a decomposition
	$$
	\nu = \sum_{n=1}^\infty \alpha_{n, \omega} \nu_{n, \omega}
	$$
	where $\nu_{n, \omega}$ are probability measures such that $( T_\omega^n )_* \nu_{n, \omega} = \tilde{m}$
	for each $n \ge 1$, and $\alpha_{n, \omega} \in [0,1]$ are numbers such that $\sum_{n \ge 1} \alpha_{n, \omega} = 1$. The sequence $(\alpha_{n, \omega})_{n \ge 1}$
	is fully determined by $K_1$, $K_2$, the RDS, and the tail bound $r$.
	In particular, $(\alpha_{n, \omega})_{n \ge 1}$ does not depend on $\nu$
	in any other way. Moreover, there exists $\tilde{\ve} \in (0, 1/2)$ depending only on the RDS 
	and $K_1, K_2$ such that the following holds for any $\ve \in (0, \tilde{\ve}]$: If there exist $\kappa \ge 1$ and 
	$0 < \eta \le 1/\gamma$ such that for
	every $n \ge 1$, 
	\begin{align}\label{eq:r_estim}
		r(n) \le C_r ( N_\ve(\omega) + 1 )^{ \kappa	 }  n^{ - \eta },
	\end{align}
	then, for
	every $n \ge 1$, 
	$$
	\sum_{j \ge n} \alpha_{j, \omega} \le C C_r ( N_\ve(\omega) + 1)^{  \kappa \vee \eta } n^{ - \eta }.
	$$
	The constant $C$ depends only on the RDS, $K_1, K_2$, 
	and $\eta$.
\end{theorem}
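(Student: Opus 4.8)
The plan is to construct the decomposition by an iterated peeling procedure, following \cite[Theorem 2.6]{KL2} but carrying along the random quantity $N_\ve(\omega)$. Fix $K_1,K_2$ as in Proposition~\ref{prop:k1k2} (so that $2K_1\le K_2$), and note the elementary fact that a measure $\mu$ on $Y$ with $|\mu|_{\LL}\le K_1$ has density $\rho$ (w.r.t.\ $\tilde m$) satisfying $\inf_Y\rho\ge \lambda_0\,\mu(Y)$ with $\lambda_0:=e^{-K_1/2}$, since $Y$ has length $1/2$ and the mean of $\rho$ equals $\mu(Y)$. Given a regular probability measure $\nu$ on $M$ with tail bound $r$ w.r.t.\ $\omega$ (with $r(n)\to0$, so $\{\tau_\omega=\infty\}$ is $\nu$-null), decompose $\nu=\sum_{\ell\ge1}\nu|_{\{\tau_\omega=\ell\}}$ and push each piece forward by $T_\omega^\ell$ to a measure $\mu_\ell$ on $Y$ of mass $\nu(\{\tau_\omega=\ell\})$; by regularity of $\nu$ one has $|\mu_\ell|_{\LL}\le K_1$, and Proposition~\ref{prop:regular}(a) then makes $\mu_\ell$ regular w.r.t.\ $\sigma^\ell\omega$. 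Peel off $\tfrac12\lambda_0\mu_\ell(Y)\,\tilde m$: the remainder $\mu_\ell'$ is nonnegative, has mass $(1-\tfrac12\lambda_0)\mu_\ell(Y)$, and a short computation from $|\mu_\ell|_{\LL}\le K_1$ and $\inf\rho\ge\lambda_0\mu_\ell(Y)$ gives $|\mu_\ell'|_{\LL}\le 2K_1\le K_2$, so $\mu_\ell'$ is again regular w.r.t.\ $\sigma^\ell\omega$. Recursing on each $\mu_\ell'$ as an initial measure for the cocycle over $\sigma^\ell\omega$ produces, at absolute times $\ell+\ell'+\cdots$, further peeled multiples of $\tilde m$; normalizing the corresponding $T_\omega$--preimages yields the $\nu_{n,\omega}$ with $(T_\omega^n)_*\nu_{n,\omega}=\tilde m$. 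Since the total unpeeled mass after $k$ rounds is $(1-\tfrac12\lambda_0)^k\to0$, the resulting numbers sum to $1$. To make the $\alpha_{n,\omega}$ independent of $\nu$, one does not peel the $\nu$-dependent amounts directly but commits in advance to a peeling schedule whose cumulative value by time $n$ is $1-E_\omega(n)$, where $E_\omega(n)$ is the universal upper bound for the unpeeled mass established below; this depends only on $\omega$, $K_1$, $K_2$ and $r$, and is admissible for every regular $\nu$ with tail $\le r$.

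For the tail estimate, write $\lambda:=\tfrac12\lambda_0$ and, for $x\in M$, let $R_1(x)<R_2(x)<\cdots$ be its successive return times to $Y$ under the cocycle over $\omega$ and $N(m)(x)=\#\{i\ge1:R_i(x)\le m\}$. By construction, a bit of $\nu$ at $x$ survives all peels at its returns $\le m$ with weight $(1-\lambda)^{N(m)(x)}$, whence
\[
\sum_{j\ge n}\alpha_{j,\omega}\le\int_M(1-\lambda)^{N(n-1)}\,d\nu\le\sum_{k\ge0}(1-\lambda)^k\,\nu\bigl(\{R_{k+1}>n-1\}\bigr),
\]
using $\nu(N(n-1)\le k)=\nu(R_{k+1}>n-1)$. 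It remains to bound $\nu(R_{k+1}>m)$. Split the $k+1$ excursions: $\{R_{k+1}>m\}\subset\bigcup_{i=1}^{k+1}\{\tau_i>m/(k+1)\}$, with $\tau_1=\tau_\omega$. The first term is $\le r(m/(k+1))\le C_r(k+1)^{\eta}(N_\ve(\omega)+1)^{\kappa}m^{-\eta}$. For $i\ge2$, writing $\nu(R_{i-1}=t)$ for the mass completing $i-1$ returns at time $t$ (regular on $Y$ w.r.t.\ $\sigma^t\omega$ with $\LL$-norm $\le K_1$), Proposition~\ref{prop:regular}(a) and \eqref{eq:TB_m} give $\nu(\tau_i>\ell)\le C\sum_t\nu(R_{i-1}=t)\,S_\ell^{\sigma}(\sigma^t\omega)^{-1/\gamma}$, where $S_\ell^{\sigma}(\sigma^t\omega)=S_{t+\ell}^{\sigma}(\omega)-S_t^{\sigma}(\omega)$. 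One then feeds these bounds back into the display; the geometric factor $(1-\lambda)^k$ renders $\sum_k(1-\lambda)^k(k+1)^{O(1)}$ finite, so it suffices to show the contributions are each $\le C\,C_r(k+1)^{O(1)}(N_\ve(\omega)+1)^{\kappa\vee\eta}m^{-\eta}$, giving the claimed bound.

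The main obstacle is exactly this last quantitative step: controlling the dependence on $N_\ve$ through the iteration, which has no counterpart in the deterministic setting of \cite{KL2}. A crude use of \eqref{eq:TB_m} at $\sigma^t\omega$ would introduce a factor $(N_\ve(\sigma^t\omega)+1)^{1/\gamma}$, and under the hypotheses of the theorem (only ergodicity, no mixing rate) $t\mapsto N_\ve(\sigma^t\omega)$ need not grow polynomially, so these cannot be summed. The resolution is to never invoke $N_\ve$ at a shift: from $S_\ell^{\sigma}(\sigma^t\omega)=S_{t+\ell}^{\sigma}(\omega)-S_t^{\sigma}(\omega)\ge b_0(1-\ve)(t+\ell)-t$, valid once $t+\ell\ge N_\ve(\omega)$ by \eqref{eq:large_n}, one gets $S_\ell^{\sigma}(\sigma^t\omega)\ge c\,\ell$ as soon as $\ell\ge c_1 t$; in the complementary range $\ell<c_1 t$ one uses $u_{\sigma^t\omega}\le1$ together with $\sum_{t>\ell/c_1}\nu(R_{i-1}=t)=\nu(R_{i-1}>\ell/c_1)$ and the inductive bound on $\nu(R_{i-1}>\cdot)$, while the regime $t+\ell<N_\ve(\omega)$ is absorbed into $m^{-\eta}$ via $\ell<N_\ve(\omega)$. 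Since $\eta\le 1/\gamma$, the excess decay from the $-1/\gamma$ exponents always dominates, so the accumulated power of $N_\ve(\omega)$ collapses to $\kappa\vee\eta$ with only polynomial-in-$k$ loss, exactly as in the proof of \eqref{eq:TB}. Carrying out this bookkeeping uniformly in $k$ and $i$ is the technical heart; the construction itself, the preimage normalization, and the measurability of $\omega\mapsto(\alpha_{n,\omega},\nu_{n,\omega})$ on the full-measure set $\Omega'$ where the earlier propositions hold are routine.
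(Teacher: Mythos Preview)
Your construction of the decomposition is essentially the same as the paper's: both follow the peeling scheme of \cite[Theorem~2.6]{KL2}, and the paper encodes it via random variables $X_1,X_2,\dots$ (excursion lengths) and an independent geometric $\xi$ with parameter $\theta$, setting $\alpha_{n,\omega}=P(S=n)$ where $S=X_1+\cdots+X_\xi$. You also correctly identify the central new difficulty (one must never invoke $N_\ve$ at a shift $\sigma^t\omega$) and the right remedy (express everything through $S_{t+\ell}^\sigma(\omega)-S_t^\sigma(\omega)$ and use \eqref{eq:large_n} at the fixed $\omega$).

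The gap is in your tail estimate. You combine the union bound $\{R_{k+1}>m\}\subset\bigcup_{i\le k+1}\{\tau_i>m/(k+1)\}$ with a recursive bound on each $\nu(\tau_i>\ell)$ that itself calls $\nu(R_{i-1}>\ell/c_1)$, and then claim ``polynomial-in-$k$ loss''. Tracking the constants, if $D_j:=\sup_m m^\eta\nu(R_j>m)$ then your scheme yields, up to harmless additive terms,
\[
D_{k+1}\;\lesssim\;(k+1)^{\eta}c_1^{\eta}\sum_{j\le k}D_j,
\]
and this recursion blows up faster than any fixed exponential in $k$; the geometric weight $(1-\lambda)^k$, with $\lambda=\tfrac12 e^{-K_1/2}$ \emph{fixed} by the RDS, cannot absorb it. The polynomial-in-$k$ claim would be true for genuinely i.i.d.\ excursions with a uniform tail, but here the tail of $\tau_i$ depends on $R_{i-1}$ through $u_{\sigma^t\omega}$, and it is precisely this dependence that makes the double recursion explode.

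The paper avoids this by running a \emph{single} Nagaev-type recursion directly on $R_j:=\sup_n n^\eta P(S_j\ge n)$: it splits $P(S_{j+1}\ge n)=\sum_\ell H_\ell$ into three ranges $\ell\le \tilde N_\ve(\omega)$, $\tilde N_\ve(\omega)<\ell\le\lfloor bn\rfloor$, and $\ell>\lfloor bn\rfloor$, with $\tilde N_\ve=A+N_\ve$ and $b=\tfrac12(1+(1-\theta)^{1/(2\eta)})$. The large-$\ell$ block gives $R_j\,b^{-\eta}$; the middle block, after summation by parts and using \emph{both} inequalities in \eqref{eq:large_n} to get $S_n^\sigma(\omega)-S_\ell^\sigma(\omega)\ge(b_0-\ve)n-(b_0+\ve)\ell$, contributes $L(A)R_j$ with $L(A)\to0$; and the small-$\ell$ block contributes the $N_\ve$ term. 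Choosing $A$ large so that $L(A)+b^{-\eta}\le(1-\theta)^{-1/2}$ yields $R_{j+1}\le C(N_\ve+1)^{1\vee\eta}+R_j(1-\theta)^{-1/2}$, hence $R_j\lesssim (1-\theta)^{-j/2}$, and the final sum $\sum_j(1-\theta)^{j-1}\theta\,R_j$ converges. The freedom to pick the split point $b$ (and the buffer $A$) in terms of $\theta,\eta$ is what makes the recursion constant controllable; your fixed $c_1$ and equal split $m/(k+1)$ give no such freedom. Reworking your tail argument along these lines---one recursion step per excursion, split at $bn$ with $b$ close to $1$, and using the two-sided control of $S^\sigma(\omega)$---closes the gap.
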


\begin{remark}
	Suppose that $\{ \nu_\omega \}$ and 
	$\{ \nu_\omega' \}$ are two families of probability measures 
	on $M$
	such that, for $\bP$-a.e. $\omega \in \Omega$, 
	$\nu_\omega$ and 
	$\nu'_\omega$ are
	regular 
	with the same tail bound $r_\omega$
	satisfying \eqref{eq:r_estim}.
	Then, as an immediate consequence of 
	Theorem \ref{thm:decdec}, we obtain 
	\begin{align}\label{eq:tv_dec}
		|(T_\omega^n)_*\nu_\omega -(T_\omega^n)_*\nu_\omega'|
		\le 2\sum_{k \ge n} \alpha_{k,\omega}
		\le C C_r ( N_\ve(\omega) + 1)^{  \kappa \vee \eta } n^{ - \eta },
	\end{align}
	for every $n \ge 1$ and for $\bP$-a.e. $\omega \in \Omega$.
	Here, $|\cdot|$ denotes the total variation 
	norm of signed measures.
\end{remark}

\subsubsection*{Proof of Theorem \ref{thm:decdec}}

Let $\Omega_0 \subset \Omega$ be such that 
$\bP(\Omega_0) = 1$ and $\beta( \sigma^k \omega ) \le \beta$
for all $k \in \mathbb{Z}$ whenever $\omega \in \Omega_0$.  

We start with some definitions. Given $\omega \in \Omega_0$, we define 
	\begin{align}\label{eq:h_n}
		u_{\omega, n}(\ell) = C_u ( u_\omega(\ell + n) +  u_{ \sigma \omega }(\ell + n - 1) 
		+ \ldots + u_{ \sigma^n \omega }(\ell)  ),
	\end{align}
	where $C_u = 2 e^{K_2}$. Set 
	$
	\hat{r} (n) = \min \{ 1, r(1), \ldots, r(n))\}$, 
	and
	similarly define  $\hat{u}_{\omega, n}(\ell)$.
	
	Let $X_1, X_2, \ldots$ be random variables 
	on a probability space 
	$(E, \mathcal{E}, P)$
	with values in $\{0, 1,\ldots\}$, such that
	for all $\ell \geq 0$,
	\begin{equation}
		\label{eq:PXj}
		\begin{aligned}
			P(X_1 \geq \ell)
			&= r(\ell),
			\\
			P(X_{j+1} \geq \ell \mid X_1, \ldots, X_j)
			& = u_{ \sigma^p \omega ,X_j}(\ell)
			\quad \text{for $j \ge 1$ with  $p = X_1 + \ldots + X_{j-1}$.}
		\end{aligned}
	\end{equation}
	Let $\xi$ be a geometrically distributed random variable 
	on $(E, \mathcal E, P)$ 
	with values in 
	$\{1, 2, \ldots\}$
	and parameter $\theta \in (0,1)$,
	independent of $\{X_j\}$. 
	Define
	\[
	S
	= X_1 + \ldots + X_{\xi}
	.
	\]

	By repeating the argument in the proof of \cite[Theorem 2.6]{KL2} up to 
	\cite[Lemma 4.5]{KL2}, we obtain the following result:
	
	\begin{lemma}\label{lem:decomp} For any $\omega \in \Omega_0$ and 
		for any sufficiently small $\theta$ (depending only on the RDS and 
		$K_1, K_2$), 	
		there exists a decomposition
		$$
		\nu = \sum_{n=1}^\infty P(S = n) \nu_{n, \omega},
		$$
		where $\nu_{n, \omega}$ are probability measures such that 
		$( T_\omega^n )_* \nu_{n, \omega} = \tilde{m}$.
	\end{lemma}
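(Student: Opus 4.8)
The plan is to adapt the renewal-type construction from the proof of \cite[Theorem 2.6]{KL2} (up to \cite[Lemma 4.5]{KL2}), carrying along the shift of the base point $\sigma^k\omega$ at each return to $Y$. Fix $\omega \in \Omega_0$, so that the contraction $\Lambda$ of the return maps $F_{\omega,a}$, the cone constants, and the constants $K_1 < K_2$ of Propositions~\ref{prop:k1k2} and~\ref{prop:regular} are uniform in the base point. The backbone of the argument is a \emph{peeling step} on $Y$: if $\lambda$ is a nonnegative measure on $Y$ with $|\lambda|_{\LL}\le K_1$ and total mass $c$, then, since $Y$ has diameter $1/2$, its density satisfies $d\lambda/d\tilde m \ge e^{-K_1/2}c$ pointwise, so for $\theta>0$ small (depending only on $K_1$, hence on the RDS and $K_1,K_2$) one may write $\lambda = \theta c\,\tilde m + (1-\theta)c\,\lambda'$ with $\lambda'$ a probability measure on $Y$; a short estimate on difference quotients of $\log(d\lambda'/d\tilde m)$ shows that $|\lambda'|_{\LL}$ is bounded by a fixed multiple of $K_1$, hence $\le K_2$ once the constants are chosen suitably (which is possible, after replacing $\tau_\omega$ by a uniformly bounded number of successive returns so that $\Lambda$ is large enough — compare Proposition~\ref{prop:k1k2}). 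By Proposition~\ref{prop:regular}, $\tilde m$ is regular w.r.t.\ $\sigma^\ell\omega$, and any probability measure on $Y$ with $|\cdot|_{\LL}\le K_2$ is regular w.r.t.\ $\sigma^\ell\omega$ with tail bound $Cu_{\sigma^\ell\omega}$, so the remainder $\lambda'$ is again eligible for the peeling step one step down the orbit.

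I would then unfold this recursion along the first-return decomposition. Starting from $\nu = \sum_{\ell\ge 1}\nu|_{\{\tau_\omega=\ell\}}$ and pushing each piece forward by $T_\omega^\ell$ yields, by regularity of $\nu$, measures on $Y$ with $|\cdot|_{\LL}\le K_1$, to which the peeling step applies; pulling back identifies a proportion of $\nu$ with preimages of copies of $\tilde m$ under $T_\omega^\ell$, while the remaining mass is a convex combination of measures whose $T_\omega^\ell$-pushforwards are regular w.r.t.\ $\sigma^\ell\omega$, and the construction is applied again — now with a return time whose tail is controlled via $u_{\sigma^\ell\omega}$, and allowing for the extra steps incurred when a regular measure on $Y$ must be transported further before re-entering $Y$, which is exactly what the auxiliary tail functions $u_{\omega,n}$ of~\eqref{eq:h_n} (built from the $u_{\sigma^k\omega}$ of~\eqref{eq:h_omega}) are designed to absorb. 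The number of completed stages is randomised by the geometric variable $\xi$ of parameter $\theta$: on $\{\xi=k\}$ the relevant part of $\nu$ has, after $T_\omega^{S}$ with $S = X_1+\cdots+X_k$, become a copy of $\tilde m$, so pulling back gives probability measures $\nu_{n,\omega}$ with $(T_\omega^n)_*\nu_{n,\omega}=\tilde m$, and the mass attached to $\{S=n\}$ is precisely the weight $P(S=n)$ of the renewal process~\eqref{eq:PXj}. The decomposition $\nu=\sum_n P(S=n)\,\nu_{n,\omega}$ with $\sum_n P(S=n)=1$ then follows because, with $\theta>0$, $S$ is a.s.\ finite; and since $\nu$ enters the process only through its tail bound $r$ (via the law of $X_1$, the later $X_j$ depending only on the RDS through $u_{\omega,n}$), the weights $P(S=n)$ depend on $\nu$ only through $r$, as required.

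I expect the main obstacle to be not any single estimate but the simultaneous calibration of constants across the shifted base points: one must fix $K_1<K_2$, $\Lambda$, and $\theta$ once and for all so that (i) the peeling step keeps the remainder inside $\{|\cdot|_{\LL}\le K_2\}$, (ii) Propositions~\ref{prop:k1k2} and~\ref{prop:regular} apply uniformly in $\omega\in\Omega_0$, and (iii) the dynamically generated return-time tails are dominated by the renewal tails $r$ and $u_{\omega,n}$ entering~\eqref{eq:PXj} — all while these choices depend only on the RDS and not on $\omega$ or $\nu$. A secondary technical point is the coupling that matches the first-return distribution of $\nu$ (which is only stochastically dominated by the law of $X_1$, via $\nu(\tau_\omega\ge n)\le r(n)$) with the renewal process; reconciling the two is precisely the reason the overshoot mechanism encoded in $u_{\omega,n}$ is needed. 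All of this is carried out in detail in \cite{KL2}, so that the adaptation to the random setting should amount to systematically tracking the base point $\sigma^k\omega$.
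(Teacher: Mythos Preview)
Your proposal is correct and takes essentially the same approach as the paper: the paper's own ``proof'' of this lemma is nothing more than the sentence ``By repeating the argument in the proof of \cite[Theorem 2.6]{KL2} up to \cite[Lemma 4.5]{KL2}, we obtain the following result,'' and your sketch fleshes out precisely that reference, correctly identifying the base-point shift $\sigma^k\omega$ at each return as the only adaptation needed and noting the role of the auxiliary tails $u_{\omega,n}$ in absorbing the overshoot.
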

	
	It remains to estimate the tail probabilities $P( S \ge n )$ of $S$ in Lemma 
	\ref{lem:decomp}:
	
	\begin{lemma}\label{lem:tails}	
		There exists $\tilde{\ve} \in (0,1/2)$ depending only on the RDS and $K_1, K_2, \theta$, 
		such that the following holds for any $\ve \in (0, \tilde{\ve}]$.
		For $\bP$-a.e. $\omega \in \Omega$ and every $n \ge 1$,
		\begin{align}\label{eq:return_tails}
			P( S \ge n ) \le C C_r ( N_\ve(\omega) + 1)^{  \kappa \vee \eta } n^{ - \eta }.
		\end{align}
		The constant $C$ depends only on the RDS and $K_1, K_2, \theta$, $\eta$.
		
	\end{lemma}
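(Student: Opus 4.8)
The plan is to follow the scheme developed in \cite[\S4]{KL2} for the deterministic intermittent maps, adapting it to the random environment. Recall from Lemma~\ref{lem:decomp} that $\alpha_{n,\omega}=P(S=n)$, so that $\sum_{j\ge n}\alpha_{j,\omega}=P(S\ge n)$ and it suffices to prove \eqref{eq:return_tails}. Since $\xi$ is geometric and independent of $(X_j)_j$,
\[
P(S\ge n)=\sum_{k\ge 1}\theta(1-\theta)^{k-1}\,P(X_1+\dots+X_k\ge n),
\]
and $\{X_1+\dots+X_k\ge n\}\subset\bigcup_{j\le k}\{X_j\ge n/k\}$ gives $P(X_1+\dots+X_k\ge n)\le\sum_{j\le k}P(X_j\ge n/k)$. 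Because the geometric weights $(1-\theta)^{k-1}$ decay exponentially, \eqref{eq:return_tails} will follow once we establish a per-block bound of the form $P(X_j\ge\ell)\le C(j)\,C_r(N_\ve(\omega)+1)^{\kappa\vee\eta}\,\ell^{-\eta}$ with $C(j)$ growing at most polynomially in $j$ (one then sums $\sum_k(1-\theta)^{k-1}C(k)k^{1+\eta}<\infty$).

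For $j=1$ this is exactly the hypothesis \eqref{eq:r_estim}, since $P(X_1\ge\ell)=r(\ell)$. For $j\ge 2$, \eqref{eq:PXj} expresses the conditional tail of $X_j$ through $u_{\sigma^{P_{j-2}}\omega,\,X_{j-1}}$, where $P_{j-2}=X_1+\dots+X_{j-2}$; by the definition \eqref{eq:h_n} this is a sum of at most $X_{j-1}+1$ terms $u_{\sigma^q\omega}(m)$ with $q\in[P_{j-2},P_{j-1}]$ and $q+m=\ell+P_{j-1}$ fixed. The first step is to estimate each such term by \eqref{eq:TB_m} and then to use the additivity
\[
S_m^\sigma(\sigma^q\omega)=S_{q+m}^\sigma(\omega)-S_q^\sigma(\omega),
\]
so that only the single count $S^\sigma_\bullet(\omega)$, hence only $N_\ve(\omega)$, enters. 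Invoking \eqref{eq:large_n} with $\ve\le\tilde\ve$ small, the counts in question are comparable to their Birkhoff means $b_0\cdot(\text{time})$ as soon as the relevant time exceeds $N_\ve(\omega)$; summing the convergent series $\sum_{m\ge\ell}m^{-1/\gamma}$ (note $1/\gamma>1$) then yields a bound decaying like $\ell^{-(1/\gamma-1)}$, which dominates the target $\ell^{-\eta}$ in the application, where $\eta=1/\gamma-1$, while for general $\eta\le 1/\gamma$ one combines it with the faster decay of the $X_1$-tail. The ranges where this Birkhoff comparison fails --- namely $q\le N_\ve(\omega)$, or $\ell$ small relative to the accumulated position $P_{j-1}$ --- are treated with the trivial bound $u\le 1$; this is where powers of $(N_\ve(\omega)+1)$ up to $1\vee(1/\gamma-1)$, and factors polynomial in $P_{j-1}$ (hence, after taking expectations, polynomial in $j$), are produced.

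The main obstacle is precisely this last point: the conditional law of $X_j$ depends on the environment at the random position $P_{j-2}$, and $N_\ve(\sigma^{P_{j-2}}\omega)$ is almost surely finite but not uniformly bounded in $j$. The remedy is two-fold. First, the additivity of $S^\sigma$ reduces everything to the single quantity $N_\ve(\omega)$, which is what permits the final bound to depend on $N_\ve(\omega)$ alone. Second, choosing $\tilde\ve$ small makes the multiplicative defect $|S^\sigma_m(\omega)-b_0m|\le\ve b_0m$ negligible, so that $S^\sigma_m(\sigma^q\omega)\gtrsim_{\ve} m$ whenever $m$ is not too small relative to $q$, confining the use of the crude bound $u\le 1$ to regimes whose probability is itself small (of order a power of $\ell^{-\eta}$ times a polynomial in $j$). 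Carrying out this bookkeeping so that the per-block constants $C(j)$ indeed grow only polynomially --- which closes the induction on $j$ and lets the exponential weighting of $\xi$ absorb them --- together with tracking that the two mechanisms ($r$ contributing $(N_\ve+1)^\kappa$ and the crude $u$-bounds contributing $(N_\ve+1)^{1\vee(1/\gamma-1)}$) combine to the exponent $\kappa\vee\eta$, is the delicate part of the argument; it is carried out essentially as in \cite{KL2}, with the driving-process estimate \eqref{eq:large_n} replacing the (trivial) deterministic control.
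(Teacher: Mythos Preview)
There is a genuine gap in your approach. The union bound $\{X_1+\dots+X_k\ge n\}\subset\bigcup_{j\le k}\{X_j\ge n/k\}$ is too crude: it converts the target $n^{-\eta}$ into $(n/k)^{-\eta}=k^\eta n^{-\eta}$, and this extra factor $k^\eta$ prevents the induction from closing. Concretely, your outline for $j\ge 2$ gives at best $P(X_j\ge\ell)\le C(N_\ve+1)\ell^{-\eta}+P(S_{j-1}>c\ell/\ve)$, because the Birkhoff comparison $S_m^\sigma(\sigma^q\omega)\gtrsim m$ fails precisely on the event $\{S_{j-1}>c\ell/\ve\}$. The second term is controlled only via $R_{j-1}(\ve/c)^\eta\ell^{-\eta}$, where $R_j:=\sup_n n^\eta P(S_j\ge n)$. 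Feeding this back through the union bound yields a recursion of the form $R_k\le Ck^{1+\eta}(N_\ve+1)+(\ve/c)^\eta k^\eta\sum_{j<k}R_j$, and no polynomial or exponential ansatz $R_k\le B\rho^k$ closes this because of the $k^\eta$ factor in front of the sum. Your parenthetical claim that factors polynomial in $P_{j-1}$ become ``after taking expectations, polynomial in $j$'' is precisely where the argument breaks: the bad-regime probability is itself controlled only through $R_{j-1}$, so the circularity is real and the growth in $j$ is not polynomial.

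The paper avoids this loss by working with the \emph{exact} decomposition $P(S_{j+1}\ge n)=\sum_{\ell}P(X_{j+1}\ge n-\ell\mid S_j=\ell)\,P(S_j=\ell)$ rather than a union bound, splitting the sum in $\ell$ at thresholds $\tilde N_\ve(\omega)=A+N_\ve(\omega)$ and $\lfloor bn\rfloor$ for a carefully chosen $b\in(1/2,1)$. The tail piece $P(S_j\ge bn)$ contributes $b^{-\eta}R_j n^{-\eta}$; the convolution piece over $\tilde N_\ve<\ell\le bn$ contributes $L(A)R_j n^{-\eta}$ with $L(A)\to0$. Choosing $b$ so that $b^{-\eta}<(1-\theta)^{-1/2}$ and then $A$ large yields the clean recursion $R_{j+1}\le C(N_\ve+1)^{1\vee\eta}+R_j(1-\theta)^{-1/2}$, hence $R_j\le CC_r(N_\ve+1)^{\kappa\vee\eta}(1-\theta)^{-j/2}$. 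This (exponential) growth is then absorbed by the geometric weights $P(\xi=j)=\theta(1-\theta)^{j-1}$ since $\sum_j(1-\theta)^{j/2-1}<\infty$. The multiplier $(1-\theta)^{-1/2}$ strictly smaller than $(1-\theta)^{-1}$ is the crux, and it cannot be reached through the union bound route.
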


	\begin{proof}[Proof of Lemma \ref{lem:tails}] We follow closely the argument 
		in the proof of \cite[Proposition 4.6]{KL2}. We suppose, without loss of generality, that $u_\omega$ is nonincreasing, so that $\hat{u}_{\omega, n}(\ell) \leq h_{\omega, n}(\ell)$.
		
		For $j \ge 1$ and $n \ge 1$, 
		we denote $S_j = X_1 + \cdots + X_j$ and decompose 
		\begin{equation}\label{eq:H}
			P(S_{j+1} \geq n) =  \sum_{ \ell = 1}^{n+1} H_\ell,
		\end{equation}
		where
		\begin{align*}
			H_\ell &= P(X_{j+1} \geq n - \ell \mid S_j = \ell) P(S_j = \ell), \quad 1 \le \ell \le n, \\
			H_{n+1} &=  P(S_j > n).
		\end{align*}
		%Let us also define $H_\ell = 0$ for $\ell > n + 1$.
		
		From the definition of $u_{\omega, n}$ in \eqref{eq:h_n} it is clear that 
		\[
		u_{  \sigma^{ S_{j-1}  } \omega,   X_j} \leq u_{\omega, S_j}.
		\]
		Using this inequality together with~\eqref{eq:PXj},
		\begin{align}\label{eq:tail_probas}
			P(X_{j + 1} \geq n - \ell \mid S_j = \ell)
			\leq u_{\omega, \ell} (n - \ell)
			= C_u \sum_{i = 0}^{\ell } u_{ \sigma^i \omega }( n - i),
		\end{align}
		for $1 \le \ell \le n$.
		
		Let
		 $\tilde{N}_\ve(\omega) = A + N_\ve(\omega)$ where $A > 0$ is a large integer whose value 
		is specified later. Define $b \in (1/2, 1)$ by
		$$
		b = \frac{ 1 + (1 - \theta)^{ 1 / 2 \eta } }{2}.
		$$
		Further, define
		$$
		\tilde{\ve} = \frac{b_0 ( 1  - b)}{8},
		$$
		and
		$$
		R_j = \sup_{ n \ge 1 }  \left (n^{\eta} P(S_j \geq n) \right ).
		$$
		
		We assume that $\ve \in (0, \tilde\ve]$, and that
		\begin{align}\label{eq:ass_n}
			n \ge \max \biggl\{   \frac{ \tilde{N}_\ve(\omega) }{b}, \frac{4 ( \tilde{N}_\ve(\omega) + 1 ) }{b_0}   \biggr\}.
		\end{align}
		For such $n$, we decompose 
		$$
		P(S_{j+1} \ge n) = \sum_{1 \le \ell \le  \tilde{N}_\ve(\omega) } H_\ell 
		+ \sum_{\tilde{N}_\ve(\omega) < \ell \le   \lfloor  b n \rfloor   } H_\ell + \sum_{ \lfloor  b n \rfloor < \ell \le n + 1} H_\ell =: E_1 + E_2 + E_3,
		$$
		and estimate each term separately. In the rest of the proof we denote by $C$ various constants that depend only on the 
		RDS, $b$, $A$, $\theta$, and $\eta$.
		
		By \eqref{eq:tail_probas},
		\begin{align*} 
			E_1 \le \sup_{ \ell \le \tilde{N}_\ve(\omega) } P(X_{j + 1} \geq n - \ell \mid S_j = \ell)
			\le C_u \sum_{i = 0}^{ \tilde{N}_\ve(\omega) } u_{ \sigma^i \omega }( n - i).
		\end{align*}
		Thus, it follows from \eqref{eq:TB_m} that,
		for $\bP$-a.e. $\omega \in \Omega$,
		\begin{align*}
			E_1 &\le C_u C \sum_{i = 0}^{ \tilde{N}_\ve(\omega) }  ( S_{n-i}^\sigma( \sigma^i \omega ) )^{- \eta_0} 
			= C_u C \sum_{i = 0}^{ \tilde{N}_\ve(\omega) }  (  S_n^\sigma(\omega) - S_i^\sigma(\omega)  )^{- \eta_0} \\
			&\le C_u C  \tilde{N}_\ve(\omega) (  S_n^\sigma(\omega) - \tilde{N}_\ve(\omega)  )^{-\eta_0},
		\end{align*}
        where $\eta_0 = 1/\gamma$.
		Recall from \eqref{eq:large_n} that $S_n^\sigma(\omega) \ge n b_0 / 2$ for $n \ge N_\ve(\omega)$.
		For $n$ satisfying \eqref{eq:ass_n}, this yields
		\begin{align}\label{eq:estim_e1}
			E_1 \le C_u C  \tilde{N}_\ve(\omega) (  n b_0 / 2 - \tilde{N}_\ve(\omega)  )^{-\eta_0} 
			\le C  \tilde{N}_\ve(\omega)   n^{-\eta_0},
		\end{align}
		and, in particular,
		\begin{align}\label{eq:small_index}
			\sum_{i = 0}^{ \tilde{N}_\ve(\omega) } u_{ \sigma^i \omega }( n - i) 
			\le C  \tilde{N}_\ve(\omega)   n^{-\eta_0},
		\end{align}
		for $\bP$-a.e. $\omega \in \Omega$.
	
		For $E_3$, we have 
		\begin{align}\label{eq:estim_e3}
			E_3 \le P(S_j \ge bn ) \le R_j b^{-\eta} n^{-\eta}.
		\end{align}
		
		For $E_2$, we first estimate 
		\begin{align*}
			\bar{E}_2 &:= \sum_{\tilde{N}_\ve(\omega) < \ell \le   \lfloor  b n \rfloor   } u_{ \sigma^\ell \omega } (n - \ell) 
			P(S_j \ge \ell ) 
			\le C R_j  \sum_{\tilde{N}_\ve(\omega) < \ell \le   \lfloor  b n \rfloor   }  
			(  S_n^\sigma(\omega) - S_\ell^\sigma(\omega)  )^{- \eta_0}
			\ell^{  - \eta } \\
			&\le C R_j  \sum_{\tilde{N}_\ve(\omega) < \ell \le   \lfloor n/2 \rfloor  }  
			(  S_n^\sigma(\omega) - S_\ell^\sigma(\omega)  )^{- \eta_0}
			\ell^{  - \eta } \\
			&+ C R_j  \sum_{\lfloor n/2 \rfloor < \ell \le   \lfloor  b n \rfloor  }  
			(  S_n^\sigma(\omega) - S_\ell^\sigma(\omega)  )^{- \eta_0}
			\ell^{  - \eta } =: \bar{E}_{2,1} +  \bar{E}_{2,2},
		\end{align*}
		for $\bP$-a.e. $\omega \in \Omega$, 
		where \eqref{eq:TB_m} was used in the first inequality.
		
		By \eqref{eq:large_n},
		$$
		S_n^\sigma(\omega) - S_\ell^\sigma(\omega)  \ge (b_0 - \ve)n -  (b_0 + \ve)\ell
		$$
		holds for $\ell \ge \tilde{N}_\ve(\omega)$. Therefore, for $n$ satisfying \eqref{eq:ass_n} we have
		\begin{align*}
			\bar{E}_{2,1} &\le 
			C R_j 
			\sum_{\tilde{N}_\ve(\omega) < \ell \le   \lfloor n/2 \rfloor  }  
			( b_0(n - \ell) - 2 \ve n  )^{- \eta_0} 
			\ell^{-\eta}
			\le L_1(A) R_j   n^{-\eta},
		\end{align*}
		for $\bP$-a.e. $\omega \in \Omega$, 
		where $\lim_{A \to \infty} L_1(A) = 0$, and 
		$\ve \le \tilde{\ve}$ was used to obtain the last inequality.
		On the other hand, 
		\begin{align*}
			\bar{E}_{2,2} &\le C R_j  n^{  - \eta }  \sum_{\lfloor n/2 \rfloor < \ell \le   \lfloor  b n \rfloor  }  
			(   b_0(n - \ell) - 2 \ve n    )^{- \eta_0} \\
			&\le C R_j  n^{ - \eta + 1 - \eta_0  }  ( b_0( 1 - b ) - 2 \ve )^{-\eta_0} \le L_2(A) R_j  n^{  - \eta },
		\end{align*}
		for $\bP$-a.e. $\omega \in \Omega$,
		where $\lim_{A \to \infty} L_2(A) = 0$.
		We conclude that, for $\bP$-a.e. $\omega \in \Omega$,
		\begin{align*}
			\bar{E}_2 \le L(A) R_j  n^{  - \eta },
		\end{align*}
		with $\lim_{A \to \infty} L(A) = 0$.
        Moreover, the function $L$
        depends only on the RDS and $K_1, K_2, \theta$, $\eta$.
		Using \eqref{eq:tail_probas} and summation by parts, it follows that
		\begin{align}\label{eq:estim_e2}
		\begin{split}
			E_2 &\le C_h \sum_{\tilde{N}_\ve(\omega) < \ell \le   \lfloor  b n \rfloor   }  
			P(S_j \ge \ell )  \sum_{i = 0}^{\ell } u_{ \sigma^i \omega }( n - i) \\
			&\le \bar{E}_2 + C_u \sum_{0 \le i \le \tilde{N}_\ve(\omega) } u_{ \sigma^i \omega } (n - i) P(S_j \ge \tilde{N}_\ve(\omega)   )\\
			&\le L(A) R_j  n^{  - \eta }
			+ C  \tilde{N}_\ve(\omega)   n^{-\eta_0},
		\end{split}
		\end{align}
		for $\bP$-a.e. $\omega \in \Omega$,
		where \eqref{eq:small_index} was used in the last inequality. 
		
		Gathering \eqref{eq:estim_e1}, \eqref{eq:estim_e2}, and \eqref{eq:estim_e3}, we conclude that, 
		for $\bP$-a.e. $\omega \in \Omega$, 
		\begin{align*}
			P(S_{j+1} \ge n) &\le  C  \tilde{N}_\ve(\omega)   n^{-\eta_0} + R_j ( L(A) + b^{-\eta} ) n^{- \eta},
		\end{align*}
		for any $n$ satisfying \eqref{eq:large_n}. We choose $A$ sufficiently 
		large such that 
		$$
		L(A) + b^{-\eta} \le (1-\theta)^{-1/2}.
		$$
		Note that this is possible by definition of $b$. It follows that,
		for $\bP$-a.e. $\omega \in \Omega$, 
		$$
		P(S_{j+1} \ge n) \le  C  \tilde{N}_\ve(\omega)   n^{-\eta_0} + R_j (1 - \theta)^{-1/2} n^{-\eta}
		$$
		for $n$ satisfying \eqref{eq:ass_n}. Consequently, for all $n \ge 1$,
		\begin{align*}
		P(S_{j+1} \ge n) &\le  C \tilde{N}_\ve(\omega)   n^{-\eta_0} + R_j (1 - \theta)^{-1/2} n^{-\eta}
		+ N_\ve(\omega)^\eta n^{-\eta} \\
		&\le C(  N_\ve(\omega) + 1  )^{ 1 \vee \eta  }
		n^{-\eta} + R_j (1 - \theta)^{-1/2} n^{-\eta},
		\end{align*}
		i.e.,
		$$
		R_{j+1} \le  (N_\ve(\omega) + 1)^{ 1 \vee \eta }  + R_j (1 - \theta)^{-1/2}.
		$$
		Recall (see~\eqref{eq:r_estim}) that 
		$$
		P(S_1 \ge n) = r(n) \le C_r  ( N_\ve(\omega) + 1)^{ \kappa }     n^{ - \eta }.
		$$
		By induction, for $j \ge 1$, 
		and	for $\bP$-a.e. $\omega \in \Omega$, 
		$$
		R_j \le C C_r ( N_\ve(\omega) + 1)^{  \kappa \vee \eta }  (1 - \theta)^{-j/2}.
		$$
		This extends to
		$$
		P(S_j \ge n) \le C C_r 
		( N_\ve(\omega) + 1)^{  \kappa \vee \eta }
		 (1 - \theta)^{-j/2}  n^{ - \eta }.
		$$
		
        Since $\xi$  and $S_j$ are independent and $P(\xi = j) = (1 - \theta)^{j - 1} \theta $, we obtain
		\begin{align*}
			P(S \geq n)
			&= \sum_{j \geq 1} P(S_j \geq n) P(\xi = j) \\
			&\le C C_r ( N_\ve(\omega) + 1)^{  \kappa \vee \eta }  n^{ - \eta }
			\sum_{j \ge 1}  (1 - \theta)^{ j/2 - 1 } \theta  
			\le C C_r ( N_\ve(\omega) + 1)^{  \kappa \vee \eta } n^{ - \eta },
		\end{align*}
		for every $n \ge 1$ and for 
		$\bP$-a.e. $\omega \in \Omega$,
		as wanted.
	\end{proof}
	
	Theorem~\ref{thm:decdec} follows by combining
	Lemmas~\ref{lem:decomp} and~\ref{lem:tails}.

\subsection*{Step 3: final step}

Write
$$
\tilde{g}_\ell  =  g_\ell  + A_\ell, 
\quad \ell = 1,2,
$$
where $A_\ell = 2\Vert  g_\ell  \Vert_\Lip + 1$.
Then $\tilde{g}_\ell \ge \Vert g_\ell  \Vert_\Lip + 1 \ge \tfrac12 A_\ell$. We decompose 
\begin{align*}
	\cL_{ \sigma^s \omega }^{i-s} (  h_{ \sigma^s \omega }
	g_1  ) g_2
	&= \cL_{ \sigma^s \omega }^{i-s} ( h_{\sigma^s \omega} 
	\tilde{g}_1
	)
	\tilde{g}_2
	+ \cL_{ \sigma^s \omega }^{i-s} ( h_{\sigma^s \omega} 
	\tilde{g}_1
	)
	A_2 + A_1 h_{ \sigma^i \omega } \tilde{g}_2
	+ A_1 A_2 h_{ \sigma^i \omega } \\
	&=: \psi^{(1)}_{ \sigma^i \omega } + \psi^{(2)}_{ \sigma^i \omega } + 
	\psi^{(3)}_{ \sigma^i \omega } + \psi^{(4)}_{ \sigma^i \omega }.
\end{align*}
Set 
$$
d \nu_{\omega, k} =  \frac{\psi^{(k)}_\omega }{ m( \psi^{(k)}_\omega ) } \, dm, \quad 1 \le k \le 4.
$$

Note that
\begin{align*}
	|  \tilde{g}_\ell ( F_{a, \omega}^{-1}  )   |_{\LL} 
	\le \frac{ \Vert  \tilde{g}_\ell  \Vert_{\Lip} }{
	\inf_{[0,1]} \tilde{g}_\ell 
	} \le 3, \quad \forall a \in \cP_\omega, \: \forall \omega \in \Omega.
\end{align*}
Since $\mu_\omega$ is regular w.r.t. $\omega$ for $\bP$-a.e. $\omega \in \Omega$, 
applying Proposition~\ref{prop:regular}-(iii),
we deduce that 
$\nu_{\omega, k}$ is regular 
w.r.t. 
$\omega$ for $\bP$-a.e. $\omega \in \Omega$ and $1 \le k \le 4$,
provided that the constants $K_1, K_2$ in the definition of regularity are chosen sufficiently large, depending only on the RDS.

Next, since
$$
| \psi_{\omega}^{(1)}(x) | \le C A_1 A_2 \cL_{ \sigma^{s-i} \omega }^{ i - s }( h_{ \sigma^{s-i} \omega } )(x)
= C A_1 A_2 h_\omega(x)
$$
and 
$$
| \psi_{\omega}^{(1)}(x) |  \ge  \tfrac14 A_1 A_2 \inf_{x} h_\omega(x) \ge C' A_1 A_2 > 0,
$$
we have 
\begin{align*}
	\nu_{\omega, 1}(  \tau_\omega \ge n  ) \le C \mu_\omega(  \tau_\omega \ge n  ).
\end{align*}
Similarly, for $2 \le k \le 4$, 
\begin{align*}
	\nu_{\omega, k}(  \tau_\omega \ge n  ) \le C \mu_\omega(  \tau_\omega \ge n  ).
\end{align*}
Consequently, the measures $\nu_{\omega, k}$ for $1 \le k \le 4$ have the same tail bound
$$
r_\omega(n) \le  C  ( N_\ve(\omega) + 1)^{ 1 \vee ( 1 / \gamma - 1 ) }   n^{ 1 - 1 / \gamma }
$$
w.r.t. $\omega$ for $\bP$-.a.e $\omega \in \Omega$.

Denote by $g_{\omega, k}$ the density of $\nu_{\omega, k}$. Then, 
\begin{align*}
	&\Vert \cL_{ \sigma^i \omega }^{ j - i }(  \cL_{ \sigma^s \omega }^{i-s} (  h_{ \sigma^s \omega } g_1 ) g_2 )  
	- m(   \cL_{ \sigma^s \omega }^{i-s} (  h_{ \sigma^s \omega } g_1 ) g_2    ) h_{ \sigma^j \omega }
	\Vert_{ L^1(m) }\\
	&\le \sum_{k=1}^4 \Vert \cL_{\sigma^i}^{j - i}  \psi^{(k)}_{\sigma^i \omega } - m( \psi^{(k)}_{\sigma^i \omega }) h_{ \sigma^j \omega }  \Vert_{L^1(m)}
	\le \sum_{k=1}^4 
	m( \psi^{(k)}_{ \sigma^i \omega  } )
	\Vert \cL_{\sigma^i}^{j - i} (  g_{ \sigma^i \omega, k } - h_{ \sigma^i \omega }  ) \Vert_{L^1(m)} \\
	&\le  \sum_{k=1}^4 
	m( \psi^{(k)}_{ \sigma^i \omega  } )
	| ( T_{ \sigma^i \omega }^{j-i} )_*( \nu_{\sigma^i\omega, k}  ) 
	- ( T_{ \sigma^i \omega }^{j-i} )_*( \mu_{\sigma^i\omega}  )   |,
\end{align*}
where
$|\cdot|$ denotes the total variation of signed measures.
Since $|m( \psi^{(k)}_{ \sigma^i \omega  } )| \le CA_1 A_2$,
an application of \eqref{eq:tv_dec} now yields the upper bound
\begin{align*}
	&\Vert 
	\cL_{ \sigma^i \omega }^{ j - i }(  \cL_{ \sigma^s \omega }^{i-s} (  h_{ \sigma^s \omega } g_1 ) g_2 )  
	- m(   \cL_{ \sigma^s \omega }^{i-s} (  h_{ \sigma^s \omega } g_1 ) g_2    ) h_{ \sigma^j \omega }
	\Vert_{ L^1(m) } \\
	&\le C A_1 A_2
	( N_\ve(\omega) + 1)^{ 1 \vee ( 1 / \gamma - 1 ) } 
	 (j - i)^{-1/\gamma + 1},
\end{align*}
for $\bP$-a.e. $\omega \in \Omega$, provided that $\ve \in (0, \tilde{\ve}]$. This completes
the proof of Theorem \ref{thm:quenched_ml}.

\section*{Acknowledgements}
This paper has been funded by European Union – NextGenerationEU-Statistical properties of random dynamical systems and other contributions to mathematical analysis and probability theory (D. Dragi\v cevi\' c). J. Lepp\"anen was supported by JSPS via the project LEADER. He thanks Alexey Korepanov for valuable correspondence.

\end{document}